\newtheorem{theorem}[subsection]{Theorem}
\newtheorem{lemma}[subsection]{Lemma}
\newtheorem{corollary}[subsection]{Corollary}
\newtheorem{prop}[subsection]{Proposition}
\theoremstyle{definition}
\newtheorem{definition}[subsubsection]{Definition}
\newtheorem{remark}[subsection]{Remark}
\newcommand{\graph}{\mathrm{graph}}
\newcommand{\dom}{\mathrm{dom}}
\newcommand{\spt}{\mathrm{spt}}
\newcommand{\haus}{\mathcal{H}}
\newcommand{\proj}{\mathrm{proj}}
\newcommand{\cT}{\mathcal{T}}
\newcommand{\mcfM}{\mathcal{M}}
\newcommand{\dilD}{\mathcal{D}}
\newcommand{\refl}{\mathrm{refl}}
\newcommand{\sff}{\mathrm{II}}
\newcommand{\cBT}{\mathcal{BT}}
\newcommand{\eps}{\epsilon}
\newcommand{\spU}{\mathcal{U}}
\newcommand{\admis}{\mathcal{C}}
\newcommand{\R}{\mathbb{R}}
\title{The free-boundary Brakke flow}
\author{Nick Edelen}
\address{Department of Mathematics\\Massachusetts Institute of Technology\\Cambridge, MA, 02139-4307 }
\email{nedelen@mit.edu}
\begin{document}

\maketitle

\begin{abstract}
We develop the notion of Brakke flow with free-boundary in a barrier surface.  Unlike the classical free-boundary mean curvature flow, the free-boundary Brakke flow must ``pop'' upon tangential contact with the barrier.  We prove a compactness theorem for free-boundary Brakke flows, define a Gaussian monotonicity formula valid at all points, and use this to adapt the local regularity theorem of White \cite{white:local-reg} to the free-boundary setting.  We use Ilmanen's elliptic regularization procedure \cite{ilmanen:elliptic-reg} to prove existence of free-boundary Brakke flows.
\end{abstract}


\section{Introduction}

A surface $\Sigma$ has geometric free-boundary in a barrier hypersurface $S$ if $\partial\Sigma \subset S$, and $\Sigma$ meets $S$ orthogonally.  This is a physically and mathematically natural boundary condition to impose on geometric problems-with-boundary, and has garnered increasing interest over the past several years.

In the 90's Stahl \cite{stahl:regularity} proved long-time existence of the smooth, compact, free-boundary mean curvature flow of hypersurfaces, in the sense that curvature blow-up must occur at a finite-time singularity.  Some progress has been made analysing mean-convex singularities through smooth blow-ups: via a particular monotonicity formula Buckland \cite{buckland} proved type-I singularities are modeled on generalized cylinders (with free-boundary in a plane), and recently the author \cite{me:convexity} proved type-II singularities can be realized by translating solitons, via the Huisken-Sinestrari estimates.  Many others have considered smooth free-boundary curvature flows, including \cite{koeller}, \cite{stahl:singularity}, \cite{wheeler}, \cite{lambert:minkowski}, \cite{marquardt:thesis}.

A notion of free-boundary Brakke flow was originally written down by Mizuno-Tonegawa \cite{tonegawa:free-allen-cahn}, who proved existence of codimension-one free-boundary Brakke flows in convex barriers via the Allen-Cahn functional.  Recently Kagaya \cite{kagaya} extended their anaylsis to more general barriers.  The related notion of level-set flow with free-boundary has been extensively studied by several authors (see \cite{sato}, \cite{giga-sato}, \cite{katso-koss-reitich}, \cite{volkmann:thesis}).

In this paper we develop further the theory of Brakke flows with free-boundary.  Our first result is a compactness theorem for this class of flows (Theorem \ref{thm:brakke-compactness}).  The following simple example illustrates why one must modify Brakke's definition of flow to have compactness in the free-boundary setting: let $V_k$ be the regular $k$-gon inscribed in a circle.  Each $V_k$ is a stationary $1$-varifold, with free-boundary in the circle, and therefore by any reasonable definition is a Brakke flow with free-boundary.  However, taking $k \to \infty$, the $V_k \to S^1$ as varifolds, which is not anymore a Brakke flow!

However we are saved because ``in the directions tangential to $S^1$'' we are still a Brakke flow (in fact as free-boundary Brakke flows, the static $V_k$ converge to the empty flow; compare this to how sudden mass drop can occur in regular Brakke flows, even as limits of smooth flows).  As observed by Mizuno-Tonegawa, Brakke's original definition must be relaxed.   An interesting consequence is that this definition requires the flow to ``pop'' or ``break-up'' upon tangential contact with the barrier, which is in contrast to the smooth free-boundary mean curvature flow, for which the barrier is ``invisible.''

We then prove a monotonicity formula valid at all points (Theorem \ref{thm:monotonicity}), which gives an upper-semi-continuous notion of Gaussian density.  The formula given in \cite{buckland} is valid only along the boundary, and even for flat barriers only reduces to the standard Gaussian of the reflected flow when centered on the barrier.  We prove the existence of tangent flows (Theorem \ref{thm:tangent-flows}), which are always self-shrinkers either without boundary, or with free-boundary in a plane.  In the latter case one can reflect to obtain a self-shrinking Brakke flow without boundary.

Using our reflected Gaussian density we adapt the regularity arguments of White \cite{white:local-reg}, to prove an Brakke-type regularity theorem for free-boundary smooth flows, and limits of smooth flows (Theorem \ref{thm:brakke-reg}).  We further adapt Ilmanen's elliptic regularization to prove existence of the free-boundary Brakke flow, and smooth short-time existence from embedded, smooth initial data (Theorem \ref{thm:existence}).

Some of our results can be summarized in the following Theorem.
\begin{theorem}\label{thm:teaser}
Let $\Omega$ be a smooth domain in $\R^N$, with uniform $C^{3,\alpha}$ bounds on $\partial\Omega$, and take $\Sigma$ to be a smooth $n$-surface in $\Omega$ with geometric free-boundary in $\partial\Omega$.  Then there is a Brakke flow $(\mcfM(t))_{t \geq 0}$ in $\Omega$ with free-boundary in $\partial\Omega$, such that $\mcfM(0) = \Sigma$, and $\mcfM$ is smooth for short time.

Moreover, $\mcfM$ has the property that if any tangent flow  at a point $(x, t)$ is a multiplicity-1 plane (with possible free-boundary in a hyperplane), then near $(x, t)$ $\mcfM$ coincides with a smooth free-boundary flow.
\end{theorem}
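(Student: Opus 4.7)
The plan is to derive Theorem \ref{thm:teaser} as a direct combination of the two principal results announced in the introduction, namely the existence Theorem \ref{thm:existence} (via elliptic regularization) and the local regularity Theorem \ref{thm:brakke-reg} (via the reflected Gaussian monotonicity), with the monotonicity formula of Theorem \ref{thm:monotonicity} serving as the bridge.

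First I would invoke Theorem \ref{thm:existence} directly: since $\partial\Omega$ has uniform $C^{3,\alpha}$ bounds and $\Sigma$ is a smooth $n$-surface meeting $\partial\Omega$ orthogonally, the hypotheses are met, producing a Brakke flow $(\mcfM(t))_{t\geq 0}$ with $\mcfM(0) = \Sigma$, free-boundary in $\partial\Omega$, and smooth on some time interval $[0, t_0)$. The key conceptual feature of this construction is that $\mcfM$ arises as a limit of smooth free-boundary flows (the $\epsilon \to 0$ limits of the elliptic regularizations), so $\mcfM$ is of the class to which Theorem \ref{thm:brakke-reg} applies.

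For the ``moreover'' clause, fix $(x,t)$ and suppose some tangent flow there is a multiplicity-$1$ plane $P$ (either an interior plane or a half-plane with free-boundary in a hyperplane). By the definition of the reflected Gaussian density from Theorem \ref{thm:monotonicity}, the density of $P$ at the origin of its spacetime equals $1$, so the density $\Theta(\mcfM, (x,t)) = 1$. The monotonicity formula gives upper-semi-continuity of $\Theta$, hence $\Theta(\mcfM, (y,s)) \leq 1 + \eta$ for $(y,s)$ in a small parabolic neighborhood of $(x,t)$, for any prescribed $\eta > 0$. I would now apply the free-boundary local regularity Theorem \ref{thm:brakke-reg} on this neighborhood: for $\eta$ sufficiently small (depending only on $n$, $N$, and the $C^{3,\alpha}$ geometry of $\partial\Omega$), the conclusion is that $\mcfM$ agrees with a smooth free-boundary mean curvature flow there.

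The main conceptual obstacle is really already absorbed into the invoked theorems: one must verify that the flow produced by elliptic regularization is honestly a limit of smooth free-boundary flows in the sense required by the regularity theorem, and that the reflected density machinery correctly assigns the value $1$ to a boundary half-plane tangent flow (so that ``interior'' and ``boundary'' cases of the hypothesis are handled uniformly). Both are properties baked into the definitions and constructions underlying Theorems \ref{thm:existence}, \ref{thm:monotonicity}, and \ref{thm:brakke-reg}, so once those are established Theorem \ref{thm:teaser} is essentially a packaging statement. No additional calculation beyond extracting the neighborhood from upper-semi-continuity of $\Theta$ should be needed.
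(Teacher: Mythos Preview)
Your overall architecture is right---the theorem is a packaging of Theorem \ref{thm:existence} with the local regularity theory---but there is a genuine gap in the ``moreover'' step. Theorem \ref{thm:brakke-reg} is an \emph{a priori} estimate for \emph{smooth} free-boundary mean curvature flows: its hypothesis requires $\mcfM$ to already be a smooth flow, and its conclusion is a bound on the $C^{2,\alpha}$ regularity scale. You cannot apply it directly to the limit flow $\mcfM$ at a point $(x,t)$ whose regularity is the very thing in question.

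The paper handles this by introducing the class $\mathcal D(n,N)$ (Theorem \ref{thm:brakke-closure}): flows for which a multiplicity-$1$ planar tangent flow forces regularity. The key content is that $\mathcal D$ is \emph{closed} under free-boundary Brakke convergence, and the proof of closure is exactly the argument you sketched---upper-semi-continuity of the reflected density pushes the bound $\Theta_{refl}<1+\eta$ onto the \emph{approximating sequence} $\mcfM_i$ in a neighborhood, the $\mcfM_i$ are then regular there (since $\mcfM_i\in\mathcal D$), Theorem \ref{thm:brakke-reg} gives uniform $r_{2,\alpha}$ bounds on the $\mcfM_i$, and one passes to the limit. Theorem \ref{thm:existence} part B) then asserts $\mcfM\in\mathcal D(n,N)$, which is literally the ``moreover'' clause. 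A secondary correction: the elliptic-regularization approximants $\mu_\epsilon$ are translating solitons built from $I_\epsilon$-minimizers and are not a priori smooth everywhere; the paper places them in $\mathcal D$ via Allard and Gr\"uter--Jost regularity at the varifold level, not by smoothness.
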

The free-boundary Brakke flows constructed in Theorem \ref{thm:teaser} always stay to one side of the barrier, and have an associated current structure which rules out sudden vanishing (see Theorem \ref{thm:existence}).  If some interior point of $\mcfM$ hits the barrier, it must in infinitesimal time become a free-boundary component, i.e. ``pop''.  This is qualitatively different behavior from the classical immersed free-boundary flow; in the classical flow internal points of contact will continue flowing past the barrier.

I express my deepest gratitude to my advisors Simon Brendle and Brian White, and to my friend Otis Chodosh, for their  guidance and support.  I thank Masashi Mizuno for bringing several references to my attention.  This work borrows heavily from a series of lectures given by White at Stanford in Spring 2015, and Ilmanen's book on Elliptic Regularization \cite{ilmanen:elliptic-reg}.


\section{Preliminaries}

We live in $\R^N$.  Given a sequence of open sets $U_i$, $U$ in $\R^N$, we say $U_i \to U$ \emph{as open sets} if
\[
W \subset\subset U \iff W \subset \subset U_i \quad \forall i >> 1 ,
\]
for every precompact open $W$.  If the $U_i$, $U$ have smooth boundary, we say $U_i \to U$ in $C^{k,\alpha}$ if $U_i \to U$ as open sets, the $\partial U_i \to \partial U$ locally graphically in $C^{k,\alpha}$.

Let $S \subset \R^N$ be an embedded, oriented hypersurface, with orienting normal $\nu_S$.  We will always write $d(x)$ for the distance function to $S$.  An  $n$-surface $M$ \emph{meets $S$ orthogonally} if $\partial M \subset S$, and the outward conormal of $\partial M  \subset M$ coincides with $\nu_S$.  We will often say an $M$ meeting $S$ orthogonally has \emph{classical free-boundary} in $S$.

A family of immersions $F_t : M^n \times [0, T)$ is a \emph{classical mean curvature flow with free-boundary} in $S$ if
\begin{align*}
&\partial_t F_t(p) = H_t(p)  \text{, for all $p \in M$ and $t > 0$, and} \\
&F_t(M) \text{ meets $S$ orthogonally, for $t \geq 0$},
\end{align*}
here $H_t(p)$ being the mean curvature vector of $F_t(M)$ at $p$.

\subsection{Varifolds}

We will work extensively with integral varifolds.  Recall an $n$-varifold $V$ in $U \subset \R^N$ is a Radon measure on the Grassmanian $U \times Gr(n, N)$.  Here $Gr(n, N)$ is the space of unoriented $n$-planes in $\R^N$.  We denote the mass measure of $V$ by $\mu_V$, i.e. so $\mu_V = \pi_\sharp V$, with $\pi : U \times Gr(n, N) \to U$ being the projection.

We say $V$ is \emph{integral} if there is a collection of $C^1$ manifolds $N_i$, and subsets $S_i \subset N_i$, so that
\[
V(\phi(x, L)) = \sum_i \int_{S_i} \phi(x, T_x N_i) d\haus^n.
\]
The tangent plane $T_x N_i$ is well-defined $\haus^n$-a.e. in $S_i$.  Equivalently, $V$ is integral if
\[
\mu_V = \haus^n \llcorner \theta \llcorner M
\]
for some countably $n$-rectifiable set $M$, and some locally-$(\haus^n\llcorner M)$-integrable function $\theta$, taking non-negative integer values.  An integral varifold is uniquely determined by its mass measure.

Given a $C^1$ vector field in $\R^N$, and an $n$-plane $L$, write
\[
div_L(X) = tr_L(DX) = \sum_i <D_{e_i} X, e_i>,
\]
for any orthonormal basis $(e_i)$ of $L$.  For an integral varifold $V$, we write $div_V$ to mean $div_{T_x V}$, and $tr_V = tr_{T_x V}$, wherever the tangent plane exists.

The \emph{first variation} for mass of an integral varifold has the expression
\[
\delta V(X) = \int div_V(X) d\mu_V , \quad X \in C^1_c(U, \R^N) .
\]
We say $\delta V$ is locally finite if
\[
|\delta V(X)| \leq C_W |X|_{C^0} \quad \forall X \in C^1_c(W, \R^N) , W \subset\subset U
\]

If $\delta V$ is locally finite then we can define the total variation (Radon) measure
\[
||\delta V||(W) = \sup\{ \delta V(Y) : Y \in C^1_c(W, \R^N), |Y| \leq 1 \} \quad \forall W \subset\subset U ,
\]
and therefore differentiate to obtain
\[
\delta V(X)= -\int H \cdot X d\mu_V + \int X \cdot \nu_V d\sigma_V .
\]
Here $H_V = -\frac{d||\delta V||}{d \mu_V}$ is the \emph{generalized mean curvature vector}, $\sigma_V = ||\delta V||_{sing}$ is the \emph{generalized boundary measure}, and $\nu_V$ is the \emph{generalized outwards conormal}.

\subsection{Regularity scales and reflection}

Let $u$ be a function into $\R^{N-n}$, defined on some subset of $U \subset \R^n$.  We define the following (semi-)norms
\begin{align*}
|u|_{0, U} &= \sup_{x \in U \cap \dom(u)} |u(x)| \\
[u]_{\alpha, U} &= \sup_{x \neq y \in U \cap \dom(u)} \frac{|u(x) - u(y)|}{|x - y|^\alpha} \\
[u]_{k, \alpha, U} &= [D^k u]_{\alpha, U} , \\
|u|_{k,\alpha, U} &= [u]_{k,\alpha, U} + \sum_{i \leq k} |D^i u|_{0, U} .
\end{align*}
(This is not the scale-invariant $C^{k,\alpha}$-norm.)  Note we do not require $u$ to be defined on all of $U$.

Let $S$ be a smooth $n$-surface in $\R^N$, and $x \in S$.  We define the \emph{$C^k$ regularity scale} of $S$ at $x$, written as $r_k(S, x)$, to be the largest radius $r$ so that, after a suitable rotation, the translated and dilated surface
\[
(S - x)/r \cap (B^n_1 \times B^{N-n}_1)
\]
coincides with the graph of some $u : U \subset B^n_1 \to \R^{N-n}$, satisfying $|u|_{k, B^n_1} \leq 1$.  The \emph{$C^{k,\alpha}$ regularity scale} is defined in precisely the same manner, and is denoted by $r_{k,\alpha}(S, x)$.

Notice that $r_{2}$ bounds the inscribed radius from below, so a bound on $r_2$ is stronger than a bound on the second fundamental form.

Equivalently, the $C^k$ (or $C^{k,\alpha}$) regularity scale is the largest $r$ for which
\begin{equation}\label{eqn:reg-scale}
S \cap ((B^N_r(x) \cap L^n) \times (B^N_r(x) \cap L^\perp)) \subset \graph_L(u),
\end{equation}
where $L^n$ is some affine $n$-plane, and $u : U \subset L \to L^\perp$ is a function satisfying
\[
\sum_{i = 0}^k r^{i-1} |D^i u|_{0, U}  \quad \left( + r^{k+\alpha-1} [D^k u]_{\alpha, U} \,\, \text{ if $C^{k,\alpha}$} \right) \leq 1.
\]
If we take $L = T_x S$, then $u(0) = Du(0) = 0$, and we see that the regularity scale is always positive.  The best scale may not be achieved by $L = T_x S$ though.

We define the \emph{global regularity scale} $r_{k,\alpha}(S) = \inf_{x \in S} r_{k,\alpha}(S, x)$.

Throughout the rest of this paper we fix in notation $S^{N-1} \subset \R^N$ as our smooth, embedded, oriented \emph{barrier hypersurface}.  Write $\nu_S$ for the orienting unit normal, and $d(x)$ for the distance function to $S$.  Let $\zeta(x)$ be the nearest point projection of $x$ onto $S$.

We define the \emph{reflection across $S$} of a point $x$ to be
\[
\tilde x = 2\zeta(x) - x.
\]
Given a vector $v \in T_x \R^N$, we let $\tilde v \in T_{\tilde x} \R^N$ be the \emph{linear} reflection across $T_{\zeta(x)}S$.  In other words, if $\refl$ denotes the usual affine reflection across $T_{\zeta(x)}S$, then
\[
\tilde v = D(\refl)(v) \in T_{\tilde x} \R^N.
\]
We shall only consider $\zeta$ and $\tilde x$ as defined in sufficiently small neighborhood of $S$, so both are smooth functions of $x$.

Pick a point $y \in S$, and for simplicity assume $y = 0$, and $T_y S = \R^{N-1} \times \{0\}$.  Let $r_{3}(S, y) = \rho$, and take $u : B^{N-1}_\rho(0) \to \R$ to be the graph in \eqref{eqn:reg-scale} realizing $r_{3}$.  We define the \emph{inverse projection map} at $y$ to be
\begin{align}
\Phi(x', s) &= (x', u(x')) + s \nu_S(x', u(x')) \label{eqn:straighten}\\
&= (x', u(x')) + \frac{s(-Du(x'), 1)}{\sqrt{1+|Du(x')|^2}} \nonumber.
\end{align}

One can check directly (since $\Phi(0) = 0$, $D\Phi(0) = Id$) that, for any $r \leq \rho$, on $B^{N-1}_r(0) \times [-r, r]$ we have the estimates
\begin{equation} \label{eqn:reflect-bounds}
|\Phi - Id| \leq c(n) \frac{r^2}{\rho}, \quad |D\Phi - Id| \leq c(n) \frac{r}{\rho}, \quad |D^2 \Phi|_0 \leq \frac{c(n)}{\rho} .
\end{equation}
If we further have $r_{k,\alpha}(S, y) = \rho$, then
\[
\rho^{k+\alpha-1} [D^k \Phi]_{\alpha, B_\rho^{N-1} \times [-\rho, \rho]} \leq c(n, k, \alpha).
\]

We make the following definition out of convenience.  Let $c_0(n) = 10c(n)$, where $c(n)$ as in \eqref{eqn:reflect-bounds}.  For $x \in S$, define the \emph{reflection regularity scale} $r_S(x)$ to be the largest radius $\leq r_3(S, x)$ so that:
\begin{enumerate}
\item[A)] on $B^N_{r_S}(x)$ the inverse $\Phi^{-1}$ centered at $x$ exists, and satisfies
\[
|\Phi^{-1} - Id|(z) \leq c_0 \frac{|z - x|^2}{r_S}, \quad |D\Phi^{-1} - Id|(z) \leq c_0 \frac{|z - x|}{r_S}, \quad |D^2\Phi^{-1}|(z) \leq \frac{c_0}{r_S}; 
\]

\item[B)] on $B^{N-1}_{r_S}(x) \times [-r_S, r_S]$ (having identified $T_x S$ with $\R^{N-1}$), $\Phi$ satisfies
\[
|\Phi - Id|(z) \leq c_0 \frac{|z - x|^2}{r_S}, \quad |D \Phi - Id|(z) \leq c_0\frac{|z - x|}{r_S}, \quad |D^2\Phi|(z) \leq \frac{c_0}{r_S} .
\]
\end{enumerate}

The \emph{global reflection regularity scale} of $S$ is defined to be $r_S = \inf_{x \in S} r_S(x)$.  By construction $r_S$ scales with $S$, and in fact by \eqref{eqn:reflect-bounds} we can take $r_S(x) = \epsilon(n) r_3(S, x)$.

We require with some further estimates on $\Phi$.  We remark that $d(x) \leq |x| + |\tilde x|$, where defined.
\begin{lemma}\label{lem:refl-bounds}
Let $x \in B_{r_S}(S)$, and $v \in T_x \R^N$ be a unit vector.  Write $\refl$ for the affine reflection about $T_{\zeta(x)}S$.  We have
\[
|\tilde y - \refl(y)| \leq c_1(n)\frac{|y - \zeta(x)|^2}{r_S} \text{ on } B_{r_S}(\zeta(x)), \quad |D_v \tilde x - \tilde v| \leq c(n) \frac{d(x)}{r_S}, \quad |D_v D_v \tilde x| \leq \frac{c(n)}{r_S}.
\]
For any $n$-plane $L^n$, at $x$ we have
\[
\left| \mathrm{tr}_L D^2|\tilde x|^2 - 2n \right| \leq c(n) \left(\frac{d(x)}{r_S} + \frac{|\tilde x|}{r_S} \right).
\]
Recall that $d(x)$ is the distance function to $S$.
\end{lemma}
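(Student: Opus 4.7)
The plan is to express everything in the straightened coordinates supplied by the inverse projection map $\Phi$ at $\zeta(x)$, whose $C^2$-closeness to the identity is built into the definition of $r_S$. After translating and rotating so that $\zeta(x) = 0$ and $T_0 S = \R^{N-1} \times \{0\}$, the surface $S$ is locally the graph of a function $u$ with $u(0) = Du(0) = 0$ and $|D^2 u| \leq c(n)/r_S$, and the reflection $y \mapsto \tilde y$ takes the simple form $(y', s) \mapsto (y', -s)$ in $\Phi$ coordinates.

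For the first inequality I would write out both $\tilde y = \Phi(y', -s)$ and the affine reflection $\refl(y)$ explicitly in ambient coordinates, using the formula \eqref{eqn:straighten} for $\Phi$. Since $\tilde\cdot$ and $\refl$ agree identically on $S$ and both have derivative at $\zeta(x)$ equal to the linear reflection about $T_{\zeta(x)}S$, the difference $\tilde y - \refl(y)$ vanishes to first order at the basepoint, so it is $O(|y - \zeta(x)|^2)$ with a constant controlled by $|u|,|Du|,|D^2u|$ on $B_{r_S}$ — each of which is bounded by $c(n)/r_S$ times the appropriate power of $|y - \zeta(x)|$. Up to replacing $|y'|,|s|$ by $|y - \zeta(x)|$ via the $\Phi^{-1}$ bound from the definition of $r_S$, this gives (1).

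For the derivative estimates I would differentiate $\tilde x = 2\zeta(x) - x$ directly. With $x = p + s\nu_S(p)$, $p = \zeta(x)$, and signed distance $s$ (so $|s| = d(x)$), implicit differentiation of $x(t) = p(t) + s(t)\nu_S(p(t))$ gives $D\zeta_x = (I + sA)^{-1} \circ \proj_{T_pS}$, where $A$ is the shape operator at $p$; at $s = 0$ this recovers the linear reflection and yields $D_v\tilde x = \tilde v$ on $S$. Expanding $(I + sA)^{-1} = I - sA + O(s^2|A|^2)$ and using $|A| \leq c/r_S$ together with the splitting $v = v^T + v^\perp$ then gives $|D_v \tilde x - \tilde v| \leq c(n) d(x)/r_S$, proving (2). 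The second-order bound (3) is immediate from $\tilde x = 2\zeta(x) - x$ and the $C^2$-closeness of $\Phi$, which supplies $|D^2 \zeta| \leq c(n)/r_S$.

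For the trace estimate I would expand
\[
D_v D_v |\tilde x|^2 \;=\; 2|D_v \tilde x|^2 + 2\,\tilde x \cdot D_v D_v \tilde x
\]
and sum over an orthonormal basis $(e_i)$ of $L$. By (2) each $|D_{e_i}\tilde x|^2$ differs from $|\tilde e_i|^2 = 1$ (since $\refl$ is a linear isometry) by $O(d(x)/r_S)$, contributing the main term $2n$ plus error $O(d(x)/r_S)$; by (3) the second sum is bounded by $c(n)|\tilde x|/r_S$. The only conceptually nontrivial step is the first estimate — extracting the full $|y-\zeta(x)|^2/r_S$ rate and not merely a linear bound — but once one identifies the correct cancellation between $\tilde\cdot$ and $\refl$ at $\zeta(x)$, the remaining work is routine bookkeeping in the $\Phi$ coordinates.
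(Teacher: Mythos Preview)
Your proposal is correct and close in spirit to the paper's argument: both reduce everything to the $C^2$-closeness of $\Phi$ to the identity encoded in the definition of $r_S$, and your treatment of the trace estimate is identical to the paper's.  The one genuine difference is in how the derivative bounds (2) and (3) are obtained.  The paper works throughout with the single identity
\[
\tilde y \;=\; \Phi\bigl(\refl(\Phi^{-1}(y))\bigr) \;=\; \refl(y) + (\Phi - Id)\bigl(\refl(\Phi^{-1}(y))\bigr) + \refl\bigl((\Phi^{-1} - Id)(y)\bigr),
\]
differentiating this composition and reading off the bounds directly from $|D\Phi - Id|(z) \leq c_0|z - \zeta(x)|/r_S$ (evaluated at $z = \refl(\Phi^{-1}(x))$, whose distance to $\zeta(x)$ equals $d(x)$) and the analogous $\Phi^{-1}$ bound.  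You instead differentiate $\tilde x = 2\zeta(x) - x$ and compute $D\zeta = (I + sA)^{-1}\circ\proj_{T_pS}$ via the shape operator.  The paper's route is slightly more uniform --- one formula handles all three estimates --- while yours makes the geometric source of the $d(x)$ factor (namely the signed distance $s$ multiplying the shape operator) more transparent.  Both are short and neither requires anything the other does not.
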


\begin{proof}
Let $\Phi$ be the inverse projection map at $\zeta(x)$.  For any $y \in B_{r_S}(\zeta(x))$, we have the relation
\begin{align*}
\tilde y &= \Phi(\refl(\Phi^{-1}(y))) \\
&= \refl(y) + (\Phi - Id)(\refl(\Phi^{-1}(y))) + \refl((\Phi^{-1} - Id)(y)) .
\end{align*}

Therefore, recalling that $\Phi(\zeta(x)) = \zeta(x)$, we obtain 
\begin{align*}
|\tilde y - \refl(y)| 
&\leq c(n) ( \sup_{B_{r_S}(\zeta(x))} |D\Phi^{-1}| ) |y - \zeta(x)|^2/r_S + c(n) |y - \zeta(x)|^2/r_S \\
&\leq c(n) |y - \zeta(x)|^2/r_S,
\end{align*}
and
\begin{align*}
|D_v \tilde x - \tilde v|
&= |D_v \tilde x - D(\refl)(v)| \\
&\leq |(D\Phi - Id)|_{\refl(\Phi^{-1}(x))}| |D\Phi^{-1}|_x| + |(D\Phi^{-1} - Id)|_x| \\
&\leq c(n) |x - \zeta(x)|/r_S,
\end{align*}
and
\begin{align*}
|D_v D_v \tilde x| \leq |D^2 \Phi| |D\Phi|^2 + |D\Phi - Id| |D^2\Phi^{-1}| + |D^2\Phi^{-1}| \leq c(n)/r_S .
\end{align*}
The last relation follows directly from the others.  
\end{proof}

\subsection{Spacetime and flows}\label{section:spacetime-flows}

We define \emph{spacetime} to be space
\[
\R^{N,1} = \{ (x, t) : x \in \R^N, t \in \R \} \equiv \R^{N+1},
\]
endowed with the parabolic norm
\[
|X| \equiv |(x, t)| = \max\{|x|, |t|^{1/2} \}.
\]
We typically use capitals to denote points in spacetime.  We define the spacetime ball  centered at $X$ to be
\[
B^{N,1}_R(X) = \{ Y \in \R^{N,1} : |X - Y| < R \}.
\]
In $\R^{N,1}$ time naturally scales like space squared.  We write the parabolic scaling operator as
\[
\dilD_\lambda(x, t) = (\lambda x, \lambda^2 t) .
\]

Given a function $u : U \subset B^{n, 1}(0) \to \R^{N-n}$, the \emph{spacetime graph} of $u$ is the set
\[
\graph(u) = \{ (u(x, t), x, t) : x \in U, t \in \R \} .
\]
We define the parabolic (semi-)norms
\begin{align*}
[u]_{\alpha, U} &= \sup_{X \neq Y \in U \cap \dom(u)} \frac{|u(X) - u(Y)|}{|X - Y|^\alpha}, \\
[u]_{k,\alpha, U} &= \sum_{i + 2j = k} [D^i \partial_t^j u]_{\alpha, U} ,\\
|u|_{k,\alpha, U} &= [u]_{k,\alpha,U} + \sum_{i + 2j \leq k} |D^i \partial_t^j u|_{0, U} .
\end{align*}
Here $|X|$ is the parabolic norm.

Let $\mcfM$ be a $C^\infty$ submanifold (with possible boundary) of $\R^{N+1}$ in the ordinary Euclidean sense, having (Euclidean) dimension $n+1$.  We define the \emph{parabolic $C^{k,\alpha}$-regularity scale} of $\mcfM$ at $X$, written $r_{k,\alpha}(\mcfM, X)$, to be the largest radius $r$ so that, after a suitable rotation in space, the dilated and translated submanifold
\[
\dilD_{1/r}(\mcfM - X) \cap (B^{N-n}_1 \times B^{n,1}_1) = \graph(u)
\]
for some $u : U \subset B^{n,1}_1 \to \R^{N-n}$ having $|u|_{k,\alpha, B^{n,1}_1} \leq 1$.  If no such $r$ exists we set $r_{k,\alpha} = 0$.

However if $X$ is not a critical point for the time function $(x, t) \mapsto t$ restricted to $\mcfM$, then due to the parabolic scaling $r_{k,\alpha}(\mcfM, X)$ will be positive.  In fact $r_{k,\alpha}$ is bounded away from $0$ on compact subsets of $\mcfM$, since if $r_{k,\alpha}(\mcfM, X) > 0$ then one can choose an $0 < r \leq r_{k,\alpha}(\mcfM, X)$ so that $r_{k,\alpha} \geq r$ in some neighborhood of $X$.

We say $\mcfM$ is a \emph{smooth flow with classical free-boundary} in $S$, if for some $T \in (-\infty, \infty]$, 
\begin{enumerate}
\item[A)] $r_{2,\alpha}(\mcfM, X) > 0$ at every $X \in \mcfM$;

\item[B)] $\partial \mcfM \subset (S \times (-\infty, T]) \cup \{ t = T \}$;

\item[C)] each slice
\[
\mcfM(t) = \{ x \in \R^N : (x, t) \in \mcfM \}
\]
meets $S$ orthogonally.
\end{enumerate}
We often refer to $\mcfM$ as the \emph{spacetime track} of the flow.  Given an open set $\spU \subset \R^{N,1}$, we say $\mcfM$ is \emph{proper} in $\spU$ if
\[
\overline{\mcfM} \cap \spU = \mcfM \cap \spU ,
\]
here $\overline{\mcfM}$ denoting the set-theoretic closure of $\mcfM$.

We say $\mcfM$ is a smooth \emph{mean curvature flow}, with classical free-boundary in $S$, if it satisfies conditions A)-C), and additionally
\begin{enumerate}
\item[D)] the time-slices $(M_t)_t \equiv (\mcfM(t))_t$ move by free-boundary mean curvature flow.
\end{enumerate}

If a family of immersions $F_t : M^n \times [0, T) \to \R^N$ defines a classical mean curvature flow, with free-boundary in $S$, then the spacetime track 
\begin{equation}
\mcfM = \{ (x, t) : x \in F_t(M), t \in (0, T) \}
\end{equation}
will be a smooth mean curvature flow in the above definition.

\section{Free-boundary varifolds}

Fix $S$ a smooth, embedded, oriented barrier hypersurface in $\R^N$, with $r_S > 0$.  Write $\nu_S$ for the orienting normal.  We will always write $V$ for an integral $n$-varifold, with $n < N$.

\begin{definition}
Given an open $U \subset \R^N$, write $\cT(S, U)$ for the space of vector fields $X \in C^0_c(U, \R^N)$ which lie tangent to $S$, i.e. $X|_S \subset TS$.  We abbreviate $\cT(S) \equiv \cT(S, \R^N)$.
\end{definition}
\begin{definition}
Given a Borel-measurable vector field $X$, define the (discontinuous) vector fields
\[
S^\perp(X) = 1_{S} (X \cdot \nu_S) \nu_S, \quad S^T(X) = X - S^\perp(X).
\]
Here $\nu_S$ is the orienting normal.  Observe that $S^T(X)$ lies tangent to $S$, and each $S^\perp(X)$, $S^T(X)$ is Borel-measurable.
\end{definition}

\begin{definition}
Let $V$ be an integral $n$-varifold in $U \subset \R^N$.  We say $V$ has \emph{free-boundary in $S \subset U$} if
\begin{equation}\label{eqn:fb-varifold}
\delta V(X) = - \int S^T(H) \cdot X d\mu_V \quad \forall X \in \cT(S, U) \cap C^1,
\end{equation}
for some $S^T(H) \in L^1_{loc}(U, \R^N; \mu_V)$.
\end{definition}
Of course when the mean curvature vector exists then $H \cdot X = S^T(H) \cdot X$ for every $X \in \cT(S, U)$.  We write $S^T(H)$ in \eqref{eqn:fb-varifold} to emphasize that $S^T(H)$ is the natural quantity for free-boundary varifolds.

As with classical free-boundary manifolds, if $S$ is a hyperplane then $V$ can be reflected to a varifold without boundary.
\begin{prop}\label{prop:reflect-varifold}
Let $V$ have free-boundary in a hyperplane $P \subset \R^N$, and write $A : \R^N \to \R^N$ for the reflection about $P$.  Then the varifold $\tilde V = V + A_\sharp V$ satisfies
\begin{equation}\label{eqn:reflect-varifold-var}
\delta \tilde V(Y) = -\int S^T(H_V) \cdot Y d\mu_V -  \int (A \circ S^T(H_V) \circ A) \cdot Y d\mu_{A_\sharp V} .
\end{equation}
So $||\delta \tilde V|| \leq ||\delta V|| + ||\delta (A_\sharp V)||$, and $\tilde V$ has no generalized boundary.

Moreover, for any precompact $W$ have
\begin{equation}\label{eqn:reflect-varifold-L2}
\int_W |H_{\tilde V}|^2 d\tilde\mu \leq \int_W |S^T(H_V)|^2 d\mu_V + |S^T(H_V)\circ A|^2 d\mu_{A_\sharp V} .
\end{equation}
\end{prop}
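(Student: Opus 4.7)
The plan is to exploit the fact that reflection through a hyperplane is a rigid Euclidean isometry, so any test field $Y \in C_c^1(\R^N, \R^N)$ can be ``symmetrized'' by adding its reflected copy, and the symmetrized field is automatically tangent to $P$ along $P$. This immediately lets me invoke the free-boundary hypothesis for $V$, and a change of variables under $A$ converts the result into an integral against $\mu_{A_\sharp V}$.

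Concretely, first I would record the basic change-of-variables identity: since $A$ is an isometry with $A \circ A = \mathrm{Id}$, pushing a vector field through gives
\[
\delta(A_\sharp V)(Y) = \int \mathrm{div}_{A(T_x V)} Y \big|_{A(x)} \, d\mu_V(x) = \delta V(A \circ Y \circ A).
\]
Hence $\delta\tilde V(Y) = \delta V(Y + A\circ Y\circ A) = \delta V(Z)$ where $Z := Y + A\circ Y\circ A$. Next I would check that $Z$ is admissible in the free-boundary formula: at $x \in P$ we have $A(x)=x$, so $Z(x) = Y(x) + A(Y(x))$, whose normal component cancels, giving $Z(x) \in T_x P$. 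Thus $Z \in \cT(P)$ and the hypothesis yields
\[
\delta V(Z) = -\int S^T(H_V)\cdot Z \, d\mu_V = -\int S^T(H_V)\cdot Y \, d\mu_V - \int S^T(H_V)\cdot (A\circ Y\circ A)\, d\mu_V.
\]
Reversing the pushforward on the last integral (again using $A^2=\mathrm{Id}$ and $A$ orthogonal, so $A v \cdot A w = v\cdot w$) rewrites it as $-\int (A\circ S^T(H_V)\circ A)\cdot Y\, d\mu_{A_\sharp V}$, which is \eqref{eqn:reflect-varifold-var}. The total variation bound $\|\delta\tilde V\|\leq \|\delta V\| + \|\delta(A_\sharp V)\|$ then follows from the identity $\delta\tilde V = \delta V + \delta(A_\sharp V)$ as distributions, and the absence of a generalized boundary for $\tilde V$ is read off \eqref{eqn:reflect-varifold-var}, since the right-hand side is absolutely continuous with respect to $\mu_{\tilde V} = \mu_V + \mu_{A_\sharp V}$.

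The only slightly delicate step is the $L^2$ estimate \eqref{eqn:reflect-varifold-L2}, because $\mu_V$ and $\mu_{A_\sharp V}$ can overlap on $P$, so I cannot simply read off $H_{\tilde V}$ pointwise. I would handle this by Radon--Nikodym: let $f_i = d\mu_i/d\mu_{\tilde V}$ with $\mu_1 = \mu_V$, $\mu_2 = \mu_{A_\sharp V}$, so $f_1+f_2 = 1$ $\mu_{\tilde V}$-a.e., and set $g_1 = S^T(H_V)$, $g_2 = A\circ S^T(H_V)\circ A$. Then \eqref{eqn:reflect-varifold-var} identifies
\[
H_{\tilde V} = f_1 g_1 + f_2 g_2 \qquad \mu_{\tilde V}\text{-a.e.}
\]
Convexity of $|\cdot|^2$ gives $|H_{\tilde V}|^2 \le f_1|g_1|^2 + f_2|g_2|^2$, and integrating against $\mu_{\tilde V}$ turns the right-hand side into $\int|S^T(H_V)|^2 d\mu_V + \int|S^T(H_V)\circ A|^2 d\mu_{A_\sharp V}$, using that $|A v|=|v|$. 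This is \eqref{eqn:reflect-varifold-L2}.

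The one place I would take extra care is the change of variables in the first variation of $A_\sharp V$, because $\delta(A_\sharp V)(Y)$ is the integral of $\mathrm{div}_{\mathrm{plane}} Y$ over the pushed-forward Grassmanian support, and one has to verify that the orthogonality of $A$ turns this exactly into $\mathrm{div}_{T_x V}(A\circ Y\circ A)$ at $x$; beyond that the argument is essentially symmetrization plus convexity.
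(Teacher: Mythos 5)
Your derivation of \eqref{eqn:reflect-varifold-var} — push $\delta(A_\sharp V)(Y)$ through the isometry to get $\delta V(A\circ Y\circ A)$, observe the symmetrized field $Z=Y+A\circ Y\circ A$ is tangent to $P$, invoke the free-boundary hypothesis, and pull back — is exactly what the paper does, as is the reading of the total-variation bound and boundary statement off the resulting formula. For \eqref{eqn:reflect-varifold-L2} you diverge: you decompose via Radon--Nikodym, writing $H_{\tilde V}=f_1g_1+f_2g_2$ with $f_1+f_2=1$ and then apply convexity of $|\cdot|^2$ (Jensen), which gives a clean pointwise identification of $H_{\tilde V}$ as a convex combination of the two reflected curvatures on the overlap. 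The paper instead argues by duality: it bounds $\int H_{\tilde V}\cdot X\,d\tilde\mu$ by two Cauchy--Schwarz applications and then combines by the Cauchy--Schwarz inequality for sums, reading off the $L^2$ bound without ever naming the Radon--Nikodym derivatives. Both arguments are short and correct; yours is a bit more explicit about what $H_{\tilde V}$ actually is on $P$ (which may be helpful intuition), while the paper's sidesteps that identification entirely. One small caveat worth noting in your write-up: $g_1$ is only defined $\mu_V$-a.e.\ and $g_2$ only $\mu_{A_\sharp V}$-a.e., so the expression $f_1g_1+f_2g_2$ should be understood as independent of the Borel representatives chosen, which holds because $f_ig_i$ vanishes wherever $g_i$ is not determined; this is standard but deserves a word.
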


\begin{proof}
By direct calculation we have
\[
(div_{A(L)}(Y))(A(x)) = (div_{L}(A\circ Y \circ A))(x).
\]
Since $Y + A \circ Y \circ A \in \cT(S)$, we therefore obtain
\begin{align*}
\delta\tilde V(Y) &= \delta V(Y + A \circ Y \circ A) \\
&= -\int S^T(H_V) \cdot (Y + A \circ Y \circ A) d\mu_V \\
&= -\int S^T(H_V) \cdot Y d\mu_V -  \int (A \circ S^T(H_V) \circ A) \cdot Y d\mu_{A_\sharp V} .
\end{align*}

To deduce inequality \eqref{eqn:reflect-varifold-L2}, we use that for any $X \in C^1_c(W)$, 
\begin{align*}
&\int H_{\tilde V} \cdot X d\tilde \mu  \\
& \leq \left( \int_W |S^T(H_V)|^2 d\mu_V \right)^{1/2} \left( \int |X|^2 d\mu_V \right)^{1/2} \\
&\quad + \left( \int_W |S^T(H_V)\circ A|^2 d\mu_{A_\sharp V} \right)^{1/2} \left( \int |X|^2 d\mu_{A_\sharp V} \right)^{1/2} \\
&\leq \left( \int_W |S^T(H_V)|^2 d\mu_V + |S^T(H_V)\circ A|^2 d\mu_{A_\sharp V} \right)^{1/2} \left( \int |X|^2 d\tilde \mu \right)^{1/2} . \qedhere
\end{align*}
\end{proof}

We shall prove that any free-boundary varifold has locally bounded total variation.  Here is the intuition.  Suppose $V$ were smooth up to the barrier, and we have control over $S^T(H)$.  Almost-everywhere in $\spt(V) \cap S$ we have $T_xV \subset T_x S$, and $\proj_{N_x S} \circ \sff_V = {\sff_S}|_{T_x V}$.  So we have control over $S^\perp(H)$ also.  Then using a trace formula we obtain control over $||\partial V||$.

For general integral varifolds we accomplish this using a monotonicity formula due to Allard.
\begin{prop}[compare Allard \cite{allard:first-variation}]\label{prop:fb-finite-var}
If $V$ is a free-boundary varifold in $U$, then $||\delta V||$ is locally finite in $U$, and for every $W \subset\subset W' \subset\subset U$ we have
\begin{equation}\label{eqn:boundary-var}
||\delta V||(W) \leq c(n) \int_{W'} |S^TH| d\mu_V + c(n) \left( \frac{1}{r_S} + \frac{1}{d(W, \partial W')} \right) \mu_V(W') .
\end{equation}
(actually we can replace $r_S$ with $r_2(S)$).

Therefore a locally-finite integral varifold $V$ has free-boundary in $S$ if and only if $V$ has locally bounded first-variation, and the generalized boundary measure $\sigma_V$ is supported in $S$, and $\nu_V = \nu_S$ at $\sigma_V$-a.e. $x$.
\end{prop}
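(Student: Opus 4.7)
The plan is to bound $|\delta V(X)|$ for arbitrary $X \in C^1_c(W)$ by $|X|_0$ times the right-hand side of \eqref{eqn:boundary-var}, via a decomposition of $X$ into tangential and normal pieces near $S$. Let $\nu(x) := \nu_S(\zeta(x))$ denote the smooth normal extension, defined on $\{d < r_S\}$, and fix a cutoff $\chi_\rho \in C^\infty([0,\infty))$ that equals $1$ on $[0,\rho/2]$, vanishes on $[\rho,\infty)$, and satisfies $|\chi'_\rho| \leq 4/\rho$; the scale $\rho \in (0,\min(r_S,d(W,\partial W'))/10]$ is a parameter I will ultimately send to $0$. Writing $\hat X := X - \chi_\rho(d)(X \cdot \nu)\nu$ and $X^\perp := \chi_\rho(d)(X \cdot \nu)\nu$, one checks that $\hat X|_S \in TS$, so $\hat X \in \cT(S,W') \cap C^1$ with $|\hat X|_0 \leq 2|X|_0$; the defining relation \eqref{eqn:fb-varifold} applied to $\hat X$ gives at once
\[
|\delta V(\hat X)| \leq 2|X|_0 \int_{W'} |S^T H| \, d\mu_V.
\]

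The main work is to estimate $\delta V(X^\perp) = \int \text{div}_V(X^\perp) \, d\mu_V$. Using $\nabla d = \nu$, a direct expansion produces
\[
\text{div}_V(X^\perp) = \chi'_\rho(d)(X \cdot \nu) |\nabla^V d|^2 + \chi_\rho(d) \nabla(X \cdot \nu) \cdot \nabla^V d + \chi_\rho(d)(X \cdot \nu) \text{div}_V(\nu),
\]
and the standard bounds $|D\nu|, |\text{div}_V(\nu)| \leq C(n)/r_S$ (from the $C^2$ regularity of $\zeta$) reduce the problem to controlling the coarea-weighted integrals $\int_{\{d<\rho\}}|\nabla^V d|^k \, d\mu_V$ for $k = 1,2$. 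By the coarea formula applied to $d$ on the rectifiable support of $V$, both quantities are bounded by $\rho$ times a supremum of $(n-1)$-dimensional slice masses $\haus^{n-1}(V \cap \{d=t\} \cap W')$. The main obstacle is bounding these slice masses in terms of $\mu_V$ and $\int |S^T H|$, and I would do this via a localized reflection: at each $y \in S$ the inverse projection $\Phi_y$ from Section 2.2 flattens $S$ to a hyperplane $P$, the pushforward $(\Phi_y^{-1})_\sharp V$ is an approximate free-boundary varifold in $P$ (with errors of order $|S^T H| + r_S^{-1}$), and Proposition \ref{prop:reflect-varifold} then yields a boundaryless reflected varifold $\tilde V$ with $L^1$ mean curvature. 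Standard Allard monotonicity for $\tilde V$ gives $\mu_{\tilde V}(B_r(z)) \leq C r^n$ for $r \leq r_S/10$, which via a Vitali cover of $S \cap W'$ by $\rho$-balls produces the slice estimate $\haus^{n-1}(V \cap \{d=t\} \cap W') \leq C(\mu_V(W'') + \int_{W''}|S^T H|)/d(W',\partial W'')$ for a slightly larger $W'' \subset\subset U$.

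Collecting these bounds gives $|\delta V(X^\perp)| \leq C(n)|X|_0(1/r_S + 1/d(W,\partial W'))(\mu_V(W'') + \int_{W''}|S^T H|) + C|X|_1 \rho$; since $\delta V(X) = \delta V(\hat X) + \delta V(X^\perp)$ does not depend on $\rho$, sending $\rho \downarrow 0$ eliminates the $|X|_1$ contribution and combining with the bound on $\delta V(\hat X)$ yields \eqref{eqn:boundary-var}. For the converse direction, once $||\delta V||$ is locally finite the Riesz representation gives $\delta V(X) = -\int H \cdot X \, d\mu_V + \int X \cdot \nu_V \, d\sigma_V$; under the hypotheses $\spt \sigma_V \subset S$ and $\nu_V = \nu_S$ $\sigma_V$-a.e., any $X \in \cT(S,U)$ satisfies $X \cdot \nu_V = 0$ on $\spt \sigma_V$, and $X \in TS$ forces $H \cdot X = S^T H \cdot X$, so \eqref{eqn:fb-varifold} follows.
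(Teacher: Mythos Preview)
Your decomposition $X = \hat X + X^\perp$ and the handling of $\hat X$ and of the zero-order terms in $\mathrm{div}_V(X^\perp)$ match the paper's approach. The genuine gap is in your treatment of the term $\int \chi'_\rho(d)(X\cdot\nu)|\nabla^V d|^2\,d\mu_V$. You propose to control $\rho^{-1}\int_{\{d<\rho\}}|\nabla^V d|^2\,d\mu_V$ by bounding slice masses $\haus^{n-1}(V\cap\{d=t\})$ via a local reflection and ``standard Allard monotonicity.'' But the only information you have on the reflected varifold $\tilde V$ is that $H_{\tilde V}\in L^1_{loc}$ (this is all Proposition~\ref{prop:reflect-varifold} delivers from the free-boundary hypothesis), and Allard's monotonicity formula with mean curvature merely in $L^1$ does \emph{not} yield a uniform density bound $\mu_{\tilde V}(B_r(z))\leq Cr^n$: the error term $\int_r^R s^{-n}\int_{B_s}|H_{\tilde V}|\,d\mu_{\tilde V}\,ds$ need not stay bounded as $r\to 0$. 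So the slice-mass estimate you invoke is unjustified, and without it the $\chi'_\rho$ term is uncontrolled.

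The paper avoids slices entirely. It tests the free-boundary condition \eqref{eqn:fb-varifold} against the vector field $\phi(d)\,h(x)\,d\,Dd$, which lies in $\cT(S)$ because $d$ vanishes on $S$; this yields Allard's \emph{boundary} monotonicity identity (Proposition~\ref{prop:boundary-mono}), from which the limit
\[
\Gamma(h)=\lim_{\tau\to 0}\tau^{-1}\int_{B_\tau(S)} h\,|D^T d|^2\,d\mu_V
\]
is shown to exist and to be bounded by $\int_{W'}|S^T H|\,d\mu_V + C(r_S^{-1}+d(W,\partial W')^{-1})\mu_V(W')$ directly, with no density or slice estimates required. The $\chi'_\rho$ term in your computation is then exactly $\pm\Gamma(X\cdot\nu)$-type as $\rho\to 0$. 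Finally, note that you have only argued the ``if'' direction of the equivalence; the paper also shows that the free-boundary condition forces $\spt\sigma_V\subset S$ and $\nu_V=\nu_S$ $\sigma_V$-a.e., by testing against $Y^{ST}+Y^{S\perp}$ and approximating $\nu_V$ in $L^1(\sigma_V)$.
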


\begin{proof}
From the boundary monotonicity formula (see \ref{prop:boundary-mono}) we have that, for any $h \in C^1_c(W, \R)$, 
\begin{align}
&\Gamma(h) := \lim_{\tau \to 0} \tau^{-1} \int_{B_\tau(S)} h |D^T d|^2 d\mu_V \nonumber \\
&= r_S^{-1} \int_{B_{r_S}(S)} h |D^T d|^2 \nonumber \\
&\quad - \int_{B_{r_S}(S) \setminus S}  (1 - d/r_S) (D^T h \cdot D^T d + h (tr_V D^2 d) + h (S^TH \cdot D d) ) d\mu_V . \label{eqn:boundary-mono}
\end{align}

Letting $g$ be a function satisfying:
\[
0 \leq g \leq 1, \quad g \equiv 1 \text{ on } W, \quad \spt(g) \subset W' , \quad |Dg| \leq 10 / d(W, \partial W').
\]
By \eqref{eqn:boundary-mono} the limit $\Gamma(h)$ always exists, and by our construction of $g$ we have $|h| \leq |h|_{0} g$.  Since $|D^2 d| \leq c(n)/r_S$ on $B_{r_S}(S)$, we deduce
\begin{align}
|\Gamma(h)| &\leq |h|_0 \Gamma(g) \nonumber \\
&\leq |h|_0 \left( \int_{W'} |S^TH| d\mu_V + c(n) (r_S^{-1} + d(W, \partial W')^{-1})\mu_V(W') \right) .\label{eqn:boundary-mono-limit}
\end{align}

Given $X \in C^1_0(W, \R^N)$, define in $B_{r_S}(S)$ the vector fields
\[
X^{S\perp} = (X \cdot D d) D d, \quad X^{ST} = X - X^{S\perp} ,
\]
and let $\eta : \R_+ \to \R_+$ be a function satisfying
\[
\eta = 0 \text{ on } [0, \rho/2], \quad \eta = 1 \text{ on } [\rho, \infty), \quad 0 \leq \eta' \leq 10/\rho ,
\]
for $\rho < r_S$.

We calculate
\begin{align*}
\int div_V(X) d\mu_V 
&= \int div_V(\eta(d) X + (1-\eta(d)) X^{ST} + (1-\eta(d))X^{S\perp}) d\mu_V \\
&= -\int S^T(H) \cdot (\eta(d) X + (1-\eta(d))X^{ST}) d\mu_V \\
&\quad + \int (-\eta') (X \cdot D d) |D^T d|^2 d\mu_V + \int (1-\eta) div_V(X^{S\perp}) d\mu_V .
\end{align*}

We bound the last two terms.  Taking $\rho \to 0$, we have
\begin{align*}
\lim_{\rho \to 0} \text{(penultimate term)}
&= \lim_{\rho \to 0} \rho^{-1} \int_{B_{\rho}(S)} (-\rho \eta' (X \cdot D d)) |D^T d|^2 d\mu_V \\
&= \pm 10 C |X|_0,
\end{align*}
using \eqref{eqn:boundary-mono-limit}, where we simply write $C$ for the expression in brackets.

By considering a countable $C^1$ cover of the underlying rectifiable set, we have $T_x V \subset T_x S$ for $\mu_V$-a.e. $x \in S$.  Therefore by the dominated convergence theorem we have
\begin{align*}
\lim_{\rho \to 0} \text{(last term)}
&= \int_S div_V((X \cdot \nu_S) \nu_S) d\mu_V \\
&= \int_S (X \cdot \nu_S) div_V(\nu_S) d\mu_V \\
&= \pm c(n) r_S^{-1} |X|_0 \mu_V(W) .
\end{align*}
This proves equation \eqref{eqn:boundary-var}.

We prove the equivalence assertion.  The ``if'' direction is clear.   Conversely, the above show $||\delta V||$ is Radon.  The free-boundary condition \eqref{eqn:fb-varifold} trivially shows $||\delta V|| << \mu_V$ on $\R^N \setminus S$, and since $S$ is closed we have $\spt(\sigma_V) \subset S$.

Given any $Y \in C^1_c(B_{r_S}(S), \R^N)$, we have using the notation above
\begin{align*}
\delta V(Y)
&= \delta V(Y^{ST} + Y^{S\perp}) \\
&= -\int H \cdot Y^{ST} d\mu_V + \int \nu_V \cdot S^\perp(Y) d\sigma_V .
\end{align*}
Therefore
\[
\int S^T(\nu_V) \cdot Y d\sigma_V = 0  \quad \forall Y \in C^1_c(\R^N, \R^N) .
\]

Given any $W \subset\subset \R^N$ we can approximate $\nu_V$ in $L^1(W, \R^N; \sigma_V)$ by $Y \in C^1_c(W, \R^N)$, to deduce $S^T(\nu_V) = 0$ $\sigma_V$-a.e.
\end{proof}

We have the following immediate Corollary.
\begin{corollary}\label{cor:uniform-fb-bounds}
Let $V_i$ be a sequence of free-boundary varifolds with boundary in $S_i \subset U_i$.  Suppose $U_i \to U$ and $\inf r_{S_i} > 0$.  If
\[
\sup_i \int_W 1 + |S^T(H_i)| d\mu_{V_i} \leq C_1(W) \quad \forall W \subset\subset U,
\]
then
\[
\sup_i ||\delta V_i||(W) \leq D_1(W) \quad \forall W \subset\subset U .
\]
Here $D_1$ depends only on $n, \inf r_{S_i}, C_1, W, U$.
\end{corollary}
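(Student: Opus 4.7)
The corollary falls out of Proposition \ref{prop:fb-finite-var} essentially by inspection; the only real content is bookkeeping around the varying ambient domains $U_i$.

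First I would fix an arbitrary precompact $W \subset\subset U$ and pick an intermediate open set $W'$ with $W \subset\subset W' \subset\subset U$. By the definition of $U_i \to U$ as open sets, there exists $i_0$ such that $W' \subset\subset U_i$ for every $i \geq i_0$. For each such $i$, Proposition \ref{prop:fb-finite-var} applied to the free-boundary varifold $V_i$ in $U_i$ (with $S = S_i$) gives
\[
||\delta V_i||(W) \leq c(n) \int_{W'} |S^T H_i| \, d\mu_{V_i} + c(n)\left( \frac{1}{r_{S_i}} + \frac{1}{d(W, \partial W')} \right) \mu_{V_i}(W').
\]

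Next I would bound each factor on the right-hand side by the standing hypotheses: $\int_{W'} |S^T H_i| d\mu_{V_i}$ and $\mu_{V_i}(W')$ are each at most $C_1(W')$, while $r_{S_i} \geq \inf_j r_{S_j} > 0$. This produces a bound depending only on $n$, $\inf_j r_{S_j}$, $C_1(W')$, and $d(W, \partial W')$ -- in other words, on $n$, $\inf_j r_{S_j}$, $C_1$, $W$, and $U$, valid uniformly for $i \geq i_0$. For the finitely many remaining indices $i < i_0$, Proposition \ref{prop:fb-finite-var} applied inside $U_i$ still produces a finite value for $||\delta V_i||(W \cap U_i)$, and taking the maximum over this finite set together with the uniform bound above yields the desired $D_1(W)$.

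There is no real obstacle here: the monotonicity-based estimate \eqref{eqn:boundary-var} already does all the work, and the only subtlety is the mild one of checking that a single test set $W'$ lies inside all but finitely many $U_i$ so that \eqref{eqn:boundary-var} can be applied with constants independent of $i$.
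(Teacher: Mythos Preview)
Your proposal is correct and matches the paper's approach: the paper simply labels this an ``immediate Corollary'' of Proposition~\ref{prop:fb-finite-var} with no further argument, and your write-up is exactly the bookkeeping that makes ``immediate'' precise. One small remark: taking the maximum over the finitely many $i<i_0$ technically introduces dependence on those particular $V_i$, so the cleanest reading of the stated dependency of $D_1$ is that the bound holds for all sufficiently large $i$ (which is all that is ever used downstream, e.g.\ in Theorem~\ref{thm:varifold-compactness}).
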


\vspace{5mm}

We wish to prove a compactness Theorem for free-boundary varifolds.  We require some initial approximation results.

\begin{prop}\label{prop:fb-density-theorems}
Let $U \subset \R^N$ be an open set.  We have the following density theorems.
\begin{enumerate}
\item[A)] The space $\{ \phi \in C^\infty_c(U, \R) : D\phi \in \cT(S, U) \}$ is dense in $C^0_c(U, \R)$.

\item[B)] If $\mu$ is finite and rectifiable on $U$, and $1 \leq p < \infty$, then the $L^p(\mu)$ closure of $\cT(S, U) \cap C^1$ is $S^T(L^p(U, \R^N; \mu))$.
\end{enumerate}
\end{prop}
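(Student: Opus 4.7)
The plan for both parts is to give explicit, tube-based constructions that enforce the tangency condition near $S$ while leaving the approximation intact far from $S$; the smooth projection $\zeta$ (with the key fact $D\zeta|_S = \proj_{TS}$) does the heavy lifting.

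For A, given $f \in C^0_c(U, \R)$ and $\epsilon > 0$, first mollify to obtain $g \in C^\infty_c(U, \R)$ with $|f - g|_0 < \epsilon/2$. The idea is to pre-compose with $\zeta$ in a thin tube so that the modified function depends only on the tangential variables near $S$. Pick $\rho \in (0, r_S)$ and a smooth cutoff $\psi$ with $\psi \equiv 1$ on $\{d < \rho/2\}$ and $\spt(\psi) \subset \{d < \rho\}$, and set
$$\phi(x) = \psi(x)\, g(\zeta(x)) + (1 - \psi(x))\, g(x),$$
with the first summand interpreted as $0$ wherever $\zeta$ is undefined (which lies outside $\spt(\psi)$). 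Then $\phi \in C^\infty_c(U, \R)$. At any $x \in S$, $\psi \equiv 1$ in a neighborhood, so locally $\phi = g \circ \zeta$; since $D\zeta(x) = \proj_{T_x S}$ at $x \in S$, the chain rule gives $D\phi(x) = \proj_{T_x S}(Dg(x)) \in T_x S$, hence $D\phi \in \cT(S, U)$. Finally $|\phi - g|_0 \leq \sup_{d(x) < \rho} |g(\zeta(x)) - g(x)| \leq \rho\, |Dg|_0$, which is $< \epsilon/2$ once $\rho$ is chosen small.

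For B, the inclusion $\cT(S, U) \cap C^1 \subset S^T(L^p(\mu))$ is automatic since $S^T(X) = X$ for any continuous $X$ tangent to $S$, and any $L^p$-convergent sequence has a $\mu$-a.e.\ convergent subsequence, so the $L^p$-closure of $\cT \cap C^1$ remains inside $S^T(L^p)$. Conversely, given $Y \in L^p(U, \R^N; \mu)$, use density of $C^1_c(U, \R^N)$ in $L^p(\mu)$ (finiteness of $\mu$) to pick $Y_n \to Y$ in $L^p$; the pointwise contraction $|S^T(X) - S^T(Z)| \leq |X - Z|$ gives $S^T(Y_n) \to S^T(Y)$ in $L^p(\mu)$. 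It remains to approximate each $S^T(Y_n)$ by a $\cT \cap C^1$ field. Pick $\eta_\delta \in C^\infty(\R_+, [0,1])$ with $\eta_\delta(0) = 1$ and $\eta_\delta \equiv 0$ on $[\delta, \infty)$, and set
$$X_{n, \delta}(x) = Y_n(x) - \eta_\delta(d(x))\, (Y_n(x) \cdot \nu_S(\zeta(x)))\, \nu_S(\zeta(x))$$
on $\{d < r_S\}$, extended by $Y_n(x)$ elsewhere. Then $X_{n, \delta} \in \cT(S, U) \cap C^1$, agrees with $S^T(Y_n)$ outside $\{0 < d < \delta\}$, and $|X_{n,\delta} - S^T(Y_n)| \leq |Y_n|_0$ on that set. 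Finiteness of $\mu$ gives $\mu(\{0 < d < \delta\}) \to 0$ as $\delta \to 0$, so $X_{n, \delta} \to S^T(Y_n)$ in $L^p(\mu)$, and a diagonal argument concludes.

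Neither step is a serious obstacle. The key quantitative inputs are that $D\zeta|_S = \proj_{TS}$ (so pre-composition with $\zeta$ produces the correct tangency in A), and that finiteness of $\mu$ forces $\mu(\{0 < d < \delta\}) \to 0$ so the bounded discontinuous error on the shrinking tube can be absorbed in $L^p$ in B. These are soft density statements; no boundary monotonicity or estimates from Proposition \ref{prop:fb-finite-var} are needed.
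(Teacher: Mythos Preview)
Your proof is correct. One small fix in part B: as written, $\eta_\delta(d(x))$ need not be $C^1$ across $S$, since the unsigned distance $d$ has a kink there; simply require $\eta_\delta \equiv 1$ on $[0,\delta/2]$ (not merely $\eta_\delta(0)=1$) so that $\eta_\delta\circ d$ is locally constant near $S$ and the formula for $X_{n,\delta}$ is genuinely $C^1$.

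Your route for B differs from the paper's. The paper approximates $X$ globally by some $Y_1 \in C^1_c$ and separately approximates $S^T(X)|_S$ by a tangent field $Y_2 \in C^1_c(S,TS)$ (this step invokes rectifiability of $\mu\llcorner S$), then extends and patches $Y_1|_{\{d>\rho\}}$ and $Y_2$ across a thin tube. You instead approximate once by $Y_n \in C^1_c$ and then explicitly subtract the extended normal component $\eta_\delta(d)(Y_n\cdot\nu_S\circ\zeta)\,\nu_S\circ\zeta$. Both arguments hinge on the same soft fact $\mu(\{0<d<\delta\})\to 0$, but yours avoids the separate on-$S$ approximation and extension step, and in fact never uses rectifiability of $\mu$ at all---finiteness alone gives both the $C^1_c$-density in $L^p(\mu)$ and the continuity-from-above. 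So your argument is slightly more elementary and proves a mildly stronger statement; the paper's version has the virtue of making the two pieces (interior and on-$S$) more visibly independent.
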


\begin{proof}
Part A) is clear.  We prove part B).  Fix an $X \in L^p(\mu)$, and since $\mu$ is rectifiable (see Lemma 7.2 in \cite{ilmanen:elliptic-reg}) we can choose $Y_1 \in C^1_c(U, \R^N)$ so $||Y_1 - X||_{L^p(\mu)} < \epsilon$.  Now choose $Y_2 \in C^1_c(S, TS)$ so that $||Y_2 - S^T(X)||_{L^p(\mu \llcorner S)} < \epsilon$, which we can do since $\mu \llcorner S$ is rectifiable also.  Let
\[
\Lambda = \max\{ |Y_1|_0, |Y_2|_0, 1\} , \quad B_R \supset \spt Y_1 \cup \spt Y_2.
\]

We can pick $\rho$ so that
\[
\mu (B_R \cap (B_\rho(S) \setminus S)) < \epsilon / \Lambda , \quad ||X|_{B_R \cap (B_\rho(S) \setminus S)} ||_{L^p(\mu)} < \epsilon.
\]
Let $\tilde Y_1$ be a $C^1_c$ extension of ${Y_1}|_{B_R \setminus B_\rho(S)}$, and $\tilde Y_2$ a $C^1_c$ extension of $Y_2$, and we can ensure
\[
\max\{|\tilde Y_1|_0, |\tilde Y_2|_0\} \leq \Lambda, \quad Y_i = \tilde Y_i \text{ away from } B_R \cap (B_\rho(S) \setminus S) .
\]

Then $\tilde Y = \tilde Y_1 + \tilde Y_2 \in \cT(S)$, and 
\begin{align*}
||S^T(X) - \tilde Y||_{L^p(\mu)}
&\leq 2\epsilon + ||X|_{B_R \cap (B_\rho(S) \setminus S)}|| + ||Y_1 - \tilde Y_1|| + ||Y_2 - \tilde Y_2|| \\
&\leq 3\epsilon + 4\Lambda \cdot \mu(B_R \cap (B_\rho(S) \setminus S)) \\
&\leq 10 \epsilon .
\end{align*}

This shows $\cT(S)$ is dense in $S^T(L^p(\mu))$.  Conversely, given $X \in L^p(\mu)$, then $S^T(X) = X$ iff
\[
\int_{S \cap W} X \cdot \nu_S d\mu = 0 \quad \forall W \subset\subset \R^N .
\]
The ``only if'' part is trivial, and the ``if'' by differentiation.  Since the above relation is preserved under $L^p(\mu)$ limits, $S^T(L^p(\mu))$ is closed.
\end{proof}

\begin{theorem}\label{thm:varifold-compactness}
Let $V_i$ have free-boundary in $S_i \subset U_i$.  Suppose $\inf r_{S_i} > 0$, $U_i \to U$, and $S_i \to S$ in $C^3_{loc}$.  Suppose
\[
\sup_i \int_{W} 1 + |S_i^T(H_i)|^2 d\mu_{V_i} \leq C_1(W) \quad \forall W \subset\subset U.
\]

Then there is an integral $n$-varifold $V$ with free-boundary in $S \subset U$, such that after passing to a subsequence, we have $V_i \to V$ as varifolds, $S_i^T(H_{V_i}) d\mu_{V_i} \to S^T(H_V) d\mu_V$ as Radon measures on $\cT(S, U)$, and
\[
\int_W |S^T(H_V)|^2 \leq \liminf_i \int_W |S^T(H_{V_{i'}})|^2 \quad \forall W \subset \subset U.
\]

In particular, if $\phi_i, \phi$ are $C^1_c$ functions with uniformly bounded supports, with $D\phi \in \cT(S, U)$ and $D \phi_i \to D\phi$ in $C^0$, then we have
\begin{gather}\label{eqn:H-lsc}
\int -|S^T(H_V)|^2 \phi + S^T(H_V) \cdot D\phi d\mu_V \geq \limsup_i \int -|S_i^T(H_{V_i})|^2 \phi_i + S_i^T(H_{V_i}) \cdot D\phi_i d\mu_{V_i}.
\end{gather}
\end{theorem}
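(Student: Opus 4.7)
The plan is to first compactify the varifolds and mean-curvature vector measures separately, then identify the $L^2$ limit with the free-boundary mean curvature of the limit varifold. For the first step, the uniform $L^2$ bound on $S_i^T(H_{V_i})$ together with the mass bound and Corollary~\ref{cor:uniform-fb-bounds} yields uniform local bounds on $\|\delta V_i\|$; Allard's integral-varifold compactness theorem then extracts a subsequence $V_i \to V$ with $V$ integral and of locally bounded first variation. Cauchy-Schwarz upgrades the $L^2$ control to a uniform local $L^1$ bound on $|S_i^T(H_{V_i})|\,d\mu_{V_i}$, and after a further subsequence Banach-Alaoglu produces a vector-valued Radon measure $\vec\mu$ on $U$ with $S_i^T(H_{V_i})\,d\mu_{V_i} \rightharpoonup \vec\mu$ in $C^0_c(U,\R^N)^*$. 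A standard convexity/duality argument (cf.\ Ilmanen \cite{ilmanen:elliptic-reg}) then identifies $\vec\mu = \vec H\,d\mu_V$ for some $\vec H \in L^2_{loc}(\mu_V)$ satisfying $\int_W |\vec H|^2\,d\mu_V \le \liminf_i \int_W |S_i^T(H_i)|^2\,d\mu_{V_i}$ on precompacts $W\subset\subset U$.

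The heart of the argument is to identify $\vec H$ with $S^T(H_V)$. Given $X \in \cT(S,U)\cap C^1_c$, I would construct $X_i \in \cT(S_i,U_i)\cap C^1_c$ with $X_i \to X$ in $C^1$: letting $\zeta_i$ denote nearest-point projection onto $S_i$ (well-defined near $S_i$ because $\inf r_{S_i} > 0$) and $\chi_i$ an appropriate cutoff, set
\[
X_i(x) := X(x) - \chi_i(x)\,\bigl(X(x)\cdot\nu_{S_i}(\zeta_i(x))\bigr)\,\nu_{S_i}(\zeta_i(x)).
\]
Since $S_i \to S$ in $C^3_{loc}$ one has $\nu_{S_i}\circ\zeta_i \to \nu_S\circ\zeta$ in $C^2_{loc}$, and combined with $X\cdot\nu_S \equiv 0$ on $S$ the correction term tends to zero in $C^1_{loc}$. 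Passing to the limit in
\[
\delta V_i(X_i) = -\int S_i^T(H_{V_i})\cdot X_i\,d\mu_{V_i}
\]
— the left side by varifold convergence with $X_i \to X$ in $C^1$, the right side by the weak measure convergence — yields $\delta V(X) = -\int \vec H\cdot X\,d\mu_V$ for every such $X$. This is precisely the defining relation \eqref{eqn:fb-varifold}, so $V$ has free-boundary in $S$ with $S^T(H_V) = \vec H$, and the claimed convergence of vector-valued measures on $\cT(S,U)$ is then just the restriction of the above.

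For the sharpened estimate \eqref{eqn:H-lsc} (taking $\phi\ge 0$, as in Brakke-type applications), the term $\int S_i^T(H_i)\cdot D\phi_i\,d\mu_{V_i}$ converges to its counterpart by the weak convergence together with $D\phi_i \to D\phi$ uniformly on uniformly bounded supports. The $-|H|^2\phi$ term is handled by the completion-of-squares identity
\[
\int\bigl(-|H|^2\phi + H\cdot D\phi\bigr)\,d\mu = -\int \phi\,\bigl|H - \tfrac{D\phi}{2\phi}\bigr|^2\,d\mu + \int \tfrac{|D\phi|^2}{4\phi}\,d\mu,
\]
which displays the integrand as (upper-semi-continuous in $(\mu,H)$) $+$ (continuous in $\mu$), giving the desired $\limsup$ bound. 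The main obstacle is the tangential approximation in the second paragraph: one must exploit the $C^3_{loc}$ barrier convergence precisely enough to obtain $C^1$-convergence of vector fields tangent to the moving barriers $S_i$, so that both the first-variation identity (which requires test vectors tangent to $S_i$) and the final limit (stated for test vectors tangent to $S$) are accommodated simultaneously.
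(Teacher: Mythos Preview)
Your overall architecture is sound and close to the paper's: both use Corollary~\ref{cor:uniform-fb-bounds} plus Allard to obtain $V$, and both hinge on approximating a given $X\in\cT(S,U)\cap C^1$ by $X_i\in\cT(S_i,U_i)\cap C^1$ with $X_i\to X$ in $C^1$. The organization differs: you first extract a weak-$*$ limit $\vec\mu=\vec H\,d\mu_V$ of the vector measures $S_i^T(H_i)\,d\mu_{V_i}$ (Banach--Alaoglu plus the standard $L^2$-density lemma), and only afterward test against tangential fields to identify $S^T(\vec H)=S^T(H_V)$; the paper instead passes to the limit directly in the inequality $|\delta V_i(X_i)|\le(\int_W|S_i^T(H_i)|^2\,d\mu_{V_i})^{1/2}(\int|X_i|^2\,d\mu_{V_i})^{1/2}$ to see that $\delta V|_{\cT(S,W)}$ is $L^2(\mu_V)$-bounded, then invokes Proposition~\ref{prop:fb-density-theorems} and Riesz. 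Your route produces a stronger intermediate object, the paper's is slightly more economical. For~\eqref{eqn:H-lsc} the paper uses layer-cake plus Fatou applied to the already-established per-set lower semicontinuity on each $\{\phi>s\}$, while you use completion of the square.

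Two small technical points deserve repair. First, your explicit $X_i$ does not converge to $X$ in $C^1$ as written: the correction $\chi_i\,(X\cdot\nu_{S_i}\circ\zeta_i)\,\nu_{S_i}\circ\zeta_i$ converges to $\chi\,(X\cdot\nu_S\circ\zeta)\,\nu_S\circ\zeta$, which is nonzero off $S$ (you only know $X\cdot\nu_S=0$ \emph{on} $S$, not in a neighborhood). Replace $X(x)\cdot\nu_{S_i}(\zeta_i(x))$ by $X(\zeta_i(x))\cdot\nu_{S_i}(\zeta_i(x))$, so the correction is constant along normal rays; then it does go to zero in $C^1$ because $X|_{S_i}\cdot\nu_{S_i}\to X|_S\cdot\nu_S\equiv 0$ in $C^1$. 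Second, your completion-of-squares identity requires $|D\phi|^2/\phi$ to be locally $\mu_V$-integrable, which is guaranteed for $C^2$ test functions (where $|D\phi|^2\le 2\phi\,|D^2\phi|_0$) but not for merely $C^1_c$ functions as in the statement; either insert a $C^2$ approximation step, or switch to the paper's layer-cake argument, which needs no such integrability.
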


\begin{proof}
By Holder's inequality we have uniform $L^1_{loc}(\mu_{V_i})$ bounds on $S_i^T(H_{V_i})$, and hence by Corollary \ref{cor:uniform-fb-bounds} (and that $r_{S_i}$ is uniformly bounded below) we have uniform local bounds on $||\delta V_i||$.  Allard's compactness theorem implies subsequential convergence to some integral $n$-varifold $V$ in $U$, with locally finite variation.

Fix a precompact $W \subset\subset U$.  Choose any $X \in \cT(S, W) \cap C^1$.  We can find a sequence $X_i \in \cT(S_i, W) \cap C^1$ so that $X_i \to X$ in $C^1$.  We have that $\delta V_i(X_i) \to \delta V(X)$ and $\mu_{V_i}(|X_i|^2) \to \mu_V(|X|^2)$.

For each $i$ we have
\[
|\delta V_i(X_i)| \leq \left( \int_W |S_i^T(H_i)|^2 d\mu_{V_i} \right)^{1/2} \left( \int |X_i|^2 d\mu_{V_i} \right)^{1/2} .
\]
Take the limit on both sides, to deduce
\[
|\delta V(X)| \leq \left( \liminf_i \int_W |S_i^T(H_i)|^2 d\mu_{V_i} \right)^{1/2} \left( \int |X|^2 d\mu_V \right)^{1/2} .
\]

Therefore, using Proposition \ref{prop:fb-density-theorems}, we deduce $\delta V$ is an $L^2(\mu_V)$ operator on $S^T(L^2(W, \R^N; \mu_V))$.  So by Proposition \ref{prop:fb-finite-var} $V$ has free-boundary in $S$, and
\[
\int_W |S^T(H_V)|^2 d\mu_V = ||\delta V|_{\cT(S, W)}||(W) \leq \liminf_i \int_W |S^T(H_i)|^2 d\mu_i .
\]

Given any $X \in \cT(S, U)$, we can approximate $X$ by elements of $\cT(S, U) \cap C^1$, and $\cT(S_i, U_i) \cap C^1$ as above, to deduce
\[
\int S_i^T(H_{V_i})\cdot X d\mu_{V_i} \to \int S^T(H_V) \cdot X d\mu_V.
\]

Let us prove \eqref{eqn:H-lsc}.  Using the above, standard layer-cake formulas, and Fatou, we have
\begin{align*}
\int |S^T(H_V)|^2 \phi d\mu_V &= \int_0^\infty \int_{\{\phi > s\}} |S^T(H_V)|^2 d\mu_V ds \\
&\leq \int_0^\infty \liminf_{i} \int_{\{\phi > s\}} |S^T(H_{i})|^2 d\mu_{i} ds \\
&\leq \liminf_{i} \int |S^T(H_{i})|^2 \phi d\mu_{i}  = \liminf_{i} \int |S^T(H_{i})|^2 \phi_{i'} d\mu_{i} . \qedhere
\end{align*}
\end{proof}

\section{Free-boundary Brakke flows}

\subsection{Definition, basic properties}

For the duration of this paper we adopt the notation that $\tau \equiv -t$.

\begin{definition}
Given an open $U \subset \R^N$, and an interval $I \subset \R$, let $\cBT(S, U, I)$ be the set of non-negative functions
\[
\{ \phi \in C^1(U \times I, \R_+) : D\phi(\cdot, t) \in \cT(S, U) \quad \forall t \in I \} .
\]
This is our set of admissible test functions.  When there is no ambiguity we may omit the $U$ or $I$.
\end{definition}

\begin{definition}\label{def:fb-brakke}
Let $I \subset \R$ be some interval.  We say a collection $(\mu(t))_{t \in I}$ of Radon measures is an $n$-dimensional \emph{Brakke flow in $U$ with free-boundary in $S$} if the following holds:
\begin{enumerate}
\item[A)] for a.e. $t \in I$, $\mu(t) = \mu_{V(t)}$ for some integral $n$-varifold $V(t)$ with free-boundary in $S \subset U$, having
\[
S^T(H_{V(t)}) \in L^2_{loc}(U, \R^n; \mu_{V(t)});
\]

\item[B)] for any finite interval $[a, b] \subset I$, and every $\phi \in \cBT(S, U, [a, b])$, we have that the mapping
\[
t \mapsto \int -|S^T(H_{V(t)})|^2 \phi + S^T(H_{V(t)}) \cdot D\phi + \partial_t \phi d\mu_{V(t)}
\]
(defined for a.e. $t$) is measurable on $[a, b]$, and
\begin{equation}\label{eqn:fb-evolution}
\int \phi(\cdot, b) d\mu(b) - \int \phi(\cdot, a) d\mu(a) \leq \int_a^b \int -|S^T(H)|^2 \phi + S^T(H) \cdot D\phi + \partial_t \phi d\mu(t) dt .
\end{equation}
\end{enumerate}
Here $I$ is the \emph{time interval of definition}.

Given a domain $\Omega$, a free-boundary Brakke flow is \emph{supported in $\Omega$} if it additionally satisfies:
\begin{enumerate}
\item[C)] for a.e. $t$, $\spt\mu(t) \subset \overline{\Omega}$.
\end{enumerate}

If $S = \partial\Omega$, and $(\mu(t))_t$ is supported in $\Omega$, for short we will sometimes say $(\mu(t))_t$ is a \emph{free-boundary Brakke flow in $\Omega \subset U$}.
\end{definition}

Since the free-boundary condition only sees vector fields parallel to $S$, the natural curvature becomes $S^T(H)$ instead of $H$.  As demonstrated in the introductory example, in general $S^\perp(H)$ is poorly behaved in limits.  On the other hand, Proposition \ref{prop:brakke-reflect} illustrates why we can expect definition \eqref{eqn:fb-evolution} to still admit good regularity.

As far as the dynamics are concerned we are effectively ``modding out'' by $S$.  For example, the measures $t \mapsto \haus^{N-1} \llcorner S$ form a vacuous free-boundary Brakke flow, and adding $\haus^{N-1}\llcorner S$ to any free-boundary Brakke flow gives a free-boundary Brakke flow with identical dynamics.

\begin{remark}
Any classical mean curvature flow with free-boundary $(M^n_t)_t$ is a free-boundary Brakke flow, by taking $\mu(t) = \haus^n\llcorner M_t$.
\end{remark}

\begin{remark}
Any (free-boundary) Brakke flow on $t \in [a, b)$ or $t \in [a, b]$ can trivially be extended to times $[a, \infty)$ by setting $\mu(t) = 0$ for all $t \geq b$ (resp. $t > b$).
\end{remark}

\begin{remark}[A remark on scaling]\label{rem:scaling}
As with smooth mean curvature flows, any (free-boundary) Brakke flow can be translated or parabolically dilated in spacetime to obtain a new Brakke flow.  Precisely, if we let $\mu_{x, \lambda}$ be the rescaled measure
\[
\mu_{x, \lambda}(A) = \lambda^{-n} \mu(x + \lambda A),
\]
then the family
\[
t \mapsto \mu_{x_0, \lambda}(\lambda^2 (t - t_0))
\]
will be a Brakke flow centered at $X_0 = (x_0, t_0)$, and parabolically dilated by $1/\lambda$, with free-boundary in $S/\lambda \subset U/\lambda$.
\end{remark}

\begin{remark}
If $\phi \in \cBT \cap C^2$, then the evolution equation \eqref{eqn:fb-evolution} can be written as
\[
\int \phi(\cdot, b) d\mu(b) - \int \phi(\cdot, a) d\mu(a) \leq \int_a^b \int - |S^T(H)|^2 \phi + (\partial_t - tr_V D^2 ) \phi d\mu(t) dt .
\]
This follows from Proposition \ref{prop:fb-finite-var}.
\end{remark}

Analogous to smooth flows, we will often work with Brakke flows as objects in spacetime.
\begin{definition}
Let $(\mu(t))_t$ be a (free-boundary) Brakke flow.  The \emph{spacetime support} is the closure (in spacetime) of
\[
\cup_t (\spt \mu(t) \times \{t\}) \subset \R^{N,1},
\]
taken over all times of definition.  The \emph{spacetime track} $\mcfM$ of $(\mu(t))_t$ is the spacetime support with associated multiplicities.

We shall often find it convenient to identify a (free-boundary) Brakke flow with its track $\mcfM$.  For example:
\begin{align*}
\mcfM_i \to \mcfM \quad &\text{means}\quad \mu_i(t) \to \mu(t) \quad \forall t \in I \\
\dilD_{1/\lambda}(\mcfM - X_0) \quad &\text{means}\quad \text{the flow dilated, translated in Remark \ref{rem:scaling}} \\
\int_{\mcfM(t)} f \quad &\text{means} \quad \int f d\mu(t) \\
\mcfM(t) \quad &\text{means}\quad \mu(t) .
\end{align*}
\end{definition}

When $S$ is a hyperplane, our definition reduces to the standard notion of Brakke flow.
\begin{prop}\label{prop:brakke-reflect}
Let $(\mu(t))_t$ be a Brakke flow with free-boundary in $P \subset \R^N$, for some hyperplane $P$, and write $A : \R^N \to \R^N$ for reflection about $P$.  Then the measures $\tilde \mu(t) = \mu(t) + A_\sharp \mu(t)$ define a Brakke flow in $\R^N$.
\end{prop}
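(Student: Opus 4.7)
The strategy is the classical symmetrization trick. Given an arbitrary test function $\psi \in C^1_c(\R^N \times [a,b], \R_+)$ (no tangency condition, since $\tilde\mu$ is meant to be an ordinary Brakke flow), I would form the symmetrized function
\[
\phi(x,t) = \psi(x,t) + \psi(A(x),t).
\]
First I would verify that $\phi \in \cBT(P, \R^N, [a,b])$: clearly $\phi \in C^1$, is non-negative, and has compact support; and for $x \in P$ we have $A(x) = x$, so $D\phi(x) = D\psi(x) + A^T D\psi(x) = 2\proj_{TP}(D\psi(x)) \in T_xP$, so $D\phi(\cdot,t) \in \cT(P, \R^N)$. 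Thus $\phi$ is an admissible test function for the free-boundary flow $(\mu(t))_t$.

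Next I would establish the following identities/inequalities relating $\phi$-integrals against $\mu$ to $\psi$-integrals against $\tilde\mu$. Pulling back by $A$ and using $\tilde\mu = \mu + A_\sharp \mu$, one gets immediately
\[
\int \phi(\cdot,t)\, d\mu(t) = \int \psi(\cdot,t)\, d\tilde\mu(t), \qquad \int \partial_t\phi\, d\mu(t) = \int \partial_t\psi\, d\tilde\mu(t).
\]
For the linear first-variation term, the identity $D\phi(x) = D\psi(x) + A(D\psi)(A(x))$ together with the change of variables $x \mapsto A(x)$ gives
\[
\int S^T(H_V) \cdot D\phi\, d\mu = \int S^T(H_V)\cdot D\psi\, d\mu + \int (A\circ S^T(H_V)\circ A)\cdot D\psi\, dA_\sharp\mu,
\]
which, by the first-variation formula \eqref{eqn:reflect-varifold-var} of Proposition \ref{prop:reflect-varifold}, equals $-\delta\tilde V(D\psi) = \int H_{\tilde V}\cdot D\psi\, d\tilde\mu$. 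For the quadratic term, the same change of variables yields
\[
\int \phi\, |S^T(H_V)|^2\, d\mu = \int \psi\, |S^T(H_V)|^2\, d\mu + \int \psi\, |S^T(H_V)\circ A|^2\, dA_\sharp\mu,
\]
which by the $L^2$ estimate \eqref{eqn:reflect-varifold-L2} in Proposition \ref{prop:reflect-varifold} (applied slicewise with weight $\psi$ via a standard truncation/layer-cake argument) is at least $\int \psi\, |H_{\tilde V}|^2\, d\tilde\mu$.

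Combining, the free-boundary Brakke inequality \eqref{eqn:fb-evolution} applied with test function $\phi$ reads
\[
\int \psi\, d\tilde\mu(b) - \int \psi\, d\tilde\mu(a) \leq \int_a^b \int \bigl(-|S^T(H_V)|^2\phi + S^T(H_V)\cdot D\phi + \partial_t\phi\bigr)\, d\mu(t)\, dt,
\]
and the three comparisons above show the right-hand side is bounded by
\[
\int_a^b \int \bigl(-|H_{\tilde V}|^2\psi + H_{\tilde V}\cdot D\psi + \partial_t\psi\bigr)\, d\tilde\mu(t)\, dt,
\]
which is precisely the Brakke inequality for $\tilde\mu$. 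Part A of Definition \ref{def:fb-brakke} for $\tilde\mu$ (integrality, $L^2$ generalized mean curvature, no generalized boundary) is immediate from Proposition \ref{prop:reflect-varifold} applied at a.e.\ $t$, and the measurability in $t$ of the integrand in Part B follows from that of the original flow together with the explicit formulas above. The only subtle point is the direction of the inequality for the quadratic term, but this is exactly the content of \eqref{eqn:reflect-varifold-L2}, so there is no real obstacle.
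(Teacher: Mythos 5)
Your proof is correct and follows essentially the same route as the paper: symmetrize the test function ($\phi = \psi + \psi\circ A$, which is admissible in $\cBT(P)$), plug into the free-boundary Brakke inequality for $\mu$, change variables via $A$ to convert to integrals against $\tilde\mu$, and then invoke Proposition \ref{prop:reflect-varifold} (equations \eqref{eqn:reflect-varifold-var} and \eqref{eqn:reflect-varifold-L2}) to identify the linear term with $H_{\tilde V}\cdot D\psi$ and to bound the quadratic term in the correct direction. The paper's proof is simply a terser rendering of the same calculation.
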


\begin{proof}
Given any $\phi \in C^1_c(\R^N, \R_+)$, we have that $\phi + \phi \circ A \in \cBT(P)$.  Therefore, given any $(a, b)$ in the time interval of definition, we have
\begin{align*}
&\int \phi(\cdot, b) d\tilde\mu(b) - \int \phi(\cdot, a) d\tilde\mu(a) \\
&\leq \int_a^b \int -|S^T(H)|^2 \phi + S^T(H) \cdot D\phi + \partial_t \phi \,\, d\mu(t) \\
&\quad + \int_a^b \int -|S^T(H)\circ A|^2 \phi + (A \circ S^T(H) \circ A) \cdot D\phi + \partial_t \phi \,\, d(A_\sharp \mu(t)) \\
&\leq \int_a^b \int -|\tilde H|^2 \phi + \tilde H \cdot D\phi + \partial_t \phi d\tilde\mu(t) ,
\end{align*}
having used equations \eqref{eqn:reflect-varifold-var}, \eqref{eqn:reflect-varifold-L2}.
\end{proof}

\subsection{Mass bounds}

In choosing an appropriate cut-off we follow Buckland \cite{buckland}.
\begin{definition}\label{defn:mass-cutoff}
Take the \emph{cut-off radius} $\kappa$ to be any number $\leq r_S / c_0(n)$.  Let
\[
\eta(s) = (1-s)^4_+,
\]
and define the \emph{mass cut-off function}, at radius $\kappa$, to be
\begin{align*}
\phi_{S,\kappa}(x, t) &= \eta \left( (\kappa^2/\tau)^{3/4} \frac{|x|^2 - \alpha \tau}{\kappa^2} \right) \\
&= \left(1 - \kappa^{-2} (\kappa^2/\tau)^{3/4} (|x|^2 - \alpha \tau) \right)^4_+
\end{align*}
for some $\alpha = \alpha(n) \geq 1/2$ to be determined later.  Similarly, define the \emph{reflected} cutoff function
\[
\tilde \phi_{S, \kappa}(x, t) = \phi_{S, \kappa}(\tilde x, t).
\]
\end{definition}
There are a couple reasons for making these definitions.  First, $\phi + \tilde \phi \in \cT(S)$.  Second, the extra factor of $\tau^{-3/4}$ allows us to kill errors from the reflected $\tilde x$.  Third, we wish to keep the cutoff parabolic-scale-invariant

\begin{remark}\label{rem:cutoff-support}

If $\tau \leq \kappa^2 \left( (1 + \alpha) \cdot 20^2 \right)^{-4/3}$, then $\spt \phi_{S,\kappa} \subset B_{\kappa/20}(0)$.

If additionally $0 \in B_{\kappa/10}(S)$, then we have
\begin{equation}
\spt(\phi_{S,\kappa} + \tilde\phi_{S,\kappa}) \subset B_{\kappa/2}(0).
\end{equation}
This follows using Lemma \ref{lem:refl-bounds}, since we have for any $x \in \spt \phi_S$:
\begin{align*}
|\tilde x| 
&\leq |\tilde x - \refl(x)| + |\refl(x) - \refl(0)| + |\refl(0)| \\
&\leq \frac{c_1 (\kappa/5)^2}{r_S} + \kappa/20 + \kappa/10 \\
&\leq \kappa/2,
\end{align*}
where $\refl$ is the affine reflection about $T_{\zeta(0)}S$.  In particular,

Conversely, 
\begin{equation}
\phi_{S,\kappa} \geq 1/16 \text{ on } \{ x : |x|^2 \leq (\tau/\kappa^2)^{3/4} \kappa^2/2 \}.
\end{equation}
\end{remark}

We will use $\phi$ and $\tilde \phi$ as barriers.  We require certain conditions on $t$ and $x$ for $\tilde \phi$ to be an appropriate subsolution.
\begin{theorem}\label{thm:cutoff-evolution}
There is a $\beta_0(n)$ so that if $\kappa \leq r_S/c_1$, and $0 \in B_{\kappa/10}(S)$, then
\begin{equation}\label{eqn:cutoff-evolution}
(\partial_t - tr_L D^2) \tilde\phi_{S, \kappa}(x, t) \leq 0, \quad (\partial_t - tr_L D^2) \phi_{S, \kappa}(x, t) \leq 0
\end{equation}
for all $t \in (-\beta_0^2(n)\kappa^2, 0)$, and $x$ satisfying $d(x) \leq 10|\tilde x|$.

In particular, \eqref{eqn:cutoff-evolution} holds in the following cases:
\begin{enumerate}
\item[A)] $x \in \overline{\Omega}$; or

\item[B)] $0 \in S$ and $x$ arbitrary; or

\item[C)] $0 \in T_yS$ for some fixed $y \in S$, and $|x - y| \leq \min\{ |y|/10, r_S/c_1 \}$.
\end{enumerate}

\end{theorem}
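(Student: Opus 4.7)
The plan is to apply the chain rule to $\phi_{S,\kappa} = \eta(u)$ and $\tilde\phi_{S,\kappa} = \eta(\tilde u)$, where
\[
u(x,t) = \tau^{-3/4}\kappa^{-1/2}(|x|^2 - \alpha\tau), \qquad \tilde u(x,t) = u(\tilde x, t),
\]
producing, for $v \in \{u, \tilde u\}$, the identity
\[
(\partial_t - \mathrm{tr}_L D^2)\eta(v) = \eta'(v)\bigl(\partial_t v - \mathrm{tr}_L D^2 v\bigr) - \eta''(v)|D^T v|^2.
\]
Since $\eta(s) = (1-s)^4_+$ satisfies $\eta' \leq 0$ and $\eta'' \geq 0$ everywhere, the $\eta''$ term is automatically nonpositive, and the entire problem reduces to showing $\partial_t v - \mathrm{tr}_L D^2 v \geq 0$ on $\{\eta'(v) < 0\}$.

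For the unreflected cutoff a direct calculation gives
\[
\partial_t u - \mathrm{tr}_L D^2 u = \tau^{-3/4}\kappa^{-1/2}\Bigl(\tfrac{3|x|^2}{4\tau} + \tfrac{\alpha}{4} - 2n\Bigr),
\]
nonnegative once $\alpha \geq 8n$, so I would fix $\alpha(n) = 8n + 1$ here, independent of $S$ and $\kappa$. For the reflected cutoff the analogous calculation gives
\[
\partial_t \tilde u - \mathrm{tr}_L D^2 \tilde u = \tau^{-3/4}\kappa^{-1/2}\Bigl(\tfrac{3|\tilde x|^2}{4\tau} + \tfrac{\alpha}{4} - \mathrm{tr}_L D^2 |\tilde x|^2\Bigr),
\]
and Lemma~\ref{lem:refl-bounds} replaces $\mathrm{tr}_L D^2 |\tilde x|^2$ by $2n$ at the cost of an error bounded by $c(n)(d(x) + |\tilde x|)/r_S$. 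Under the hypothesis $d(x) \leq 10|\tilde x|$ this error is at most $11c(n)|\tilde x|/r_S$, and writing $|\tilde x|/r_S = (|\tilde x|/\sqrt{\tau})(\sqrt{\tau}/r_S)$ followed by AM--GM splits it as
\[
\frac{11c(n)|\tilde x|}{r_S} \;\leq\; \frac{3|\tilde x|^2}{4\tau} + \frac{C(n)\tau}{r_S^2}.
\]
The quadratic piece absorbs into the main quadratic, leaving a residual $C(n)\tau/r_S^2 \leq C(n)\beta_0^2/c_1^2$ under the assumptions $r_S \geq c_1\kappa$ and $\tau \leq \beta_0^2\kappa^2$, which can be made strictly less than $\alpha/4 - 2n = 1/4$ by choosing $\beta_0(n)$ small enough.

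For the ``In particular'' list I would verify $d(x) \leq 10|\tilde x|$ in each of A)--C) by a flat-model computation followed by a curvature perturbation through Lemma~\ref{lem:refl-bounds}. In case B), $\zeta(0) = 0$, so the affine reflection about $T_0 S$ fixes $0$ and hence preserves $|\cdot|$; combined with the estimate $|\tilde x - \mathrm{refl}(x)| \leq c_1|x|^2/r_S$ and the support bound $|x| \leq \kappa/20 \ll r_S/c_0$ from Remark~\ref{rem:cutoff-support}, this gives $|\tilde x| \geq |x|/2 \geq d(x)/2$. Case C) reduces to B) by the observation that $0 \in T_y S$ forces the affine reflection about $T_y S$ to fix $0$, with $|x - y| \leq |y|/10$ used to propagate the bound. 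For case A) ($x \in \overline\Omega$), in the flat model with $\Omega$ on the inward side of $\nu_S$ and $0 \in \overline\Omega$ (the natural setting when applying the cutoff at a point of a flow supported in $\overline\Omega$), one has $\tilde x_n = h + d(x) \geq d(x)$ directly, and the curved case follows by the same Lemma~\ref{lem:refl-bounds} perturbation.

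The main obstacle is the absorption in the second paragraph: a linear-in-$|\tilde x|$ reflection error of size $|\tilde x|/r_S$ must be dominated by a quadratic $|\tilde x|^2/\tau$ main term, and the balance only works because the $\tau^{-3/4}$ factor in $u$ was engineered into the cutoff precisely for this purpose; without it the $\partial_t \tilde u$ calculation would produce only a constant-order coefficient, and the curvature error could not be absorbed. Once this is in place, the case verifications reduce to routine applications of Lemma~\ref{lem:refl-bounds}.
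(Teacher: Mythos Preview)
Your proof is correct, but the absorption step is unnecessarily roundabout, and your diagnosis of why the $\tau^{-3/4}$ exponent is present is off.

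The paper does not use AM--GM. Instead, once $\beta_0$ is chosen so that Remark~\ref{rem:cutoff-support} gives $|\tilde x|\leq \kappa/20$ on the support of $\tilde\phi_{S,\kappa}$, the hypothesis $\kappa\leq r_S/c_1$ immediately implies $|\tilde x|/r_S\leq 1/(20c_1)$. Combined with $d(x)\leq 10|\tilde x|$, the entire curvature error $c(n)(d(x)+|\tilde x|)/r_S$ is bounded by a fixed dimensional constant, and one simply chooses $\alpha(n)$ large enough to absorb it. The quadratic term $3|\tilde x|^2/(4\tau)$ is then just dropped. So the paper fixes $\beta_0$ first (to control the support) and then $\alpha$ (to absorb the error); you fix $\alpha=8n+1$ first and then force $\beta_0$ small via AM--GM. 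Both work, but the paper's route is shorter and explains why the definition left $\alpha$ ``to be determined later''. In particular, your claim that ``without [the $\tau^{-3/4}$] the curvature error could not be absorbed'' is not right for \emph{this} theorem: any exponent $\gamma\in(0,1)$ in place of $3/4$ produces both a nonnegative quadratic term and a positive $\alpha$-term, and the support bound alone handles the error. The specific choice $3/4$ is engineered for the monotonicity formula (Theorem~\ref{thm:monotonicity}), where errors of size $|\tilde x|/\tau$ times the Gaussian must be controlled, not for the present lemma.

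For the case verifications, your perturbative approach via Lemma~\ref{lem:refl-bounds} is fine, but the paper's argument for A) and B) is more direct: if $0\in\overline\Omega$ (which you correctly flag as the operative context for A)) and $x\in\overline\Omega$, then $\tilde x$ lies on the other side of $S$, so the segment from $0$ to $\tilde x$ meets $S$ at some $y$, and the triangle inequality gives $|\tilde x|=|\tilde x-y|+|y|\geq d(\tilde x)+d(0)\geq d(x)$ in one line. Case B) is the special instance $y=0$. Only case C) genuinely requires the quadratic estimate from Lemma~\ref{lem:refl-bounds}, and there your sketch and the paper's proof coincide.
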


\begin{proof}
First suppose $|d(x)| \leq 10 |\tilde x|$.  Then choose $\beta_0^2 = \left( (1 + \alpha) \cdot 20^2 \right)^{-4/3}$ as in Remark \ref{rem:cutoff-support}.  We calculate, using Lemma \ref{lem:refl-bounds}, 
\begin{align*}
(\partial_t - tr_L D^2) \tilde \phi_S
&= -\eta' (\kappa^2/\tau)^{3/4} \kappa^{-2} \left( -3/4 |\tilde x|^2/\tau - \alpha/4 + 2n \pm c(n)|\tilde x|/r_S \right) \\
&\leq |\eta'| (\kappa^2/\tau)^{3/2} \kappa^{-2} (-\alpha/4 + 2n \pm c(n)) \\
&\leq 0
\end{align*}
provided $\alpha(n)$ is sufficiently big.  The case of $\phi_S$ follows identically.

Let us demonstrate situations A)-C) imply the required estimate.  Suppose we are in case A) or B).  Let $y$ be the intersection of $S$ with the line segment connecting $0$ with $\tilde x$.  Then
\[
|\tilde x| = |\tilde x - y| + |y - 0| \geq d(\tilde x) + d(0) \geq d(x).
\]

Suppose we are in scenario C).  Then by Lemma \ref{lem:refl-bounds} we have
\begin{align*}
|\tilde x| \geq |\refl(x)| - \frac{2c_1}{r_S}|x - y|^2 \geq |x| - 2|x - y| \geq |x| - |y|/5, 
\end{align*}
And
\[
|y| \leq |x| + |x - y| \leq |x| + |y|/10.
\]
This proves $|d(x)| \leq |x| + |\tilde x| \leq 10 |\tilde x|$.
\end{proof}

From the above Theorem we deduce that mass at later times is controlled by mass at earlier times.
\begin{corollary}\label{cor:mass-bounds}
Let $(\mu(t))_{t \geq 0}$ be a Brakke flow with free-boundary in $S \subset U$, and $r_S/c_1 \geq \kappa$.

There is a constant $c(n)$ so that whenever $B_{R(t)}(z) \subset U$, where $R(t) := r + \kappa + c(n)t/\kappa$, then we have the estimate
\[
\mu(t)(B_r(z)) \leq c(n)^{1 + t/\kappa^2} \mu(0)(B_{R(t)}(z)).
\]
\end{corollary}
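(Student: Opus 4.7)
The proof is a standard iteration/covering argument using the test function $\phi_{S,\kappa} + \tilde\phi_{S,\kappa}$ of Definition \ref{defn:mass-cutoff} in the Brakke evolution inequality \eqref{eqn:fb-evolution}, with Theorem \ref{thm:cutoff-evolution} providing the subsolution property.

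\emph{Single-step estimate.} Fix $t_1 - t_0 = \beta_0^2\kappa^2/2$ and $t^* = t_1 + \beta_0^2\kappa^2/2$. For $y \in S$, the translate
\[
\psi_y(x,t) := \phi_{S,\kappa}(x-y,\, t-t^*) + \phi_{S,\kappa}(\tilde x - y,\, t-t^*)
\]
is $C^2$, non-negative, compactly supported, and has $D_x \psi_y$ tangent to $S$ along $S$ (since reflection about $S$ commutes with translation by $y \in S$, the usual symmetry argument applies), so $\psi_y \in \cBT(S)$. Plugging $\psi_y$ into the $C^2$-form of \eqref{eqn:fb-evolution}, and applying Theorem \ref{thm:cutoff-evolution} (case B, after translating $y$ to $0$) to drop $(\partial_t - \mathrm{tr}_V D^2)\psi_y \leq 0$ and $-|S^T H|^2 \psi_y \leq 0$, we obtain
\[
\int \psi_y(\cdot, t_1)\, d\mu(t_1) \leq \int \psi_y(\cdot, t_0)\, d\mu(t_0).
\]
With $\tau_1 = \beta_0^2\kappa^2/2$ and $\tau_0 = \beta_0^2\kappa^2$, Remark \ref{rem:cutoff-support} yields $\psi_y(\cdot,t_1) \geq 1/16$ on $B_{\epsilon(n)\kappa}(y)$ and $\spt \psi_y(\cdot,t_0) \subset B_{\kappa/2}(y)$, so
\[
\mu(t_1)(B_{\epsilon\kappa}(y)) \leq c(n)\,\mu(t_0)(B_{\kappa/2}(y)) \qquad (y \in S).
\]
For $y$ with $d(y) \geq \kappa$, the analogous bound is obtained using only the unreflected $\phi_{S,\kappa}(\cdot-y,\,\cdot-t^*)$: its support lies in $B_{\kappa/20}(y)$, hence avoids $S$, so the tangency condition is vacuous; and $(\partial_t - \mathrm{tr}_V D^2) \phi_{S,\kappa} \leq 0$ holds directly once $\alpha \geq 8n$, without any reflection error.

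\emph{Cover and iterate.} Cover $B_r(z)$ by balls $B_{\epsilon\kappa}(y_i)$ on an $\epsilon\kappa$-lattice, choosing each $y_i$ on $S$ whenever it falls within $\kappa/2$ of $S$ (shifted at most $\kappa$) and otherwise with $d(y_i) \geq \kappa$; the enclosing balls $B_{\kappa/2}(y_i)$ have bounded overlap multiplicity $N(n)$. Summing the single-step estimate gives
\[
\mu(t_1)(B_r(z)) \leq c(n)\, \mu(t_0)(B_{r + \kappa}(z)).
\]
Iterating this bound $K = \lceil 2t/(\beta_0^2 \kappa^2) \rceil = O(1 + t/\kappa^2)$ times over $[0, t]$ grows the radius by $K\kappa \leq \kappa + c(n) t/\kappa$ per iteration and multiplies by $c(n)^K \leq c(n)^{1 + t/\kappa^2}$, matching the claimed estimate (after renaming $c(n)$). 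The hypothesis $B_{R(t)}(z) \subset U$ guarantees that every intermediate enclosing ball lies in $U$, so each iterate of the Brakke inequality is valid.

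\emph{Main obstacle.} The non-routine content is the construction and admissibility of the reflection-symmetric cutoff $\psi_y$: one must verify that its spatial derivative is tangent to $S$ along $S$ (which forces the symmetric $\phi + \phi \circ \widetilde{(\cdot)}$ form) and that Theorem \ref{thm:cutoff-evolution} applies uniformly for $y \in S$; the cover-and-iterate step is standard bookkeeping once one correctly splits into near-$S$ versus far-from-$S$ regimes.
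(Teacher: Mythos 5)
Your single-step boundary estimate follows the paper closely: you plug $\phi_{S,\kappa}(\cdot-y)+\phi_{S,\kappa}(\tilde\cdot-y)$ centered at $y\in S$ into \eqref{eqn:fb-evolution}, invoke Theorem~\ref{thm:cutoff-evolution}~B), and read off a mass estimate on a small ball from Remark~\ref{rem:cutoff-support}. Two comments there. First, the claimed reason that $\psi_y\in\cBT(S)$ --- ``reflection about $S$ commutes with translation by $y\in S$'' --- is not correct for curved $S$ (reflection does not commute with translation); what is actually true, and what the paper uses implicitly, is that for \emph{any} smooth $g$ the combination $g(x)+g(\tilde x)$ has gradient tangent to $S$ along $S$, since the differential of $x\mapsto\tilde x$ at a point of $S$ is the linear reflection across the tangent plane, which fixes tangential directions and flips normal ones. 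The conclusion is right even though the justification is not. Second, these single-step ingredients and the iteration scheme match the paper.

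The genuine gap is in the covering. Your boundary estimate produces a lower-bound ball of radius $\epsilon(n)\kappa$ around each $y\in S$, where $\epsilon(n)\approx\beta_0^{3/4}/2$, a small dimensional constant; these balls therefore cover at best a tube of thickness $O(\epsilon\kappa)$ around $S$. Your interior estimate, using $\phi_{S,\kappa}(\cdot-y)$, requires the support of that cutoff --- a ball of radius $\approx\kappa/20$ at $\tau_0=\beta_0^2\kappa^2$ --- to avoid $S$, i.e. it needs $d(y)\gtrsim\kappa/20$. But $\epsilon\kappa\ll\kappa/20$, so the annulus $\{\,\epsilon\kappa\lesssim d\lesssim\kappa/20\,\}$ is covered by neither family. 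The ``shift lattice points within $\kappa/2$ of $S$ onto $S$'' device makes this worse rather than better: a point shifted a distance up to $\kappa/2$ onto $S$ carries a lower-bound ball of radius only $\epsilon\kappa$, so it does not cover the region it was supposed to cover. The root cause is that $\phi_{S,\kappa}$, because of the $\tau^{-3/4}$ factor built in to kill reflection errors, has a lower-bound radius that degenerates like $\tau^{3/8}$ while its support radius stays of size $\kappa/20$; away from $S$ the reflection errors are absent, so this factor is pure loss. The paper's proof fixes exactly this by introducing a \emph{separate} scale-invariant interior barrier $\psi(x,t)=(1-(\gamma\kappa/8)^{-2}(|x|^2-2n\tau))^4_+$, with support $B_{\gamma\kappa/2}(0)$ and lower-bound ball $B_{\gamma\kappa/16}(0)$ both at scale $\gamma\kappa=O(\beta_0^{3/4}\kappa)$, matching the boundary tube thickness. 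To repair your argument you should either use that barrier, or parabolically rescale $\phi_{S,\kappa}$ (i.e. use $\phi_{S,\lambda\kappa}$ with $\lambda$ a small dimensional constant) so that its support radius is comparable to $\epsilon\kappa$; either way, the time-step constant changes only by a dimensional factor and the final estimate is unaffected.
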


\begin{proof}
Since $\phi + \tilde \phi \subset \cBT(S, U, (-1, 0))$, we can plug $\phi + \tilde \phi$ into the evolution equation \eqref{eqn:fb-evolution}, then apply Theorem \ref{thm:cutoff-evolution} B) and Remark \ref{rem:cutoff-support} to deduce there is a $\gamma(n) := \beta_0^{3/4} \kappa/2$, so that
\[
\mu(t + s)(B_{\gamma \kappa}(y)) \leq c(n) \mu(t)(B_{\kappa}(y))
\]
for every $y \in S$, $t \geq 0$, and $s \in [0, \beta_0^2\kappa/2]$.

Choose a Vitali cover of $S \cap B_r(z)$ by balls of radius $\gamma \kappa$, then the balls of radius $\kappa$ will have overlap with multiplicity $\leq c(\gamma, n) = c(n)$.  Therefore, we deduce
\begin{equation}\label{eqn:S-ball-est}
\mu(t + s)(B_{\gamma \kappa/2}(S) \cap B_r(z)) \leq c(n) \mu(t)(B_{\kappa}(S) \cap B_{r+\kappa}(z)),
\end{equation}
for $t$ and $s$ as before.

The interior mass bound is similar.  Define the barrier function
\[
\psi(x, t) = \left( 1 - (\gamma \kappa/8)^{-2}(|x|^2 - 2n\tau) \right)^4_+.
\]
One can check directly that $(\partial_t - tr_V D^2)\psi \leq 0$, and
\[
\psi(x, 0) \geq 1/16 \quad \text{ on } B_{\gamma\kappa/16}(0),
\]
and
\[
\spt \psi \subset \subset B_{\gamma \kappa/2}(0) \quad \text{and} \quad |\psi| \leq 2 \quad \text{when } \tau \in [0, \frac{\gamma^2\kappa^2}{20n}].
\]

So $\psi$ is admissible when $d(0, S) \geq \gamma\kappa/2$ and $\tau$ is restricted as above.  We therefore have
\begin{equation}\label{eqn:inside-ball-est}
\mu(t + s)(B_{\gamma\kappa/16}(y)) \leq c \mu(t)(B_{\gamma\kappa/2}(y)) 
\end{equation}
for every $y \not\in B_{\gamma\kappa/2}(S)$, and $t \geq 0$, and $s \in [0, \frac{\gamma^2\kappa^2}{20n}]$.

Apply \eqref{eqn:inside-ball-est} to a Vitali cover of $B_r(z) \setminus B_{\gamma\kappa/2}(S)$ like above, and use \eqref{eqn:S-ball-est}, to deduce
\[
\mu(t + s)(B_r(z)) \leq c(n) \mu(t) (B_{r+\kappa}(z)) \quad \forall s \in [0, \kappa^2/c(n)].
\]
for some fixed constant $c(n)$.  Now iterate this in $t$ to obtain the required estimate. 
\end{proof}

\begin{corollary}
Let $(\mu(t))_{t \geq 0}$ be a Brakke flow with free-boundary in $S \subset \R^N$, and $r_S/c_1 \geq \kappa$.  If $\spt \mu(0) \subset B_R(0)$, then
\[
\spt\mu(t) \subset B_{R + \kappa + c(n)t/\kappa}(0) \quad \forall t  .
\]

Hence, $\mu(t)$ can only move a finite distance in finite time.
\end{corollary}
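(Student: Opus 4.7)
The plan is to deduce this directly from the preceding Corollary \ref{cor:mass-bounds}, specialized to the global setting $U = \R^N$. Since $U = \R^N$, the containment hypothesis $B_{R(t)}(z) \subset U$ is vacuously satisfied for every $z$ and every $r > 0$, so for all $t \geq 0$ we have the unrestricted mass bound
\[
\mu(t)(B_r(z)) \leq c(n)^{1 + t/\kappa^2}\, \mu(0)\bigl(B_{r + \kappa + c(n)t/\kappa}(z)\bigr).
\]

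To establish the support inclusion, fix $t \geq 0$ and pick any $z$ with $|z| > R + \kappa + c(n)t/\kappa$. Set $r := |z| - R - \kappa - c(n)t/\kappa > 0$. By the triangle inequality every $y \in B_{r+\kappa+c(n)t/\kappa}(z)$ satisfies $|y| > |z| - (r+\kappa+c(n)t/\kappa) = R$, so this ball is disjoint from $\overline{B_R(0)} \supset \spt\mu(0)$, forcing the right-hand side above to vanish. Hence $\mu(t)(B_r(z)) = 0$, and therefore $z \notin \spt\mu(t)$. This yields $\spt\mu(t) \subset \overline{B_{R + \kappa + c(n)t/\kappa}(0)}$, which is the asserted inclusion (at worst after a trivial enlargement of the constant $c(n)$ if one insists on an open ball). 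The concluding observation that mass moves only finitely far in finite time is then immediate, since the radius $R + \kappa + c(n)t/\kappa$ is finite for every finite $t$. There is no real obstacle here — the entire content of the result is packaged into the mass-propagation corollary, and this statement is purely a covering/triangle-inequality unpacking of what that estimate says when the initial support is compact and there are no local restrictions from $U$.
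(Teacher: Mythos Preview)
Your argument is correct and is exactly the approach the paper takes: the paper's proof is the single line ``Apply Corollary \ref{cor:mass-bounds} to balls outside the support of $\mu$,'' and you have simply spelled out that application explicitly via the triangle inequality.
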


\begin{proof}
Apply Corollary \ref{cor:mass-bounds} to balls outside the support of $\mu$.
\end{proof}

\subsection{Compactness}

The following semi-decreasing property is crucial for (free-boundary) Brakke flows.
\begin{prop}[compare Theorem 7.2 in Ilmanen \cite{ilmanen:elliptic-reg}]\label{thm:semi-decreasing}
Let $(\mu(t))_{t \geq -1}$ be a Brakke flow with free-boundary in $S \subset U$, satisfying
\[
\mu(t)(K) \leq C_1(K) \quad \forall t \geq -1
\]
for every compact $K \subset U$.  Then the following holds:

\begin{enumerate}
\item[A)] Take $\phi \in \cBT(S, U) \cap C^2$ independent of time.  Then there is a constant $C_\phi(\phi, C_1(\spt \phi))$ so that
\[
t \mapsto \mu(t)(\phi) - C_\phi t
\]
is decreasing in $t$.

\item[B)] The left-/right-limits of $\mu(t)$ exist at every $s \geq -1$, and satisfy
\[
\lim_{t \to s_-} \mu(t) \geq \mu(s) \geq \lim_{t \to s_+} \mu(t).
\]

\item[C)] For every $K \subset\subset U$, there is a compact $\tilde K$ (depending only on $K$, $U$, $|b - a|$) and constant $D_1(K) = D_1(C_1(\tilde K), |b - a|)$, so that $K \subset \tilde K \subset U$ and
\[
\int_a^b \int_K |S^T(H)|^2 d\mu(t) dt \leq D_1(K) .
\]
\end{enumerate}
\end{prop}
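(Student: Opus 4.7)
The plan is to plug various time-independent $C^2$ test functions into the $C^2$ reformulation of the Brakke evolution inequality (the remark just after Definition \ref{def:fb-brakke}),
\[
\int\phi\,d\mu(b)-\int\phi\,d\mu(a)\leq\int_a^b\int \bigl(-|S^T(H)|^2\phi - tr_V D^2\phi\bigr)\,d\mu(t)\,dt,
\]
and to use the hypothesis $\mu(t)(K)\leq C_1(K)$ to absorb the Hessian term uniformly. For Part A), since $\phi$ is time-independent we have $\partial_t\phi=0$, and $|tr_V D^2\phi|\leq n|D^2\phi|_0$; discarding the manifestly nonpositive term $-|S^T(H)|^2\phi$ gives
\[
\mu(b)(\phi)-\mu(a)(\phi)\leq n|D^2\phi|_0 C_1(\spt\phi)(b-a),
\]
which is exactly the monotonicity of $t\mapsto\mu(t)(\phi)-C_\phi t$ with $C_\phi:=n|D^2\phi|_0 C_1(\spt\phi)$.

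Part B) then follows formally. The monotonicity in A) forces the one-sided limits $\lim_{t\to s_\pm}\mu(t)(\phi)$ to exist for every admissible $\phi$. The inequality $\mu(s)(\phi)-\mu(t)(\phi)\leq C_\phi(s-t)$ for $t<s$ passes to the limit as $t\to s_-$ to give $\lim_{t\to s_-}\mu(t)(\phi)\geq\mu(s)(\phi)$, and analogously for the right limit. Proposition \ref{prop:fb-density-theorems}~A) guarantees density of the admissible class in $C^0_c(U,\R)$, and the uniform local mass bound then upgrades these pointwise-in-$\phi$ limits and inequalities to weak convergence and measure ordering of Radon measures as claimed in B).

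For Part C), pick $\tilde K$ with $K\subset\subset\tilde K\subset\subset U$ and construct $\phi\in\cBT(S,U)\cap C^2$ with $\spt\phi\subset\tilde K$, $\phi\geq 1_K$, and $|\phi|_0+|D^2\phi|_0$ bounded by a constant depending only on $n,K,\tilde K,S$. Plugging into the $C^2$ evolution inequality and rearranging,
\[
\int_a^b\int|S^T(H)|^2\phi\,d\mu(t)\,dt\leq\mu(a)(\phi)+(b-a)\,n|D^2\phi|_0 C_1(\tilde K),
\]
after dropping the nonpositive $-\mu(b)(\phi)$. Combined with $\phi\geq 1_K$ and $\mu(a)(\phi)\leq|\phi|_0 C_1(\tilde K)$, this yields the desired bound with $D_1(K)$ of the advertised form.

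The only geometrically substantive step is the construction of the cutoff $\phi$ in Part C): it must satisfy simultaneously the Neumann condition $D\phi\cdot\nu_S=0$ on $S$ and uniform $C^2$-bounds. I would build it from a standard smooth bump $\psi\geq 1_K$ supported in $\tilde K$ by, inside the tubular neighborhood $\{d<r_S/2\}$, replacing $\psi$ with an even-in-the-signed-normal-coordinate modification obtained via the inverse projection map $\Phi$ of the preliminaries; the $C^2$-bounds then follow from the estimates \eqref{eqn:reflect-bounds}. Everything else is a mechanical manipulation of the $C^2$ form of the Brakke inequality.
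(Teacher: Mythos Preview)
Your argument is correct. The route differs from the paper's in one technical maneuver: you invoke the $C^2$ form of the Brakke inequality (the remark after Definition \ref{def:fb-brakke}) and bound $|tr_V D^2\phi|$ directly, whereas the paper works with the $C^1$ form, absorbs $S^T(H)\cdot D\phi$ into $\tfrac12|S^T(H)|^2\phi$ via Young's inequality, and controls the residual $|D\phi|^2/\phi$ by the standard fact (Ilmanen's Lemma 6.6) that a nonnegative $C^2$ function has $|D\phi|^2\leq C\phi$. Both require $\phi\in C^2$, and both yield A) and C) in one stroke once $\phi$ is chosen $\equiv 1$ on $K$; your version is arguably a bit more direct, while the paper's keeps the sharp constant $\tfrac12$ in front of the $L^2$-curvature term. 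Part B) is handled identically in both. You are also more explicit than the paper about how to build the Neumann cutoff $\phi\in\cBT(S,U)\cap C^2$; the paper simply asserts its existence.
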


\begin{proof}
Given any $b > a \geq -1$, we have
\begin{align*}
\int \phi d\mu(b) - \int \phi d\mu(a) 
&\leq \int_a^b \int -|S^T(H)|^2\phi + S^T(H) \cdot D\phi d\mu(t) dt \\
&\leq \int_a^b \int -\frac{1}{2} |S^T(H)|^2 + 2\frac{|D\phi|^2}{\phi} d\mu(t) dt \\
&\leq -\frac{1}{2} \int_a^b |S^T(H)|^2 + C(\phi) C_1(\spt \phi) |d - c| .
\end{align*}
The last bound follows from \cite{ilmanen:elliptic-reg} Lemma 6.6.  This shows parts A) and C), choosing $\phi$ to be $\equiv 1$ on $K$.  Part B) follows by applying part A) to every $\phi$ in a dense subset of $C^0_C(U, \R_+)$, and using the compactness of Radon measures.
\end{proof}

The semi-continuity of Proposition \ref{thm:varifold-compactness} allows us to prove a compactness Theorem for free-boundary Brakke flows.
\begin{theorem}\label{thm:brakke-compactness}
Let $(\mu_i(t))_{t \geq -1}$ be a sequence of Brakke flows with free-boundary in $S_i \subset U_i$.  Let $\inf r_{S_i} > 0$, $U_i \to U$, and $S_i \to S$ in $C^3_{loc}$.  Suppose $\mu_i(t)$ satisfy
\[
\sup_i \sup_t \mu_i(t)(K) \leq C_1(K) < \infty \quad \forall \text{ compact } K \subset U
\]

Then there is a $n$-Brakke flow $(\mu(t))_{t \geq -1}$ with free-boundary in $S \subset U$, and a subsequence $i'$, so that
\[
\mu_{i'}(t) \to \mu(t)
\]
as Radon measures for every $t\geq -1$.  For a.e. $t \geq -1$, there is a further subsequence $i''$ (depending on $t$), so that
\[
V_{i''}(t) \to V(t)
\]
as varifolds.  Here $V_{i''}(t)$ and $V(t)$ are the integral $n$-varifolds, with free-boundary in $S_{i''}$, $S$, associated to $\mu_{i''}(t)$, $\mu(t)$ (resp.).
\end{theorem}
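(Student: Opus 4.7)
The plan is to mimic Ilmanen's compactness proof for ordinary Brakke flows (Theorem 7.1 in \cite{ilmanen:elliptic-reg}), using Theorem \ref{thm:varifold-compactness} to handle the free-boundary piece.

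First, I would extract a measure-theoretic subsequence. By Proposition \ref{thm:semi-decreasing} A), applied to each $\mu_i$, for every time-independent $\phi \in \cBT(S_i, U_i) \cap C^2$ the map $t \mapsto \mu_i(t)(\phi) - C_\phi t$ is decreasing, with $C_\phi$ depending only on $\phi$ and the uniform mass bound $C_1$. Since $S_i \to S$ in $C^3$, for any $\phi \in C^\infty_c(U,\R)$ with $D\phi \in \cT(S, U)$ I can produce comparable $\phi_i$ with $D\phi_i \in \cT(S_i, U_i)$ by composing with a diffeomorphism $F_i : U \to U_i$ with $F_i(S) = S_i$ and $F_i \to \mathrm{Id}$ in $C^2$ (built from the inverse projection maps $\Phi_i$ of Section 2.2). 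Applying Helly's selection principle to the monotone functions $t \mapsto \mu_i(t)(\phi_k) - C_{\phi_k} t$ over a countable family $\{\phi_k\}$ dense in $C^0_c(U)$ (such a family exists by Proposition \ref{prop:fb-density-theorems} A)) gives, via a diagonal argument, a subsequence (not relabeled) so that $\mu_i(t) \to \mu(t)$ as Radon measures for every $t \geq -1$.

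Next I would establish pointwise-in-$t$ varifold convergence for a.e.\ $t$. By Proposition \ref{thm:semi-decreasing} C) applied uniformly in $i$, $\int_{-1}^{T}\int_K |S_i^T(H_i)|^2\, d\mu_i(t)\, dt \leq D_1(K,T)$ for every compact $K \subset U$ and $T < \infty$. Fatou's lemma then guarantees that for a.e. $t$, and for every compact $K$, $\liminf_i \int_K |S_i^T(H_i)|^2 d\mu_i(t) < \infty$. Fix such a $t$ and, working along a $t$-dependent further subsequence $i''$ that achieves the liminf simultaneously on an exhaustion of $U$, Theorem \ref{thm:varifold-compactness} extracts an integral $n$-varifold $V'(t)$ with free-boundary in $S$ and $V_{i''}(t) \to V'(t)$ as varifolds. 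Since $\mu_{V_{i''}(t)} = \mu_{i''}(t) \to \mu(t)$ as Radon measures, $\mu_{V'(t)} = \mu(t)$, so $V'(t)$ is precisely the integral varifold $V(t)$ determined by $\mu(t)$.

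Finally I would verify the Brakke inequality \eqref{eqn:fb-evolution} for the limit. Given $\phi \in \cBT(S, U, [a,b])$, construct $\phi_i \in \cBT(S_i, U_i, [a,b])$ with $\phi_i \to \phi$ in $C^1$ by the pullback trick above. Writing
\[
G_i(t) = \int -|S_i^T(H_i)|^2 \phi_i + S_i^T(H_i)\cdot D\phi_i + \partial_t\phi_i\, d\mu_i(t),
\]
the Cauchy--Schwarz bound $|S_i^T(H_i) \cdot D\phi_i| \leq \tfrac{1}{2}|S_i^T(H_i)|^2\phi_i + 2|D\phi_i|^2/\phi_i$ (valid since $\phi_i$ is a smooth nonnegative test function, cf.\ Lemma 6.6 in \cite{ilmanen:elliptic-reg}) gives a uniform upper bound $G_i(t) \leq C(\phi, C_1)$. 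The LHS of the evolution inequality passes to the limit by Step 1 and $\phi_i \to \phi$ in $C^0$. For the RHS, reverse Fatou gives $\limsup_i \int_a^b G_i\, dt \leq \int_a^b \limsup_i G_i\, dt$, and for a.e. $t$ the semi-continuity inequality \eqref{eqn:H-lsc} applied along subsequences where varifold convergence holds forces $\limsup_i G_i(t) \leq G(t)$, the analogous expression for $\mu(t)$. Chaining these inequalities yields \eqref{eqn:fb-evolution} for $(\mu(t))_t$; measurability of the integrand follows from that of the $G_i$ and the upper bound.

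The main obstacle is the last step: one must simultaneously pass to a limit in a quadratic-in-$H$ quantity with the correct one-sided inequality, along a single subsequence good for all times. The key structural input that makes this possible is the one-sided bound \eqref{eqn:H-lsc} from the varifold compactness theorem, which supplies $\limsup$-control of $-|S_i^T(H_i)|^2 \phi + S_i^T(H_i)\cdot D\phi$ (rather than mere $\liminf$-control of $\int |H|^2$), together with the Cauchy--Schwarz upper bound that validates the reverse Fatou step.
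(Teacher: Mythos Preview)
Your proof is correct and follows essentially the same approach as the paper: Helly's selection principle on a countable dense family of admissible test functions for measure convergence at every time, Fatou plus Theorem \ref{thm:varifold-compactness} for the a.e.-in-$t$ varifold structure, and the one-sided curvature inequality \eqref{eqn:H-lsc} combined with a Fatou-type argument to pass the Brakke inequality to the limit. The only place your write-up is thinner than the paper is the measurability claim at the end: measurability of $t \mapsto G(t)$ does not follow from measurability of the $G_i$, since $G$ is defined intrinsically from $V(t)$ rather than as a pointwise limit; the paper handles this by observing that $t \mapsto \mu(t)$ is continuous on a set $B$ of full measure (from the monotone limits), and then uses \eqref{eqn:H-lsc} along $t_i \to t$ in $B$ to show $G$ is upper-semi-continuous on $B$, hence measurable.
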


\begin{proof}
Let $\mathcal C$ be a countable subset of $\{ \phi \in C^2_c(U, \R_+) : D\phi \in \cT(S) \}$, which is dense in $C^0_c(U, \R_+)$, see Proposition \ref{prop:fb-density-theorems}.  For any $\phi \in \mathcal C$, there is a constant $C_\phi = C(\phi, C_1(\spt\phi))$ so that
\[
L_{\phi, i}(t) = \mu_i(t)(\phi) - C_\phi t
\]
is decreasing in $t$.  We can assume $L_{\phi, i}(0) \leq C_\phi$ also.

By Helly's selection principle and diagonalization we can pass to a subsequence, also denoted $i$, so that for each $\phi \in \mathcal C$ there is a decreasing function $L_\phi(t)$ satisfying
\[
L_{\phi, i}(t) \to L_\phi(t) \quad \forall t \geq -1.
\]

In other words, $\lim_i \mu_i(t)(\phi)$ exists every $t \geq -1$, and $\phi \in \mathcal C$.  Since $\mathcal C$ is dense in $C^0_c(U, \R_+)$, the usual compactness of Radon measures implies there exist a collection of Radon measures $(\mu(t))_{t \geq -1}$ so that
\[
\mu_i(t) \to \mu(t)
\]
as Radon measures for every $t \geq -1$.  Moreover, since $L_\phi(t) \equiv \mu(t)(\phi)$ is decreasing for each $\phi \in \mathcal C$, by the same arguments of Proposition \ref{thm:semi-decreasing} we have that $t \mapsto \mu(t)$ is continuous at a set of times of full measure.

We show $(\mu(t))_t$ is a Brakke flow with free-boundary.  Fix an $-1 \leq a < b$.  If we let
\[
f_K(t) = \liminf_i \int_K 1 + |S^T(H_i)|^2 d\mu_i(t), 
\]
Then by Proposition \ref{thm:semi-decreasing} and Fatou's lemma we have that for a.e. $t \in (a, b)$, $f_K(t) < \infty$ for every compact $K \subset U$.

For a.e. $t \geq -1$, we can pass to a subsequence $i'$ (depending on $t$) so that the following is satisfied:
\begin{enumerate}
\item[A)] $\mu_{i'}(t) = \mu_{V_{i'}}$ for some integral varifold $V_{i'}$, having free-boundary in $S_{i'}$;

\item[B)] $\sup_{i'} \int_K 1 + |S^T(H_{i'})|^2 d\mu_{i'}(t) < \infty$ for every compact $K$.
\end{enumerate}

By Theorem \ref{thm:varifold-compactness}, we can pass to a further subsequence, to obtain convergence $V_{i'} \to V$ to some integral varifold, with free-boundary in $S \subset U$, and $S^T(H_V) \in L^2_{loc}(U, \R^N; \mu_V)$.  Since $\mu_i(t) \to \mu(t) = \mu_V$ independent of $i'$, we see $V$ is determined independently of sequence $i'$ also.

Take $\phi \in \cBT(S, U, [a, b])$, for some fixed $-1 \leq a < b < \infty$.  Let $B \subset (-1, \infty)$ be the set of times at which $t \mapsto \mu(t)$ is continuous, and at which $\mu(t) = \mu_{V(t)}$ as above.  From the previous paragraphs, $B$ has full measure.  Given any $t_i \to t$, with $t_i, t \in B \cap [a, b]$, we have by definition of $B$ that $\mu(t_i) \to \mu(t)$, and hence the associated varifolds $V(t_i) \to V(t)$ also.

Using \eqref{eqn:H-lsc}, and convergence $\mu(t_i) \to \mu(t)$, we deduce
\[
\int -|S^T(H)|^2 \phi + S^T(H) \cdot D\phi + \partial_t \phi d\mu(t) \geq \limsup_i \int -|S^T(H)|^2 \phi + S^T(H) \cdot D\phi + \partial_t \phi d\mu(t_i)
\]
Therefore
\[
t \mapsto \int -|S^T(H)|^2 \phi + S^T(H) \cdot \phi + \partial_t \phi d\mu(t)
\]
is upper-semi-continuous on $B \cap [a, b]$, and hence measurable on $[a, b]$.

We show $\mu(t)$ satisfies the inequality \eqref{eqn:fb-evolution}.  Fix $\phi \in \cBT(S, U, [a, b]) \cap C^2$.  We can choose a sequence $\phi_i \in \cBT(S_i, U_i, [a, b]) \cap C^2$ so that $\phi_i \to \phi$ in $C^2$, and $\spt \phi_i(\cdot, t) \subset \spt \phi(\cdot, t)$ for every $t$.  Since the $\phi_i$, $\phi$ are uniformly bounded in $C^2$, we can find a fixed function $\psi \in C^0_c(U, \R_+)$, so that
\[
\frac{|D\phi|^2}{\phi} + \frac{|D\phi_i|^2}{\phi_i} \leq \psi \quad \forall i.
\]
And therefore, for each $i$ we have
\[
|S^T(H_i)|^2\phi_i - H_i \cdot D\phi_i + \psi \geq 0.
\]

For each $t$, we have by construction that
\[
\int \partial_t \phi_i d\mu_i(t) \to \int \partial_t \phi d\mu(t), \quad \int \psi d\mu_i(t) \to \int \psi d\mu(t),
\]
and clearly each term above is uniformly bounded in $t$.

Using the dominated convergence theorem, and Fatou's lemma, we have
\begin{align}
&\int \phi d\mu(a) - \int \phi d\mu(b) + \int_a^b \int \psi d\mu(t) dt \nonumber\\
&= \lim_i \left( \int \phi_i d\mu_i(a) - \int \phi_i d\mu_i(b) + \int_a^b \int \psi d\mu_i(t) dt \right) \nonumber\\
&\geq \liminf_i \left( \int_a^b \int |S^T(H_i)|^2 - H_i \cdot D\phi_i + \psi  d\mu_i(t) dt - \int_a^b \int \partial_t \phi_i d\mu_i(t) dt \right) \nonumber\\
&\geq \int_a^b \left( \liminf_i \int |S^T(H_i)|^2 - H_i \cdot D\phi_i + \psi d\mu_i(t) \right) dt - \int_a^b \int \partial_t \phi d\mu(t) dt \nonumber\\
&=: \int_a^b M_\phi(t)  dt - \int_a^b \int\partial_t \phi d\mu(t) dt \label{eqn:brakke-ineq-compact}
\end{align}

As before, for a.e. $t \in (a, b)$, we can choose a subsequence $i'$ (depending on $t$), so that
\[
M_\phi(t) = \lim_{i'} \int |S^T(H_{i'})|^2 \phi_{i'} - H_{i'} \cdot D\phi_{i'} + \psi d\mu_{i'}(t),
\]
and $V_{i'}(t) \to V(t)$ as free-boundary varifolds.  Semi-continuity \eqref{eqn:H-lsc}, and convergence $\mu_i(t) \to \mu(t)$, then implies
\[
M_\phi(t) \geq \int |S^T(H_V)|^2 \phi - H_V \cdot D\phi + \psi d\mu_V ,
\]
independently of subsequence $i'$.

Plugging this back into \eqref{eqn:brakke-ineq-compact}, we find that $\mu(t)$ satisfies inequality \eqref{eqn:fb-evolution} for every $\phi \in \cBT(S, U, [a, b]) \cap C^2$.  Since $\cBT(S, U, [a, b]) \cap C^2$ is dense in $\cBT(S, U, [a, b])$, we deduce $(\mu(t))_{t \geq -1}$ is a Brakke flow with free-boundary in $S \subset U$.
\end{proof}

\section{Monotonicity}

We prove a monotonicity for the following reflected and truncated Gaussian.
\begin{definition}
Define the \emph{Gaussian heat kernel}
\[
\rho_S(x, t) = (4\pi\tau)^{-n/2}e^{-\frac{|x|^2}{4\tau}},
\]
and define the relfected heat kernel $\tilde \rho_S(x, t) = \rho_S(\tilde x, t)$.  Recall $\tau \equiv -t$.

Define the \emph{reflected, truncated head kernel} to be
\begin{equation}
f_{S, \kappa} = \rho_S \phi_{S, \kappa} + \tilde \rho_S \tilde \phi_{S, \kappa},
\end{equation}
where $\phi_{S,\kappa}$, $\tilde \phi_{S,\kappa}$ as in Definition \ref{defn:mass-cutoff}.  Notice that by construction $f \in \cT(S)$.

The above definition gives us the appropriate Gaussian density centered at the spacetime origin.  Given an $X_0 = (x_0, t_0)$, define the \emph{recentered} reflected, truncated heat kernel is the function
\begin{align*}
f_{S, \kappa, (x_0, t_0)}(x, t) &= \phi(x - x_0, t - t_0) \rho(x - x_0, t - t_0) \\
& \quad + \phi(\tilde x - x_0, t - t_0) \rho(\tilde x - x_0, t - t_0),
\end{align*}
so that $f_{S, \kappa, (0, 0)} \equiv f_{S, \kappa}$.
\end{definition}

\begin{theorem}\label{thm:monotonicity}
Let $(\mu(t))_{t \geq -1}$ be a free-boundary Brakke flow supported in $\Omega \subset U$ (so, $S = \partial\Omega$).  Suppose $0 \in B_{\kappa/10}(S) \cap \bar \Omega$, and $d(0, \partial U) \geq \kappa$.

There are $\tau_0(\kappa, n)$, $A(\kappa, n)$ so that if $\kappa \leq r_S/c_1$, then
\[
t \mapsto e^{A(-t)^{1/4}} \int f_{S, \kappa} d\mu(t) + A M (-t)
\]
is decreasing in $t \in [-\tau_0, 0]$.  Here $M$ is a any constant bounding
\[
M \geq \mu(-\tau_0)(\phi_{S, \kappa}(\cdot, -\tau_0) + \tilde \phi_{S, \kappa}(\cdot, -\tau_0)) .
\]
\end{theorem}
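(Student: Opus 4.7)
The strategy is to plug $\phi = f_{S,\kappa}$ into Brakke's inequality and perform a Huisken-style completion of the square, tracking the perturbations coming from the cutoff and from the reflection map. I first verify admissibility of $f = f_{S,\kappa}$: it is $C^3$ (since $\eta \in C^3$), supported in $B_{\kappa/2}(0) \subset U$ for $\tau \leq \tau_0$ small by Remark \ref{rem:cutoff-support} and the hypothesis $d(0, \partial U) \geq \kappa$, and its spatial gradient lies in $\cT(S, U)$. The last condition I check pointwise: at $x \in S$ one has $\tilde x = x$ and the differential of the reflection map equals $I - 2\nu_S\nu_S^T$, which flips the sign of $D_{\nu_S}\rho$ and $D_{\nu_S}\phi$ simultaneously, so $D_{\nu_S}(\tilde\rho\tilde\phi) = -D_{\nu_S}(\rho\phi)$ and the sum cancels. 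Consequently $f \in \cBT(S, U, [-\tau_0, 0]) \cap C^2$, and the remark following Definition \ref{def:fb-brakke} provides the trace form of Brakke:
\[
\int f\,d\mu(b) - \int f\,d\mu(a) \leq \int_a^b\!\!\int \bigl[-|S^T H|^2 f + (\partial_t - \mathrm{tr}_V D^2) f\bigr] d\mu\,dt.
\]

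The next step is to compute $(\partial_t - \mathrm{tr}_V D^2) f$ pointwise, splitting $f = \rho\phi + \tilde\rho\tilde\phi$. For each summand Leibniz produces three pieces: a ``Gaussian'' piece $\phi(\partial_t - \mathrm{tr}_V D^2)\rho$, a ``cutoff'' piece $\rho(\partial_t - \mathrm{tr}_V D^2)\phi$ (which is $\leq 0$ by Theorem \ref{thm:cutoff-evolution}), and a cross term $-2\langle D^T\rho, D^T\phi\rangle$ controlled via the explicit gradient bound $|D\phi| \lesssim |x|\tau^{-3/4}\kappa^{-1/2}$ on $\spt\phi$. Combining the Gaussian piece with $-|H|^2\rho\phi$ and completing the square against $-x^\perp/(2\tau)$ yields the standard Huisken nonpositive term $-\rho\phi|H + x^\perp/(2\tau)|^2$ plus a remainder of order $c(n)\tau^{-1}\rho\phi$.

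The delicate contribution is from the reflected summand $\tilde\rho\tilde\phi$. Expanding $(\partial_t - \mathrm{tr}_V D^2)\tilde\rho$ by the chain rule and invoking Lemma \ref{lem:refl-bounds} ($|D_v \tilde x - \tilde v| \leq c\,d(x)/r_S$, $|D_v D_v \tilde x| \leq c/r_S$, and the trace bound on $|\tilde x|^2$) shows that $(\partial_t - \mathrm{tr}_V D^2)\tilde\rho$ equals its ``reflected'' analogue (the same expression with $V$ replaced by its linear reflection $\tilde V$ and evaluated at $\tilde x$) up to errors of size $(d(x) + |\tilde x|)/r_S$ times $\tilde\rho/\tau$. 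On $\spt f$ the cutoff enforces $|x|, |\tilde x| \lesssim \kappa^{1/4}\tau^{3/8}$, so these reflection errors are bounded pointwise by $C(n, \kappa)\tau^{-3/4}\tilde\rho\tilde\phi + C(n,\kappa)\kappa^{-2}\mathbf{1}_{\spt f}$. Assembling and using Corollary \ref{cor:mass-bounds} to bound $\mu(t)(\spt f) \leq c(n) M$ for $t \in [-\tau_0, 0]$ gives
\[
\frac{d}{dt}\int f\,d\mu \leq -\int f\bigl|H + \tfrac{x^\perp}{2\tau}\bigr|^2 d\mu + C_1(n, \kappa)\tau^{-3/4}\!\int f\,d\mu + C_2(n, \kappa) M.
\]

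Finally I differentiate $g(t) := e^{A\tau^{1/4}}\int f\,d\mu + AM\tau$ and substitute (dropping the negative Huisken term):
\[
\tfrac{d}{dt} g \leq e^{A\tau^{1/4}}\bigl[(C_1 - A/4)\tau^{-3/4}\!\int f\,d\mu + C_2 M\bigr] - A M.
\]
Choosing $A = A(n, \kappa) \geq 4 C_1$ and $\tau_0 = \tau_0(n, \kappa, A)$ small enough that $e^{A\tau_0^{1/4}} \leq 2$ makes both brackets nonpositive, so $\tfrac{d}{dt}g \leq 0$. The main obstacle is the third step, bounding the reflection errors: this is where the nonstandard scaling $(\kappa^2/\tau)^{3/4}$ in the cutoff is essential. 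It confines $\spt f$ to the parabolic region $|x| \lesssim \tau^{3/8}$, so that reflection errors of order $1/r_S$ activating through the factor $|\tilde x|$ produce errors of borderline order $\tau^{3/8}/r_S$, whose integrated time derivative scales like $\tau^{-3/4}$ — precisely the rate that the correction $e^{A\tau^{1/4}}$ (with $\tfrac{d}{d\tau}e^{A\tau^{1/4}} \sim \tau^{-3/4}e^{A\tau^{1/4}}$) is able to absorb. A slower cutoff would yield $\tau^{-1/2}$ or worse errors beyond the reach of any polynomial-in-$\sqrt\tau$ correction.
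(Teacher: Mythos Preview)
Your overall strategy is right, but two steps contain genuine gaps.

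\textbf{The completion of the square does not work in the trace form as you describe.} The Huisken identity is
\[
(\partial_t + 2S^T(H)\cdot D + \mathrm{tr}_V D^2)\rho = \bigl[-\bigl|S^T(H) + \tfrac{x^\perp}{2\tau}\bigr|^2 + |S^T(H)|^2\bigr]\rho,
\]
and all three pieces of the operator are needed. Your trace-form operator $(\partial_t - \mathrm{tr}_V D^2)$ differs from this by $2S^T(H)\cdot D\rho + 2\,\mathrm{tr}_V D^2\rho$, so after combining with $-|S^T H|^2\rho\phi$ you are left with an uncontrolled term $\rho\phi\, S^T(H)\cdot x^\perp/\tau$ --- this is not a remainder of size $c(n)\tau^{-1}\rho\phi$, and in any case a $\tau^{-1}$ remainder is non-integrable and would kill your absorption step. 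The paper avoids this by staying in the non-trace form and using that $\phi D\rho + \tilde\phi D\tilde\rho \in \cT(S)$ (\emph{not} $\phi D\rho$ alone) to integrate by parts; this legitimately inserts the missing $\mathrm{tr}_V D^2\rho$ term at the cost of a $\mathrm{tr}_V D^2\phi$ term, which is exactly what Theorem~\ref{thm:cutoff-evolution} is designed to control.

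\textbf{Your reflection-error bookkeeping is too coarse.} The actual error in $(\partial_t + \mathrm{tr}_V D^2)\tilde\rho$ is $O(|\tilde x|/\tau + |\tilde x|^3/\tau^2)\tilde\rho$; the cubic piece comes from squaring the first-order reflection error in the Hessian of $|\tilde x|^2$. Your support bound $|\tilde x| \lesssim \kappa^{1/4}\tau^{3/8}$ alone turns the cubic term into $\tau^{-7/8}\tilde\rho$, too singular for $e^{A\tau^{1/4}}$ to absorb. The paper obtains the sharp $\tau^{-3/4}$ rate by a different mechanism: the Young-type inequality $\tilde\rho\,|\tilde x|/\tau \leq 1 + (\tilde\rho\,|\tilde x|/\tau)^\alpha$ with $\alpha = 1 + 1/(2n+2)$, followed by the bound $(\tilde\rho\,|\tilde x|/\tau)^\alpha \leq c(n)\tau^{-3/4}\tilde\rho$, which uses the Gaussian decay of $\tilde\rho$ rather than the cutoff support. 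The same argument handles $|\tilde x|^3/\tau^2$ because it has identical parabolic scaling. The additive ``$1$'' is then paired with $\tilde\phi$ and controlled by $M$ via the fact that $\int(\phi + \tilde\phi)\,d\mu(t)$ is non-increasing (Theorem~\ref{thm:cutoff-evolution}), not via a separate mass bound on $\spt f$.
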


\begin{proof}
In the following we write $g = O(f)$ to mean $|g| \leq c(n, \kappa) |f|$.  For $\mu_V$-a.e. $x$, write
\begin{align*}
\pi^T, \pi^\perp \quad &\text{for the linear pojections onto}\quad T_x V, (T_x V)^\perp, \\
\tilde \pi^T, \tilde \pi^\perp \quad &\text{for the linear pojections onto the reflected spaces} \quad \widetilde{T_x V}, \widetilde{(T_x V)^\perp}.
\end{align*}
Since both $0$ and $\spt\mcfM$ lie to one side of the barrier $S = \partial \Omega$, we have $d(x) = O(|\tilde x|)$.

Let us pick an ON basis $e_i$ of $T_x V$.  By direct computation we have
\[
D_i \tilde \rho = \frac{-<\tilde e_i, \tilde x>}{2\tau} \tilde \rho + O(\frac{|\tilde x|^2}{\tau}) \tilde \rho,
\]
and therefore
\[
S^T(H_V) \cdot D \tilde\rho = -\frac{<\widetilde{S^T(H_V)}, \tilde x>}{2\tau} \tilde\rho + O(\frac{|\tilde x|^2}{\tau} |S^T(H_V)|) \tilde\rho.
\]
We used the trivial relation $<e_i, Y> = <\tilde e_i, \widetilde{Y}>$.

By direct computation, we have
\[
\sum_i (D_i|\tilde x|^2)^2 = 4 |\tilde \pi^T(\tilde x)|^2 + O(|\tilde x|^2 d) , \quad \sum_i D_i D_i |\tilde x|^2 = 2n + O(|\tilde x|) .
\]

Therefore we have
\[
(\partial_t + tr D^2)\tilde\rho = -\frac{|\tilde \pi^\perp(\tilde x)|^2}{4\tau^2} \tilde \rho + O(|\tilde x|/\tau + |\tilde x|^3/\tau^2)\tilde\rho .
\]

By a result of Brakke \cite{brakke}, $H_V = \pi^\perp(H_V)$ $\mu_V$-almost everywhere.  Since $T_x V \subset T_x S$ at $\mu_V$-a.e. $x \in S$, we deduce
\[
S^T(H_V) = \pi^\perp(S^T(H_V)) \quad \text{$\mu_V$-a.e. $x$}.
\]
Using this and the above calculations, we have at $\mu_V$-a.e. $x$:
\begin{align*}
&(\partial_t + 2 S^T(H_V) \cdot D + tr_V D^2) \tilde\rho  \\
&= \left[ -\frac{|\tilde \pi^\perp(\tilde x)|^2}{4\tau^2} - \frac{<\tilde \pi^\perp(\tilde x), \widetilde{S^T(H_V)}>}{\tau} \right] \tilde \rho + O(|\tilde x|/\tau + |\tilde x|^3/\tau^2 + |S^T(H)| |\tilde x|^2/\tau) \tilde\rho \\
&\leq \left[ - \left| \frac{\tilde \pi^\perp(\tilde x)}{2\tau} + \widetilde{S^T(H)} \right|^2 + |S^T(H)|^2 \right] \tilde \rho + O(|\tilde x|/\tau + |\tilde x|^3/\tau^2 + |S^T(H)| |\tilde x|^2/\tau) \tilde\rho .
\end{align*}

Without much rigmarole we have also
\[
(\partial_t + 2 S^T(H_V) \cdot D + tr_V D^2) \rho = \left[ - \left| \frac{\pi^\perp(x)}{2\tau} + S^T(H)\right|^2 + |S^T(H)|^2 \right] \rho .
\]

Since both $\rho + \tilde \rho$ and $\phi + \tilde \phi$ lie in $\cT(S)$, we have
\begin{align*}
&\int \phi tr_V D^2 \rho - \rho tr_V D^2 \phi + \tilde \phi tr_V D^2\tilde\rho - \tilde\rho tr_V D^2 \tilde \phi d\mu_V \\
&\quad= \int - S^T(H) \cdot D f + 2 S^T(H) \cdot (\phi D \rho + \tilde\phi D \tilde\rho) d\mu_V
\end{align*}

We now calculate, using Theorem \ref{thm:cutoff-evolution}, and ensuring $\tau_0 \leq \beta_0(n) \kappa^2$, 
\begin{align*}
\overline{D_t} \int f \, d\mu(t) 
&\leq \int S^T(H) \cdot Df + \partial_t f - |S^T(H)|^2 f \,d\mu(t) \\
&\leq \int S^T(H) \cdot Df - 2 S^T(H) \cdot (\phi D \rho + \tilde\phi D\tilde \rho)  \, d\mu(t) \\
&\quad + \int (tr_V D^2 \tilde\phi)\tilde\rho - \phi(tr_V D^2\tilde\rho) + (tr_V D^2\phi) \rho - \phi(tr_V D^2\rho) \,d\mu(t)\\
&\quad - \int \left| \frac{\tilde \pi^\perp(\tilde x)}{2\tau} + \widetilde{S^T(H)} \right|^2 \tilde\phi\tilde\rho + \left| \frac{\pi^\perp(x)}{2\tau} + S^T(H) \right|^2 \phi\rho \,d\mu(t) \\
&\quad + C \int \tilde\phi \tilde\rho (|\tilde x|/\tau + |\tilde x|^3/\tau^2 + |S^T(H)| |\tilde x|^2/\tau)  \,d\mu(t).
\end{align*}
Here $C = C(\kappa, n)$, and $\overline{D_t}$ represents the upper-derivative in the sense of $\limsup$s of difference quotients.

We do some subcalculations.  First,
\begin{align*}
&C \frac{|\tilde x|^2}{\tau} |\widetilde{S^T(H)}| - \left|\frac{\tilde \pi^\perp(\tilde x)}{2\tau} + \widetilde{S^T(H)} \right|^2 \\
&\leq C \frac{|\tilde x|^2}{\tau} \left( \left|\frac{\tilde \pi^\perp(\tilde x)}{2\tau} + \widetilde{S^T(H)} \right| + \frac{|\tilde x|}{2\tau} \right) - \left| \frac{\tilde \pi^\perp (\tilde x)}{2\tau} + \widetilde{S^T(H)} \right|^2 \\
&\leq C\frac{|\tilde x|^3}{\tau^2} + C^2 \frac{|\tilde x|^4}{\tau^2} .
\end{align*}

Second, setting $\alpha := 1 + 1/(2+2n) > 1$ we have
\begin{align*}
\tilde \rho \frac{|\tilde x|}{\tau}
&\leq 1 + \left( \tilde\rho \frac{|\tilde x|}{\tau} \right)^\alpha \\
&= 1 + c \tau^{\frac{n\alpha}{2} - \frac{\alpha}{2}} \left( e^{-\frac{|\tilde x|^2}{4\tau}} \frac{|\tilde x|}{2\sqrt{\tau}} \right)^\alpha \\
&\leq 1 + c(n) \tau^{-\frac{n\alpha}{2} - \frac{\alpha}{2} + \frac{n}{2}} \tilde \rho \\
&= 1 + c(n) \tau^{-3/4} \tilde \rho .
\end{align*}
We used that, for any $\beta, \gamma > 0$, that $y^\beta e^{-y} \leq C(\beta, \gamma) (e^{-y})^{1-\gamma}$.  Precisely the same calculation holds for $|\tilde x|^3/\tau^2$, since the relative powers again differ by $1/2$.

We put the three calculations together, to deduce:
\begin{align*}
\overline{D_t} \int f  \, d\mu(t)
&\leq C \int \tilde\phi \tilde\rho (\frac{|\tilde x|}{\tau} + \frac{|\tilde x|^3}{\tau^2} )  \,d\mu(t)  \\
&\leq C \tau^{-3/4} \int f + C \int \phi + \tilde\phi  \, d\mu(t)  \\
&\leq C\tau^{-3/4} \int f + C \int \phi + \tilde \phi \, d\mu(-\tau_0) \\
&\leq A \tau^{-3/4} \int f + A M . \qedhere
\end{align*}
\end{proof}

Using the above we define a Gaussian density in a neighborhood of the barrier.  For points outside this neighborhood we can use the standard truncated Gaussian density, and by our one-sidedness assumption these will be compatible across the transition region.
\begin{definition}
Let $\mcfM \equiv (\mu(t))_{t \geq -1}$ be a free-boundary Brakke flow supported in $\Omega \subset U$.  For $t_0 > -1$, and $x_0 \in \overline{\Omega}$, define the \emph{reflected Gaussian density} of $\mcfM$ at $X_0 = (x_0, t_0)$ as follows.

For $\kappa$, $r$ satisfying:
\[
\kappa \leq \min\{ r_S/c_1, d(x_0, \partial U)\}, \quad r^2 \leq \min\{ \tau_0(n, \kappa), t_0 + 1 \} ,
\]
we set
\begin{align*}
&\Theta_{refl(S, \kappa)}(\mcfM, X_0, r) \\
&= \left\{ \begin{array}{l l} \int f_{S, \kappa, X_0}(x, t_0 - r^2) d\mu(t_0 - r^2)(x) & x_0 \in B_{\kappa/10}(S) \cap \bar\Omega \\
\int \phi(x - x_0, -r^2) \rho(x - x_0, -r^2) d\mu(t_0 - r^2)(x) & x_0 \in \bar\Omega \setminus B_{\kappa/10}(S) . \end{array} \right. .
\end{align*}
\end{definition}

\begin{remark}
By Remark \ref{rem:cutoff-support}, and our one-sidedness assumption, the two cases agree near $\partial B_{\kappa/10}(S) \cap \bar\Omega$ provided $\tau_0$ is sufficiently small (depending only on $n, \kappa$).
\end{remark}

\begin{remark}
$\Theta_{refl}$ is parabolic scale-invariant, in the sense that
\begin{equation}
\Theta_{refl(S/\lambda, \kappa/\lambda)}(\dilD_{1/\lambda} \mcfM, 0, R) = \Theta_{refl(S, \kappa)}(\mcfM, 0, \lambda R) .
\end{equation}
\end{remark}

\begin{remark}
If $S$ is a plane, then $\Theta_{refl(S,\kappa)}$ is the a truncated Gaussian density of the reflected flow.
\end{remark}

In this new notation, Theorem \ref{thm:monotonicity} implies
\begin{theorem}\label{thm:pretty-mono}
Let $\mcfM \equiv (\mu(t))_{t \geq -1}$ be a free-boundary Brakke flow supported in $\Omega \subset U$.  For any $x_0 \in \bar\Omega$, $t_0 > -1$, and $\kappa \leq \min\{r_S/c_1, d(x_0, \partial U)\}$, we have that
\[
r \mapsto  e^{A\sqrt{r}} \Theta_{refl(S, \kappa)}(\mcfM, X_0, r) + AM r^2
\]
is increasing in $r \in [0, \min(\sqrt{\tau_0}, \sqrt{1 + t_0})]$.  Here $M$ is any constant bounding
\[
M \geq \mu(\min(t_0 - \tau_0, -1))(B_{\kappa/2}(x_0)).
\]

In particular, the limit
\[
\Theta_{refl(S, \kappa)}(\mcfM, X_0) := \lim_{r \to 0} \Theta_{refl(S,\kappa)}(\mcfM, X_0, r)
\]
exists and is finite, for every $t_0 > -1$.
\end{theorem}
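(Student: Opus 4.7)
The theorem is a reformulation of Theorem \ref{thm:monotonicity} under the change of variables $t = t_0 - r^2$, split into the two cases appearing in the definition of $\Theta_{refl}$. In the first case $x_0 \in B_{\kappa/10}(S) \cap \bar\Omega$, I would apply Theorem \ref{thm:monotonicity} to the spacetime-translated flow $\mu'(t)$ obtained by translating $\mu(t+t_0)$ in space by $-x_0$; this is a free-boundary Brakke flow supported in $\Omega - x_0 \subset U - x_0$ with barrier $S - x_0$. All the hypotheses transfer: the origin lies in $B_{\kappa/10}(S-x_0) \cap \overline{\Omega - x_0}$; the reflection regularity scale is translation-invariant so $\kappa \leq r_{S-x_0}/c_1$; and $\kappa \leq d(0, \partial(U-x_0))$. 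Using the identity $f_{S,\kappa,X_0}(x, t_0 - r^2) = f_{S-x_0,\kappa}(x-x_0, -r^2)$ (which follows because the reflection of $x - x_0$ across $S - x_0$ is $\tilde x - x_0$), the decreasing-in-$t$ quantity of Theorem \ref{thm:monotonicity} evaluated at $t = -r^2$ becomes the increasing-in-$r$ quantity claimed here. The mass bound $M$ transfers directly since $\spt(\phi_{S-x_0,\kappa} + \tilde\phi_{S-x_0,\kappa})(\cdot, -\tau_0) \subset B_{\kappa/2}(0)$ by Remark \ref{rem:cutoff-support}.

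In the second case $x_0 \in \bar\Omega \setminus B_{\kappa/10}(S)$, the density is a standard truncated Gaussian without reflection. By Remark \ref{rem:cutoff-support}, $\spt \phi_{S,\kappa}(\cdot - x_0, -r^2) \subset B_{\kappa/20}(x_0)$ is at distance $\geq \kappa/20$ from $S$ for all relevant $r$, so locally the flow is effectively an interior Brakke flow. The proof is then a simplified version of the computation in Theorem \ref{thm:monotonicity}: plug $\phi_{S,\kappa}(\cdot - x_0, \cdot - t_0)\rho(\cdot - x_0, \cdot - t_0)$ into the Brakke inequality \eqref{eqn:fb-evolution} (which requires only that $\phi_{S,\kappa}(\cdot - x_0, \cdot)$ alone be a subsolution, a routine computation since it involves no reflected quantities), and perform the Huisken-style calculation for $(\partial_t + 2 S^T(H) \cdot D + \mathrm{tr}_V D^2)\rho$ without any $\tilde x$ contributions. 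This yields an inequality of the form $\overline{D_t}\int \phi\rho\,d\mu(t) \leq A\tau^{-3/4}\int \phi\rho\,d\mu(t) + AM$, which integrates to the claimed monotonicity.

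Compatibility of the two cases near the transition $d(x_0, S) = \kappa/10$ follows because, for $r$ sufficiently small, the reflected kernel $\phi(\tilde x - x_0, -r^2)\rho(\tilde x - x_0, -r^2)$ vanishes on $\spt\mu(t) \subset \bar\Omega$: the reflection $\tilde x$ of any $x \in \spt\mu$ lies on the opposite side of $S$ from $x_0$ and is separated from $x_0$ by more than $\kappa/20$, so lies outside the cutoff support. Finally, existence and finiteness of $\lim_{r \to 0} \Theta_{refl(S,\kappa)}(\mcfM, X_0, r)$ are immediate: the monotone quantity $e^{A\sqrt r}\Theta + AMr^2$ is non-negative, hence bounded below, and monotone in $r$, so the limit exists.

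The main obstacle is the second case, which requires re-deriving the monotonicity from scratch because Theorem \ref{thm:monotonicity} was stated only for centers in $B_{\kappa/10}(S)$. However the interior computation is strictly simpler than the reflected one --- there are no $\tilde x$ error terms to track, no competition between $\phi$ and $\tilde\phi$, and the kernel $\rho$ satisfies the backward heat equation exactly --- so this amounts to careful bookkeeping (essentially the classical Huisken--Ilmanen truncated monotonicity) rather than introducing genuinely new analytic difficulty.
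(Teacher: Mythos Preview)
Your proposal is correct and follows essentially the same approach as the paper: near the barrier invoke Theorem \ref{thm:monotonicity} after translation, and away from the barrier redo the computation in its simpler interior form. The paper's own proof is a two-sentence pointer back to Theorem \ref{thm:monotonicity}, and your expansion fills in exactly the natural details.

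One small remark: in the interior case the paper notes that $\Theta$ is \emph{actually monotone without error terms}. Indeed, since $\spt(\phi\rho)$ is disjoint from $S$, the test function is admissible outright, and the Huisken computation gives $(\partial_t + 2S^T(H)\cdot D + \mathrm{tr}_V D^2)\rho = (-|\pi^\perp(x)/2\tau + S^T(H)|^2 + |S^T(H)|^2)\rho$ with no $\tilde x$ corrections; combined with $(\partial_t - \mathrm{tr}_V D^2)\phi \leq 0$ one gets $\overline{D_t}\int \phi\rho\,d\mu \leq 0$ exactly. Your weaker inequality $\overline{D_t}\int \phi\rho \leq A\tau^{-3/4}\int\phi\rho + AM$ is of course still true and still integrates to the stated monotonicity, so this is a point of sharpness rather than correctness.
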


\begin{proof}
For $x_0$ near the barrier this is immediate from Theorem \ref{thm:monotonicity}.  Away from the barrier, $\Theta$ is actually monotone in $r$ without error terms, by the computations of Theorem \ref{thm:monotonicity}.
\end{proof}

\begin{remark}
In the following section we will show this limit is independent of (admissible) choice of $\kappa$.
\end{remark}

\section{Tangent flows}

We prove the existence of tangent flows, as self-similar Brakke flows with free-boundary in a plane, and show that the Gaussian density of the reflected tangent flow agrees with the original reflected density at the point.  Throughout this section we take $\mcfM \equiv (\mu(t))_{t \geq -1}$ to be a free-boundary Brakke flow supported in $\Omega \subset U$, and write (as usual) $S = \partial\Omega$.

Let us recall some standard definitions.  The usual \emph{Gaussian density} of a $n$-Brakke flow $\mcfM$, at a point $X_0 = (x_0, t_0)$ and scale $r$, is defined by
\[
\Theta(\mcfM, (x_0, t_0), r) := (4\pi r^2)^{-n/2} \int_{\mcfM(t_0 - r^2)} e^{-\frac{|x - x_0|^2}{4r^2}}.
\]
As proven in \cite{huisken:monotonicity}, $\Theta(\mcfM, X, r)$ is increasing in $r$, and strictly increasing unless $\mcfM$ is a self-shrinker (parabolic cone) centered at $X$.  The \emph{Guassian density at $X$} is the limit
\[
\Theta(\mcfM, X) := \lim_{r \to 0} \Theta(\mcfM, X, r).
\]
If $\mcfM$ is an ancient Brakke flow (so, defined for all negative time), the \emph{Gaussian density at $\infty$} is defined to by
\[
\Theta(\mcfM) := \lim_{r \to \infty} \Theta(\mcfM, 0, r).
\]
One can check directly that if $\Theta(\mcfM) < \infty$, then
\[
\Theta(\mcfM, (x, t), r) \leq \Theta(\mcfM) \quad \forall t \leq 0, \quad \forall r \geq 0.
\]

The \emph{Euclidean density} of an $n$-varifold $V$, at a point $x$ and scale $r$, is defined to be
\[
\Theta_{eucl}(V, x, r) := \frac{\mu_V(B_r(x))}{r^n}
\]
When $V$ is stationary, $\Theta_{eucl}$ is increasing in $r$, and we can define the Euclidean density at a point $x$, or at $\infty$ (resp.) by 
\[
\Theta_{eucl}(V, x) := \lim_{r \to 0} \Theta_{eucl}(V, x, r), \quad \Theta_{eucl}(V) := \lim_{r \to \infty} \Theta_{eucl}(V, 0, r).
\]

We first show that mass of $\mcfM$ is controlled in the dilates.
\begin{lemma}
For any $r > 0$, and $\tau \in (0, \infty)$, then provided $\lambda$ is sufficiently small, we have
\[
(\dilD_{1/\lambda} \mcfM) (-\tau) (B_r(0)) \leq C(r, \tau, \kappa, n) \Theta_{refl(S, \kappa)}(\mcfM, 0), 
\]
provided $\kappa \leq \min\{r_S/c_1, d(0, \partial U)\}$.
\end{lemma}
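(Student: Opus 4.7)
The plan is to compare $(\dilD_{1/\lambda}\mcfM)(-\tau)(B_r(0))$ to the reflected Gaussian density $\Theta_{refl(S,\kappa)}(\mcfM,0,\lambda\sqrt\tau)$ evaluated at the small scale $\lambda\sqrt\tau$, and then invoke the monotonicity formula (Theorem \ref{thm:pretty-mono}) to trade this for the Gaussian density $\Theta_{refl(S,\kappa)}(\mcfM,0)$ at the point.

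By the definition of parabolic rescaling in Remark \ref{rem:scaling},
\[
(\dilD_{1/\lambda}\mcfM)(-\tau)(B_r(0)) = \lambda^{-n}\mu(-\lambda^2\tau)(B_{\lambda r}(0)),
\]
so the task reduces to bounding $\lambda^{-n}\mu(-\lambda^2\tau)(B_{\lambda r}(0))$ by the density. I would do so by lower-bounding the integrand of $\Theta_{refl(S,\kappa)}(\mcfM,0,\lambda\sqrt\tau)$ on $B_{\lambda r}(0)$: the heat kernel satisfies the trivial estimate
\[
\rho_S(x,-\lambda^2\tau) \geq (4\pi\lambda^2\tau)^{-n/2}\, e^{-r^2/(4\tau)}\quad \text{on } B_{\lambda r}(0),
\]
and from the explicit form in Remark \ref{rem:cutoff-support} the cutoff satisfies $\phi_{S,\kappa}(\cdot,-\lambda^2\tau)\geq 1/16$ on $B_{\lambda r}(0)$ whenever $\lambda^2 r^2\leq (\lambda^2\tau/\kappa^2)^{3/4}\kappa^2/2$, i.e.\ for $\lambda\leq \lambda_0(r,\tau,\kappa)$. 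Dropping the non-negative reflected piece $\tilde\phi_{S,\kappa}\tilde\rho_S$ in $f_{S,\kappa}$ (or ignoring it, in the interior case $0\notin B_{\kappa/10}(S)$, when it is absent from the definition), this yields
\[
\Theta_{refl(S,\kappa)}(\mcfM,0,\lambda\sqrt\tau) \geq c(r,\tau,n)\,\lambda^{-n}\mu(-\lambda^2\tau)(B_{\lambda r}(0)).
\]

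Finally I would apply Theorem \ref{thm:pretty-mono}: the quantity $\Psi(r') := e^{A\sqrt{r'}}\Theta_{refl(S,\kappa)}(\mcfM,0,r') + AM(r')^2$ is monotone increasing in $r'$ and, by the very definition of $\Theta_{refl(S,\kappa)}(\mcfM,0)$, converges to this density as $r'\to 0$. Taking $r' = \lambda\sqrt\tau$, for $\lambda$ sufficiently small (depending on the flow, via $M$) the error terms $e^{A\sqrt{r'}}-1$ and $AM(r')^2$ are negligible, so $\Theta_{refl(S,\kappa)}(\mcfM,0,\lambda\sqrt\tau)\leq \Psi(\lambda\sqrt\tau)\leq 2\Theta_{refl(S,\kappa)}(\mcfM,0)$. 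Combined with the previous display this gives the stated inequality with $C(r,\tau,\kappa,n) = 2\,c(r,\tau,n)^{-1}$.

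There is no real obstacle here: the argument is essentially book-keeping with the explicit formulas for $\rho_S$, $\phi_{S,\kappa}$, and $f_{S,\kappa}$, plus the already-proven monotonicity. The two points to be mildly careful about are (i) verifying that the cutoff stays bounded below on the small ball $B_{\lambda r}(0)$ at time $-\lambda^2\tau$, which the explicit expression handles directly, and (ii) noting that the boundary case $0\in B_{\kappa/10}(S)\cap\bar\Omega$ and the interior case both produce the same type of lower bound on $\Theta_{refl(S,\kappa)}$, so no separate treatment is needed beyond the observation that the reflected term is non-negative.
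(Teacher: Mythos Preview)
The proposal is correct and follows essentially the same argument as the paper: both lower-bound the cutoff $\phi$ by $1/16$ via Remark~\ref{rem:cutoff-support}, lower-bound the Gaussian kernel on the ball, drop the non-negative reflected term, and invoke monotonicity to bound $\Theta_{refl}$ at the small scale by $2\,\Theta_{refl}(\mcfM,0)$. The only cosmetic difference is that the paper works directly with the dilated flow $\dilD_{1/\lambda}\mcfM$ and the scaled barrier $S/\lambda$ (so the density at scale $\sqrt\tau$), whereas you unwind the scaling and work with $\mu(-\lambda^2\tau)$ at the original scale; by the scale-invariance of $\Theta_{refl}$ these are equivalent.
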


\begin{proof}
By Remark \ref{rem:cutoff-support} for $\lambda$ small we have $\phi_{S/\lambda, \kappa/\lambda}(x, -\tau) \geq 1/16$ on $B_r(0)$.  

Therefore, provided $\lambda \leq \lambda_0(\mcfM, \kappa, n)$, we have
\begin{align*}
2 \Theta_{refl(S,\kappa)}(\mcfM, 0)
&\geq \Theta_{refl(S/\lambda, \kappa/\lambda)}(\dilD_{1/\lambda} \mcfM, 0, \sqrt{\tau}) \\
&\geq \frac{1}{16} (4\pi \tau)^{-n/2} \int_{(\dilD_{1/\lambda} \mcfM)(-\tau) \cap B_r} e^{-|x|^2/4\tau} \\
&\geq \frac{1}{c(r, \tau, n)} ||(\dilD_{1/\lambda}\mcfM)(-\tau)||(B_r). \qedhere
\end{align*}
\end{proof}

Since $(S - x)/\lambda$ converges in $C^3_{loc}$ to a plane as $\lambda \to 0$, whenver $x \in S$, we have by Theorem \ref{thm:brakke-compactness}:
\begin{prop}\label{prop:tangent-flows}
Let $x_0 \in \overline{\Omega}$, $t_0 > -1$, and $\lambda_i \to 0$.  Write $X_0 = (x_0, t_0)$.

After passing to a subsequence, there is an ancient $\mcfM'$ so that
\[
\dilD_{1/\lambda_i}(\mcfM - X_0) \to \mcfM'.
\]
Here $\mcfM'$ is either a Brakke flow in $\R^N$ (if $x_0 \not\in S$), or a free-boundary Brakke flow in some half-space in $\R^N$ (if $x_0 \in S$).

If $\mcfM'$ has free-boundary it can be reflected to obtain a Brakke flow $\tilde\mcfM'$ in $\R^N$ (without boundary).  Otherwise simply let $\tilde\mcfM' = \mcfM'$.
\end{prop}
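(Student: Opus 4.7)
The plan is to apply Theorem~\ref{thm:brakke-compactness} to the rescaled sequence $\mcfM_i := \dilD_{1/\lambda_i}(\mcfM - X_0)$. Each $\mcfM_i$ is a free-boundary Brakke flow supported in $\Omega_i = (\Omega - x_0)/\lambda_i$, with barrier $S_i = (S - x_0)/\lambda_i$ and ambient open set $U_i = (U - x_0)/\lambda_i$, defined on the time interval $[(-1 - t_0)/\lambda_i^2, \infty)$ (which converges to $(-\infty,\infty)$ since $t_0 > -1$). Since $d(x_0, \partial U) > 0$, we have $U_i \to \R^N$ as open sets. The reflection regularity scales satisfy $r_{S_i} = r_S/\lambda_i \to \infty$. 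If $x_0 \in S$, then $S_i \to T_{x_0}S$ in $C^3_{loc}$, a hyperplane through the origin; if $x_0 \notin S$, then $S_i$ exits every compact set for large $i$, so on any fixed compact set we are eventually in the interior (no-boundary) regime.

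The key quantitative input is a uniform local mass bound on finite time intervals. For any fixed $r, \tau > 0$, the lemma preceding the statement gives
\[
\mcfM_i(-\tau)(B_r(0)) \leq C(r, \tau, \kappa, n)\, \Theta_{refl(S, \kappa)}(\mcfM, X_0)
\]
for $i$ large, and $\Theta_{refl(S, \kappa)}(\mcfM, X_0)$ is finite by Theorem~\ref{thm:pretty-mono}. Once the mass is bounded at some negative time, Corollary~\ref{cor:mass-bounds} propagates forward to give uniform bounds on $\mcfM_i(t)(K)$ for all $t$ in any compact time interval and all compact $K$.

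With these bounds, Theorem~\ref{thm:brakke-compactness} yields, on each finite interval $[-k, k]$, subsequential convergence to a free-boundary Brakke flow. Diagonalizing over $k \to \infty$ extracts a subsequence for which $\mcfM_i \to \mcfM'$ on all of $\R$. The limit $\mcfM'$ lives in $\R^N$ with barrier $T_{x_0}S$ (a hyperplane) if $x_0 \in S$, or with empty barrier otherwise; in the former case the one-sidedness is inherited from $\mcfM \subset \overline{\Omega}$, placing $\mcfM'$ in a half-space. When $\mcfM'$ has free-boundary in a plane, Proposition~\ref{prop:brakke-reflect} gives the reflected Brakke flow $\tilde\mcfM'$ on $\R^N$; otherwise we set $\tilde\mcfM' = \mcfM'$.

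The main point requiring care is that ancient mass control really does come out of the single finite quantity $\Theta_{refl(S, \kappa)}(\mcfM, X_0)$: this is what lets the same density bound serve uniformly at every dilation scale as $\lambda_i \to 0$, producing a flow defined all the way to $t = -\infty$. Everything else is a direct invocation of the compactness and reflection theorems already established.
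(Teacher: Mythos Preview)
Your proposal is correct and follows the same route as the paper: the paper states the proposition as an immediate consequence of the preceding mass-bound lemma together with Theorem~\ref{thm:brakke-compactness} (and the observation that $(S-x_0)/\lambda_i \to T_{x_0}S$ in $C^3_{loc}$ when $x_0 \in S$), while you have simply filled in the details the paper leaves implicit, including the forward propagation of mass via Corollary~\ref{cor:mass-bounds} and the diagonalization over time intervals.
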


We show that $\mcfM'$ moves by self-shrinking.
\begin{lemma}\label{lem:mass-decay}
For any $\Lambda$, and $\tau \in (0, \tau_0/2)$, and $\kappa \leq \min\{r_S/c_1, d(0, \partial U)\}$, we have
\[
\int_{\mcfM(-\tau) \cap \{ |x|^2 \geq \Lambda \tau\} } f(x, -\tau)   \leq c(M, \kappa, n) e^{-\Lambda/8}
\]
where $M$ is any bound on $\mu(-\tau_0/2)(B_{\kappa/2}(0))$, and $r_S \geq \kappa$.  In particular, $M$ is independent of $\tau$ and $\Lambda$.
\end{lemma}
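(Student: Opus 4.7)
The plan is to combine two ingredients: a uniform upper bound on $\int f(x,-2\tau)\,d\mu(-\tau)$ coming from monotonicity at a \emph{shifted} spacetime center, and a pointwise Gaussian comparison on the annular region $\{|x|^2 \geq \Lambda\tau\}$ that produces the exponential factor $e^{-\Lambda/8}$.

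First I would observe that $\int f(x,-2\tau)\,d\mu(-\tau)$ is, by inspection of the definitions, exactly the reflected Gaussian density $\Theta_{\mathrm{refl}(S,\kappa)}(\mcfM,(0,\tau),\sqrt{2\tau})$ centered at the \emph{future} spacetime point $(0,\tau)$ at scale $\sqrt{2\tau}$: writing $\tau' = \tau - (-\tau) = 2\tau$ reproduces $f_{S,\kappa,(0,\tau)}(\cdot,-\tau) = f(\cdot,-2\tau)$. The hypotheses of Theorem~\ref{thm:pretty-mono} at the center $(0,\tau)$ are satisfied ($0 \in \bar\Omega \cap B_{\kappa/10}(S)$, $\kappa \leq d(0,\partial U)$, and $\sqrt{2\tau} \leq \sqrt{\tau_0}$ since $\tau < \tau_0/2$), so the monotonicity gives
\[
\int f(x,-2\tau)\,d\mu(-\tau) \;\leq\; e^{A\sqrt{\tau_0}} \Theta_{\mathrm{refl}(S,\kappa)}(\mcfM,(0,\tau),\sqrt{\tau_0}) + AM\,\tau_0 \;\leq\; C(M,\kappa,n),
\]
where the density at the largest allowed scale is estimated by the ambient mass at time $-\tau_0/2$ (bounded by $M$ by hypothesis).

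Next I would produce a pointwise bound of the form $f(x,-\tau)\cdot\mathbf{1}_{\{|x|^2\geq\Lambda\tau\}} \leq C(n)\,e^{-\Lambda/8}\,f(x,-2\tau)$ on $\mathrm{spt}\,\mu(-\tau)$. For the $\rho\phi$-piece this is the classical splitting $e^{-|x|^2/(4\tau)} \leq e^{-\Lambda/8}\,e^{-|x|^2/(8\tau)}$ valid on $\{|x|^2 \geq \Lambda\tau\}$, which yields $\rho(x,-\tau) \leq 2^{n/2}e^{-\Lambda/8}\rho(x,-2\tau)$, combined with the observation that $\tau \mapsto \phi(x,-\tau)$ is (pointwise in $x$) nondecreasing — this is a direct check from the formula in Definition~\ref{defn:mass-cutoff} since $\partial_\tau[\kappa^{-1/2}\tau^{-3/4}|x|^2 - \alpha\kappa^{-1/2}\tau^{1/4}] < 0$. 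For the reflected piece $\tilde\rho\tilde\phi$, I use the one-sidedness of $\mcfM$: since both $0$ and $\mathrm{spt}\,\mcfM$ lie in $\bar\Omega$, the inverse-projection coordinates of Section 2 at $\zeta(0)$ give $|\tilde x|^2 - |x|^2 = 4\,d(0)\,|d_S(x)| \geq 0$ in the flat case, and Lemma~\ref{lem:refl-bounds} controls the curvature error by $O(\kappa^3/r_S)$, which is absorbed into a fraction of $|x|^2$ whenever $\kappa \leq r_S/c_1$ with $c_1$ large. Thus $|\tilde x|^2 \geq c|x|^2 \geq c\Lambda\tau$ on $\{|x|^2\geq\Lambda\tau\}\cap\mathrm{spt}\,\mu$, and the same Gaussian splitting applied to $\tilde\rho$ produces $\tilde\rho(x,-\tau) \leq C(n)\,e^{-c\Lambda/8}\,\tilde\rho(x,-2\tau)$, with $\tilde\phi(x,-\tau)\leq\tilde\phi(x,-2\tau)$ by the same monotonicity of $\phi$.

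Integrating the pointwise inequality and using step~1 finishes the proof:
\[
\int_{\{|x|^2\geq\Lambda\tau\}} f(x,-\tau)\,d\mu(-\tau) \;\leq\; C(n)\,e^{-\Lambda/8}\int f(x,-2\tau)\,d\mu(-\tau) \;\leq\; c(M,\kappa,n)\,e^{-\Lambda/8}.
\]
The main obstacle is the reflected piece: one must ensure that the Gaussian decay is inherited by $\tilde\rho$ despite the curvature of $S$, which requires the geometric comparison $|\tilde x|^2\gtrsim|x|^2$ that relies decisively on the one-sidedness of the flow with respect to the barrier. In regimes where $\Lambda\tau$ is small compared to the curvature error scale $\kappa^3/r_S$ (so the geometric comparison degrades), one observes that $\Lambda$ is bounded in terms of $M,\kappa,n$ alone and the claimed inequality holds trivially after enlarging $c(M,\kappa,n)$.
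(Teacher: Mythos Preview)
Your argument is essentially the paper's: both extract $e^{-\Lambda/8}$ via the Gaussian splitting $e^{-|x|^2/4\tau}=e^{-|x|^2/8\tau}\cdot e^{-|x|^2/8\tau}$ together with the $\tau$-monotonicity $\phi(x,-\tau)\leq\phi(x,-2\tau)$, identify $\int f(\cdot,-2\tau)\,d\mu(-\tau)$ with $\Theta_{refl(S,\kappa)}(\mcfM,(0,\tau),\sqrt{2\tau})$, and then invoke Theorem~\ref{thm:pretty-mono} at the shifted center $(0,\tau)$ to bound this by $C(M,\kappa,n)$. The paper passes over the reflected piece $\tilde\rho\tilde\phi$ without comment; you are right that it requires the one-sidedness comparison $|\tilde x|\gtrsim |x|$ on $\spt\mcfM$.

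One correction to your treatment of that piece: the curvature error in $|\tilde x|^2-|x|^2$ scales like $O(|x|^3/r_S)$, not $O(\kappa^3/r_S)$, because Lemma~\ref{lem:refl-bounds} bounds $|\tilde y-\refl(y)|$ by $c_1|y-\zeta(0)|^2/r_S$. Since $|x|\leq\kappa/20$ on $\spt\phi$ and $\kappa\leq r_S/c_1$, this gives $|\tilde x|^2\geq(1-C\kappa/r_S)|x|^2\geq\tfrac12|x|^2$ uniformly on $\spt f\cap\spt\mcfM$. With that scaling the absorption into $|x|^2$ works on all of $\{|x|^2\geq\Lambda\tau\}$, and your final paragraph is unnecessary --- which is fortunate, since the assertion there that ``$\Lambda$ is bounded in terms of $M,\kappa,n$'' is not correct (small $\Lambda\tau$ can come from small $\tau$ with $\Lambda$ arbitrary).
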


\begin{proof}
First observe that
\begin{align*}
\phi_{S, \kappa}(x, -\tau) \leq \left(1 - \kappa^{-2+3/2} |x|^2/(2\tau)^{3/4} + \alpha \kappa^{-2+3/2} (2\tau)^{1-3/4} \right)^4_+ = \phi_{S, \kappa}(x, -2\tau) .
\end{align*}

Therefore provided $\tau \leq \tau_0/2$ we can use the monotonicity formula centered at $(0, \tau)$ to deduce
\begin{align*}
&\int_{\mcfM(-\tau) \cap \{ |x|^2 \geq \Lambda \tau \}} f_{S, \kappa}(x, -\tau) \\
&\leq e^{-\Lambda/8} 2^{m/2} \int_{\mcfM(-\tau)} f_{S, \kappa, (0, \tau)}(x, -\tau)  \\
&\leq e^{-\Lambda/8}2^{m/2} \left[ e^{A\tau_0^{1/4}} \int_{\mcfM(-\tau_0/2)} f_{S, \kappa, (0, \tau)}(x, -\tau_0/2) + AM\tau_0 \right] \\
&\leq e^{-\Lambda/8} C(\kappa, n, M) .\qedhere
\end{align*}
\end{proof}

\begin{theorem}\label{thm:tangent-flows}
The (reflected) Brakke flow $\tilde\mcfM'$ from Proposition \ref{prop:tangent-flows} is an ancient self-shrinker, having density
\[
\Theta(\tilde\mcfM') = \Theta_{refl(S, \kappa)}(\mcfM, X_0)
\]
for any admissible $\kappa$.
\end{theorem}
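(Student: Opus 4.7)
The plan is to realize $\Theta(\tilde\mcfM')$ as the common value of a sequence of rescaled reflected Gaussian densities and then invoke Huisken's monotonicity. Translating so that $X_0 = 0$, set $\mcfM_i = \dilD_{1/\lambda_i}\mcfM$ with barriers $S_i = S/\lambda_i$ and cutoff scale $\kappa_i = \kappa/\lambda_i \to \infty$. The barriers $S_i$ converge in $C^3_{loc}$ either to a hyperplane $P \ni 0$ (when $x_0 \in S$) or, effectively, recede to infinity (when $x_0 \notin S$), and by Proposition \ref{prop:tangent-flows} the reflected limit $\tilde\mcfM'$ is a well-defined ancient Brakke flow in $\R^N$.

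The heart of the argument is to show that for every $r > 0$,
\[
\Theta_{refl(S_i, \kappa_i)}(\mcfM_i, 0, r) \;\longrightarrow\; \Theta(\tilde\mcfM', 0, r).
\]
Since $\mu_i(-r^2) \to \mu'(-r^2)$ as Radon measures along a subsequence for a.e. $r$ (by Theorem \ref{thm:brakke-compactness}), the argument reduces to two ingredients. First, pointwise convergence of the truncated integrand $f_{S_i, \kappa_i, 0}(\cdot, -r^2) \to \rho(\cdot, -r^2) + \tilde\rho(\cdot, -r^2)$ (with the reflection taken across the limit plane $P$, or the reflected term absent when $x_0\notin S$): this is immediate from $\kappa_i \to \infty$, which makes $\phi_{S_i,\kappa_i} \to 1$ pointwise, combined with the reflection estimates of Lemma \ref{lem:refl-bounds} applied to $S_i$. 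Second, a uniform $L^1$ dominating function to justify the limit under the integral: this is supplied by the Gaussian tail bound of Lemma \ref{lem:mass-decay} applied to the rescaled flows $\mcfM_i$, whose hypotheses hold uniformly thanks to $r_{S_i} \to \infty$ and the scale-invariant mass bound preceding that lemma. The identity $\int (\rho + \tilde\rho)\, d\mu' = \int \rho\, d\tilde\mu'$ then identifies the limit with Huisken's classical Gaussian $\Theta(\tilde\mcfM', 0, r)$.

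Next, combining the convergence above with the scale-invariance
\[
\Theta_{refl(S_i, \kappa_i)}(\mcfM_i, 0, r) \;=\; \Theta_{refl(S,\kappa)}(\mcfM, 0, \lambda_i r),
\]
and the existence of $\lim_{\rho \to 0^+}\Theta_{refl(S,\kappa)}(\mcfM, 0, \rho) = \Theta_{refl(S,\kappa)}(\mcfM, X_0)$ from Theorem \ref{thm:pretty-mono}, we conclude that $r \mapsto \Theta(\tilde\mcfM', 0, r)$ is constant on $(0,\infty)$, with common value $\Theta_{refl(S,\kappa)}(\mcfM, X_0)$. Huisken's classical monotonicity formula (whose equality case characterizes backwards self-similarity) then forces $\tilde\mcfM'$ to be an ancient self-shrinker centered at the origin, and the density identity in the statement is automatic. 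Independence of $\kappa$ is a free consequence, since $\Theta(\tilde\mcfM')$ has no reference to $\kappa$.

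The main obstacle is upgrading $\mu_i(-r^2) \to \mu'(-r^2)$ from a.e. $r$ to every $r > 0$, which is needed for the conclusion that $\Theta(\tilde\mcfM',0,r)$ is constant at all scales (and hence that $\tilde\mcfM'$ is exactly self-similar, not merely self-similar on a full-measure set of times). The resolution is to use Proposition \ref{thm:semi-decreasing}: both the rescaled densities and their limit are monotone in $r$ up to the vanishing error $e^{A\sqrt{r}}$ term, and the limit flow $\tilde\mcfM'$ inherits the semi-decreasing property, so continuity at a.e. $r$ combined with one-sided monotonicity extends the identity to all $r$.
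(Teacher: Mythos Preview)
Your proposal is correct and follows essentially the same strategy as the paper: rescale, pass to the limit on bounded regions using Radon measure convergence together with uniform convergence of the integrands $f_{S_i,\kappa_i} \to \rho + \tilde\rho$, control the Gaussian tail via Lemma \ref{lem:mass-decay}, and then conclude constant density and self-similarity from Huisken's monotonicity. The paper phrases the tail step slightly differently---it rescales the truncated integral back to the \emph{original} flow $\mcfM$ at time $-\lambda_i^2\tau_0$, where the region $B_{\lambda_i R}$ becomes $\{|x|^2 \le (R^2/\tau_0)\tau\}$, and applies Lemma \ref{lem:mass-decay} once with fixed $\kappa$---but this is equivalent to your application to $\mcfM_i$ by scale-invariance.

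One correction: your concern in the final paragraph about upgrading ``a.e.\ $r$'' to ``every $r$'' is unnecessary. Theorem \ref{thm:brakke-compactness} already gives $\mu_i(t) \to \mu'(t)$ as Radon measures for \emph{every} $t$ (this is built into the Helly selection and semi-decreasing structure), so the convergence of densities holds at every scale directly. Your proposed workaround via monotonicity would work, but there is no gap to fill. Also, the phrase ``uniform $L^1$ dominating function'' is slightly imprecise in the setting of varying measures; the correct mechanism---which you in effect describe---is to split into $B_R$ and its complement, use Radon convergence on $B_R$, and bound the complement uniformly by Lemma \ref{lem:mass-decay}, exactly as the paper does.
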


\begin{corollary}
The pointwise density $\Theta_{refl(S, \kappa)}(\mcfM, X)$ is independent of $\kappa$.
\end{corollary}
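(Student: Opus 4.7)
The plan is to extract this as a one-line corollary of Theorem \ref{thm:tangent-flows}. Fix a spacetime point $X_0 = (x_0, t_0)$ in the track of $\mcfM$, and let $\kappa_1, \kappa_2$ be two cutoff radii that are both admissible at $X_0$, i.e.\ each satisfies $\kappa_j \leq \min\{r_S/c_1, d(x_0, \partial U)\}$ (and the time horizon condition $t_0 + 1 \geq \tau_0(n, \kappa_j)$ needed for the reflected density to be defined).

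First I would construct a tangent flow at $X_0$. Pick any sequence $\lambda_i \to 0$ and invoke Proposition \ref{prop:tangent-flows} to pass to a subsequence along which $\dilD_{1/\lambda_i}(\mcfM - X_0) \to \mcfM'$, then reflect across the limiting tangent plane of $S$ at $x_0$ if $x_0 \in S$ to obtain the ancient Brakke flow $\tilde\mcfM'$ in $\R^N$. The key observation is that this construction is manifestly $\kappa$-free: it uses only parabolic dilation of $\mcfM$ about $X_0$ and, when applicable, the standard reflection. Consequently the ordinary Gaussian density at infinity, $\Theta(\tilde\mcfM')$, is a quantity that does not see $\kappa$ at all.

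Now I would apply Theorem \ref{thm:tangent-flows} to this one tangent flow $\tilde\mcfM'$ twice, first with $\kappa = \kappa_1$ and then with $\kappa = \kappa_2$. Each application yields an identification of the same number $\Theta(\tilde\mcfM')$ with the corresponding reflected density, so
$$
\Theta_{refl(S, \kappa_1)}(\mcfM, X_0) \;=\; \Theta(\tilde\mcfM') \;=\; \Theta_{refl(S, \kappa_2)}(\mcfM, X_0),
$$
which is the claim. As a bonus, the same argument shows the limit on the left is independent of the subsequence used to extract $\tilde\mcfM'$, so every tangent flow at $X_0$ has a common Gaussian density at infinity.

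There is no real obstacle at this stage, since all the substantive analytic content — passing to the tangent flow, identifying its density with the reflected monotone quantity — is packaged inside Theorem \ref{thm:tangent-flows}. The only thing to verify, and it is essentially tautological, is that the tangent flow construction of Proposition \ref{prop:tangent-flows} makes no reference to the cutoff radius used to define $\Theta_{refl(S,\kappa)}$.
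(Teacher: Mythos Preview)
Your proposal is correct and is exactly how the paper intends the Corollary to be read: Theorem \ref{thm:tangent-flows} already asserts $\Theta(\tilde\mcfM') = \Theta_{refl(S,\kappa)}(\mcfM, X_0)$ for \emph{any} admissible $\kappa$, and since the left-hand side is manifestly $\kappa$-independent, so is the right. (One small quibble: the pointwise density is the $r\to 0$ limit, so it exists for any $t_0 > -1$ regardless of whether $t_0 + 1 \geq \tau_0(n,\kappa)$; that time-horizon condition only governs the range of scales $r$ at which the finite-scale density is defined.)
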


\begin{proof}[Proof of Theorem]
We can suppose $X_0 = 0$.  Take $\mcfM_i = \dilD_{1/\lambda_{i}}\mcfM \to \mcfM'$ as above, and $\kappa \leq \min\{r_S/c_1, d(0, \partial U)\}$.  Let $\refl$ be reflection about the limit barrier plane if it exists, and formally $\infty$ if it does not.  By the dominated convergence theorem we have
\begin{align*}
\int_{\mcfM'(-\tau_0) \cap B_R} \tau_0^{-m/2} \left( e^{-|x|^2/4\tau_0} + e^{-|\refl(x)|^2/4\tau_0} \right) 
&= \lim_i \int_{\mcfM_i(-\tau_0) \cap B_R} f_{S/\lambda_i, \kappa/\lambda_i}(x, -\tau_0) \\
&= \lim_i \int_{\mcfM(-\lambda_i^2 \tau_0) \cap B_{\lambda_i R}} f_{S, \kappa}(x, -\lambda_i^2 \tau_0)
\end{align*}

Writing $\tau \equiv \lambda_i^2 \tau_0$, we have
\begin{align*}
B_{\lambda_i R} = \{ x : |x| \leq \lambda_i R = \frac{R}{\sqrt{\tau_0}} \sqrt{\tau} \} = \{ x : |x|^2 \leq \frac{R^2}{\tau_0} \tau \} .
\end{align*}
So every term in the limiting sequence is of the form
\[
\int_{\mcfM(-\tau) \cap \{ |x|^2 \leq \frac{R^2}{\tau_0} \tau\}} f_{S, \kappa}(x, -\tau)
\]

By the Lemma \ref{lem:mass-decay} we have
\[
\int_{\mcfM'(-\tau_0) \cap B_R} \tau_0^{-m/2} \left( e^{-|x|^2/4\tau_0} + e^{-|\refl(x)|^2/4\tau_0} \right)  = \Theta_{refl(S)}(\mcfM, 0) \pm \epsilon
\]
where $\epsilon \to 0$ as $R \to \infty$.  By the monotone convergence theorem the LHS converges to the standard Gaussian density of the reflected Brakke flow $\tilde\mcfM'$.  The above equality holds for every $\tau_0$, so $\tilde\mcfM'$ has constant density.

By the standard monotonicity formula (\cite{huisken:monotonicity}) we deduce $\tilde\mcfM'$ is a self-shrinker.
\end{proof}

The above Theorem and Corollary motivate the following definitions.

\begin{definition}
The \emph{reflected Gaussian density at a point} is
\[
\Theta_{refl(S)}(\mcfM, X) = \lim_{r \to 0} \Theta_{refl(S, \kappa)}(\mcfM, X, r), 
\]
where $\kappa$ is any number $\leq \min\{r_S/c_1, d(x, \partial U)\}$.
\end{definition}

\begin{definition}
Given $x \in S$, a \emph{reflected tangent flow} at $X = (x, t)$ is the flow $\tilde\mcfM'$ obtained by reflecting a tangent flow $\mcfM'$ at $X$ about its free-boundary planar barrier.
\end{definition}

Any reflected tangent flow of $\mcfM$ at $X$ will have constant Gaussian density equal to $\Theta_{refl(S)}(\mcfM, X)$.

We will need the following Proposition.  It should be standard.
\begin{prop}\label{prop:eucl-gauss-density}
Let $\mcfM \equiv (\mu(t))_{t \leq 0}$ be an ancient, self-shrinking Brakke flow in $\R^N$ with $\Theta(\mcfM) < \infty$, and write $\mu(-1) = \mu_{V}$.

Let $V'$ be a tangent cone to $V$ at $x$ (since $V$ is minimal in a weighted metric, we can take tangent cones at every point).  Then we have
\[
\Theta_{eucl}(V') = \Theta(\mcfM, (x, -1)) \leq \Theta(\mcfM).
\]
\end{prop}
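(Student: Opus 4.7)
The strategy is to identify a parabolic tangent flow of $\mcfM$ at $X_0 := (x,-1)$ with the \emph{static} flow generated by the spatial tangent cone $V'$. Granted this identification, the equality of densities reduces to a cone self-similarity computation, while the inequality $\Theta(\mcfM, X_0) \leq \Theta(\mcfM)$ is immediate from the Huisken monotonicity observation recorded just before the proposition.

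Using the shrinker equation one has $\mu(t) = \mu_{\sqrt{-t}\,V}$ for $t < 0$. I would choose $\lambda_i \to 0$ realizing $V'$ as a tangent cone to $V$ at $x$, so that $(V-x)/\lambda_i \to V'$ as integral varifolds, and form $\mcfM_i := \dilD_{1/\lambda_i}(\mcfM - X_0)$. Direct substitution shows $\mcfM_i(s)$ is the pushforward of $V$ under $z \mapsto \lambda_i^{-1}(\sqrt{1-\lambda_i^2 s}\,z - x)$; expanding $\sqrt{1-\lambda_i^2 s} = 1 + O(\lambda_i^2)$, on any fixed bounded ball this map differs from $z \mapsto (z-x)/\lambda_i$ by $O(\lambda_i)$, so $\mcfM_i(s) \to V'$ as varifolds for every fixed $s$. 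Since $\Theta(\mcfM) < \infty$ forces uniform local mass bounds on the $\mcfM_i$ via Huisken monotonicity, Theorem~\ref{thm:brakke-compactness} applied without boundary upgrades this to Brakke convergence $\mcfM_i \to \mcfM^*$, where $\mcfM^*(s) \equiv V'$ is the static cone flow.

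Parabolic scale invariance of the Gaussian integrand gives $\Theta(\mcfM, X_0, \lambda_i) = \Theta(\mcfM_i, 0, 1)$; the left side tends to $\Theta(\mcfM, X_0)$ as $\lambda_i \to 0$, and one must pass Gaussian integrals through the Brakke limit to identify the right side with $\Theta(\mcfM^*, 0, 1)$. For the static cone flow the substitution $y = rz$ in $(4\pi r^2)^{-n/2}\int_{V'} e^{-|y|^2/(4r^2)}\,d\mu_{V'}(y)$ shows $\Theta(\mcfM^*, 0, r)$ is independent of $r$, and a polar-coordinate / $\Gamma$-function computation identifies the constant value with $\Theta_{eucl}(V')$ --- the standard equality of Gaussian and Euclidean density for stationary cones (with any $\omega_n$ normalization matching up across both sides in the paper's conventions). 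The upper bound $\Theta(\mcfM, X_0) \leq \Theta(\mcfM)$ then follows from monotonicity of $r \mapsto \Theta(\mcfM, X_0, r)$ together with its $r \to \infty$ limit.

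I expect the main technical obstacle to be justifying $\Theta(\mcfM_i, 0, 1) \to \Theta(\mcfM^*, 0, 1)$: the Gaussian weight at unit scale is not compactly supported, so weak convergence $\mcfM_i(-1) \to \mcfM^*(-1)$ alone does not immediately yield convergence of the Gaussian integrals. One resolves this with the tail bound $\mu_i(-1)(B_R) \leq C(n)\,\Theta(\mcfM)\,R^n$, valid for every $R$ by applying Huisken monotonicity to each $\mcfM_i$; this allows one to truncate the Gaussian outside a large ball with uniformly small error and then invoke weak convergence on the bounded region. Everything else is bookkeeping in the self-similarity of $\mcfM$.
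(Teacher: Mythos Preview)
Your proposal is correct and follows essentially the same approach as the paper: take a parabolic blow-up at $(x,-1)$, identify the limit with the static flow of the tangent cone $V'$, and compute the Gaussian density of a static cone via the coarea formula. The paper is terser---it routes the density identification through the standard equality $\Theta(\mcfM,X_0)=\Theta(\mcfM')$ for a tangent flow rather than through the explicit convergence $\Theta(\mcfM_i,0,1)\to\Theta(\mcfM^*,0,1)$ you set up, and it simply asserts that the self-shrinking structure forces $\mcfM'$ to be static---but the content is the same, and your added care about the Gaussian tail is a point the paper leaves implicit.
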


\begin{proof}
Let $\lambda_i \to 0$ be chosen so that $(V - x)/\lambda_i \to V'$ as varifolds (where we interpret translation and dilation in the obvious sense of pushforwards).  By passing to a further subsequence, we can assume $\dilD_{1/\lambda_i}(\mcfM - (x, -1)) \to \mcfM'$.

Since $\mcfM$ is self-shrinking, $\mcfM'$ must be a static, eternal flow with $\mcfM'(t) \equiv \mu_{V'}$.  We calculate, using the coarea formula and standard formulas for $\omega_n$, 
\begin{align*}
\Theta(\mcfM, (x, -1)) = \Theta(\mcfM') &= \Theta(\mcfM', 0, 1) \\
&= (4\pi)^{-n/2} \int_{\mcfM'(-1)} e^{-\frac{|x|^2}{4}} \\
&= |V' \cap B_1| \frac{1}{n} (4\pi)^{-n/2} \int_0^\infty r^{n-1} e^{-r^2/4} dr \\
&= \Theta_{eucl}(V'). \qedhere
\end{align*}
\end{proof}

\section{Upper-semi-continuity}

The reduction of $\Theta_{refl}$ to the reflected Gaussian in the case of planar barriers, and the above monotonicity result, give good behavior in limits.

\begin{lemma}\label{lem:mono-blow-up}
Let $\mcfM_i$ be a sequence of Brakke flows in $\Omega_i \subset U_i$, so that (writing $S_i = \partial\Omega_i$) $\inf_i r_{S_i} \geq c_1\kappa > 0$.  Assume that
\[
\limsup_i \Theta_{refl(S_i)}(\mcfM_i, X_i, r_i) \leq \Theta_0
\]
for every $X_i \to 0$, $r_i \to 0$, subject to $r_i \leq R_i$ for some sequence $R_i$.

Let $\lambda_i \to \infty$ be another sequence, with $\lambda_i R_i \to \infty$, and suppose the dilated flows $\dilD_{\lambda_i}\mcfM_i$ converge to some Brakke flow $\mcfM'$ in $\R^N$, having possible free-boundary in a plane.

If $\tilde\mcfM'$ is the reflection of $\mcfM'$ across its barrier plane, then we have
\[
\Theta(\tilde\mcfM', X, r) \leq \Theta_0 \quad \forall X, r.
\]
\end{lemma}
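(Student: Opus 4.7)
The approach is to undo the dilation and appeal directly to the hypothesis. Fix $X = (x, t) \in \R^{N,1}$ and $r > 0$, and set $X^i := \dilD_{1/\lambda_i}(X)$ and $r^i := r/\lambda_i$, so $X^i \to 0$, $r^i \to 0$, and (since $\lambda_i R_i \to \infty$) eventually $r^i \leq R_i$. Choose a common admissible cutoff $\kappa \leq c_1^{-1} \inf_i r_{S_i}$. Parabolic scale-invariance of $\Theta_{refl}$ (from the remark following its definition) gives
\[
\Theta_{refl(S_i, \kappa)}(\mcfM_i, X^i, r^i) = \Theta_{refl(\lambda_i S_i, \lambda_i \kappa)}\bigl(\dilD_{\lambda_i}\mcfM_i, X, r\bigr),
\]
so by the hypothesis,
\[
\limsup_i \Theta_{refl(\lambda_i S_i, \lambda_i \kappa)}\bigl(\dilD_{\lambda_i}\mcfM_i, X, r\bigr) \leq \Theta_0 .
\]
The problem therefore reduces to identifying this limsup with $\Theta(\tilde\mcfM', X, r)$.

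For this identification I would analyze the reflected kernel $f_{\lambda_i S_i, \lambda_i \kappa, X}$ piece by piece. The rescaled barriers $\lambda_i S_i$ have reflection regularity scale $\geq \lambda_i c_1 \kappa \to \infty$; hence, on every bounded ball, either $\lambda_i S_i \to P$ in $C^3_{loc}$ (where $P$ is the planar barrier of $\mcfM'$), or $\lambda_i S_i$ exits every compact set (the case $\tilde\mcfM' = \mcfM'$). In the first case, the reflection maps associated to $\lambda_i S_i$ converge smoothly to the affine reflection $A$ across $P$, so $\tilde\rho_X$ converges to $\rho_X \circ A$; in the second the reflected piece eventually vanishes on any fixed ball. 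Since $\lambda_i \kappa \to \infty$, both cutoffs $\phi_{\lambda_i S_i, \lambda_i \kappa}$ and $\tilde\phi_{\lambda_i S_i, \lambda_i \kappa}$ tend to $1$ uniformly on each bounded ball. Brakke compactness (Theorem \ref{thm:brakke-compactness}) supplies $\mu'_i(t-r^2) \to \mu'(t-r^2)$ as Radon measures, so for each fixed $R$ we get
\[
\int_{B_R} f_{\lambda_i S_i, \lambda_i \kappa, X}\,d\mu'_i(t-r^2) \longrightarrow \int_{B_R} (\rho_X + \rho_X\circ A)\,d\mu'(t-r^2),
\]
with the reflection term interpreted as absent in the no-barrier case. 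Sending $R \to \infty$ in the limit integral gives exactly $\Theta(\tilde\mcfM', X, r)$ by Proposition \ref{prop:brakke-reflect}.

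The main obstacle is exchanging the limits $R \to \infty$ and $i \to \infty$, i.e., obtaining uniform Gaussian-tail control on the family $\dilD_{\lambda_i}\mcfM_i$. I would derive uniform polynomial mass bounds of the form $\mu'_i(t-r^2)(B_\rho(X)) \leq C(\Theta_0)\,\rho^n$ for $\rho$ large, by applying the monotonicity Theorem \ref{thm:pretty-mono} to the rescaled flows together with the hypothesis density bound $\leq \Theta_0$, using Corollary \ref{cor:mass-bounds} to anchor the mass at a single large reference time. With these bounds in hand, a routine dominated-convergence argument closes the exchange of limits, and the lemma follows. Everything besides this tail-control step is bookkeeping for the scaling and the convergence of barriers.
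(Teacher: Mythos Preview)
Your scaling setup and the convergence analysis on each fixed ball $B_R$ are exactly what the paper does. The discrepancy is in what you call ``the main obstacle'': you try to \emph{identify} $\limsup_i \Theta_{refl(\lambda_i S_i,\lambda_i\kappa)}(\dilD_{\lambda_i}\mcfM_i, X, r)$ with $\Theta(\tilde\mcfM', X, r)$, which forces you into a uniform Gaussian-tail estimate and an exchange of limits.

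The paper sidesteps this entirely by observing that only the inequality $\Theta(\tilde\mcfM', X, r) \leq \Theta_0$ is needed. Since $f_{\lambda_i S_i,\lambda_i\kappa,X}\geq 0$, the truncated integral over $B_R$ is bounded above by the full integral, which is the reflected density itself. Hence for each fixed $R$,
\[
\int_{\mcfM'(t-r^2)\cap B_R}\bigl(\rho_X+\rho_X\circ A\bigr)
= \lim_i \int_{(\dilD_{\lambda_i}\mcfM_i)(t-r^2)\cap B_R} f_{\lambda_i S_i,\lambda_i\kappa,X}
\leq \limsup_i \Theta_{refl(\lambda_i S_i)}(\dilD_{\lambda_i}\mcfM_i,X,r)\leq\Theta_0,
\]
the first equality being your dominated-convergence step on the compact set. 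Now send $R\to\infty$ by monotone convergence on the left-hand side alone; the right-hand side is already the constant $\Theta_0$. No tail control on the approximating flows is required, and the polynomial mass bound you sketch (while probably obtainable) is unnecessary.
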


\begin{proof}
Fix an $X_0 = (x_0, t_0)$, $r$, $R$.  By assumption we have a (possibly empty) affine plane $P$ so that $\lambda_i S_i \to P$ in $C^3_{loc}$.  Let $\refl$ be the affine reflection about $P$ if it exists, and formally $\infty$ if it does not.

We have by the dominated convergence theorem:
\begin{align*}
&\int_{\mcfM'(t_0 - r^2) \cap B_R} r^{-m} (e^{-|x - x_0|^2/4r^2} + e^{-|\refl(x) - x_0|^2/4r^2}) \\
&= \lim_i \int_{(\dilD_{\lambda_i} \mcfM_i)(t_0 - r^2) \cap B_R} f_{\lambda_i S_i, \lambda_i \kappa, X_0}(x, t_0 - r^2) \\
&\leq \limsup_i \Theta_{refl(\lambda_i S_i)}(\dilD_{\lambda_i} \mcfM_i, X_0, r) \\
&= \limsup_i \Theta_{refl(S_i)}(\mcfM_i, \dilD_{1/\lambda_i} X_0, r/\lambda_i) \\
&\leq \Theta_0
\end{align*}
The last inequality is justified because of the requirement $\lambda_i R_i \to \infty$.

Taking $R \to \infty$, we have by the monotone convergence theorem that
\[
\Theta(\tilde\mcfM', X_0, r) \leq \Theta_0  . \qedhere
\]
\end{proof}

\begin{lemma}\label{lem:mono-usc}
Let $\mcfM_i$ be a sequence of free-boundary Brakke flows in $\Omega_i \subset U_i$.  Suppose
\[
\inf r_{S_i} \geq c_1\kappa > 0, \quad U_i \to U, \quad \Omega_i \to \Omega \text{ in } C^3_{loc},
\]
and $\mcfM_i \to \mcfM$ for some free-boundary Brakke flow in $\Omega \subset U$.

Suppose $\mcfM$ is defined for times $> -1$.  Take $X = (x, t)$ with $t > -1$.  Then for every $X_i \to X$, $r_i \to 0$, we have
\[
\limsup_i \Theta_{refl(S_i, \kappa)}(\mcfM_i, X_i, r_i) \leq \Theta_{refl(S)}(\mcfM, X).
\]

In particular, we have
\[
\limsup_i \Theta_{refl(S_i)}(\mcfM_i, X_i) \leq \Theta_{refl(S)}(\mcfM, X).
\]
\end{lemma}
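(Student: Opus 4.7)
The plan is to reduce the statement to two ingredients: (i) weak continuity of the reflected Gaussian density at a fixed positive scale under the convergence $\mcfM_i \to \mcfM$, $X_i \to X$, and (ii) the monotonicity formula of Theorem \ref{thm:pretty-mono}, which lets us replace the vanishing scales $r_i$ by a single fixed scale $r > 0$ at the cost of a controlled error that tends to zero as $r \to 0$.

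First I would set up uniform admissibility: since $\inf_i r_{S_i} \geq c_1 \kappa$ and $U_i \to U$, for $i$ large the choice of $\kappa$ is admissible at both $X_i$ and $X$; shrinking $\tau_0$ slightly I may also assume $t - \tau_0 > -1$ so that $\min(t_i - \tau_0, -1) = -1$ for $i$ large. Choose $M$ uniformly bounding $\mu_i(-1)(B_{\kappa/2}(x_i))$, which exists because $\mu_i(-1) \to \mu(-1)$ by Theorem \ref{thm:brakke-compactness}. Finally, note that the two cases in the definition of $\Theta_{refl(S, \kappa)}$ agree on the overlap, so no issue arises from $X_i$ crossing $\partial B_{\kappa/10}(S)$.

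For ingredient (i), I claim that for each $r \in (0, \min(\sqrt{\tau_0}, \sqrt{1+t}))$ at which $s \mapsto \mu(s)$ is (weakly) continuous at $s = t - r^2$ (a set of full measure by Proposition \ref{thm:semi-decreasing}(B)),
\[
\Theta_{refl(S_i, \kappa)}(\mcfM_i, X_i, r) \longrightarrow \Theta_{refl(S, \kappa)}(\mcfM, X, r).
\]
Since $S_i \to S$ in $C^3_{loc}$, the projections $\zeta_i$ and reflections $\tilde x_i$ converge in $C^2$ on compacta, so $f_{S_i, \kappa, X_i}(\cdot, s_i) \to f_{S, \kappa, X}(\cdot, s)$ locally uniformly whenever $X_i \to X$ and $s_i \to s$. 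The delicate point---the main obstacle of the proof---is that $s_i := t_i - r^2$ varies with $i$: the pointwise-in-time convergence $\mu_i(s) \to \mu(s)$ provided by Theorem \ref{thm:brakke-compactness} is insufficient, and must be promoted to $\mu_i(s_i) \to \mu(s)$ at continuity points. This is handled by sandwiching via the semi-decreasing property: for any admissible test $\phi$ and $s^- < s_i < s^+$,
\[
\mu_i(s^+)(\phi) - C_\phi(s^+ - s_i) \leq \mu_i(s_i)(\phi) \leq \mu_i(s^-)(\phi) + C_\phi(s_i - s^-);
\]
sending $i \to \infty$ with $s^{\pm}$ fixed and then $s^{\pm} \to s$ yields $\mu_i(s_i)(\phi) \to \mu(s)(\phi)$ at continuity points $s$. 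Pairing this with the uniform convergence of $f_{S_i, \kappa, X_i}$ completes (i).

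Once (i) is in hand, the conclusion follows by monotonicity. Fix any such admissible continuity-point scale $r > 0$; for $i$ large we have $r_i < r$, so by Theorem \ref{thm:pretty-mono},
\[
\Theta_{refl(S_i, \kappa)}(\mcfM_i, X_i, r_i) \leq e^{A \sqrt{r}} \Theta_{refl(S_i, \kappa)}(\mcfM_i, X_i, r) + A M r^2.
\]
Taking $\limsup_i$ and applying (i) gives
\[
\limsup_i \Theta_{refl(S_i, \kappa)}(\mcfM_i, X_i, r_i) \leq e^{A \sqrt{r}} \Theta_{refl(S, \kappa)}(\mcfM, X, r) + A M r^2.
\]
The left-hand side is independent of $r$, so letting $r \to 0$ through continuity points the right-hand side converges to $\Theta_{refl(S)}(\mcfM, X)$ by definition. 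The ``In particular'' statement is the special case $r_i = 0$, i.e.\ $\Theta_{refl(S_i)}(\mcfM_i, X_i) = \lim_{r \to 0} \Theta_{refl(S_i, \kappa)}(\mcfM_i, X_i, r)$, for which the monotonicity gives $\Theta_{refl(S_i)}(\mcfM_i, X_i) \leq \Theta_{refl(S_i, \kappa)}(\mcfM_i, X_i, r_i')$ for any $r_i' > 0$, and one runs the same argument with $r_i' \to 0$.
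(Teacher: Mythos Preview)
Your argument is correct and follows essentially the same route as the paper's: bound $\Theta$ at scale $r_i$ by $\Theta$ at a fixed scale $r$ via Theorem~\ref{thm:pretty-mono}, pass to the limit in $i$, then send $r\to 0$. The paper achieves the fixed-scale limit by first translating each flow so that $X_i=X=0$ and invoking dominated convergence at the common time slice $-r^2$, whereas you leave $X_i$ in place and handle the drifting time $t_i-r^2$ explicitly with the semi-decreasing sandwich at continuity points---this is the same content, with the time-shift issue the paper absorbs into the translation made more visible.
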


\begin{proof}
We can assume $X_i = X = 0$ by translating the $\mcfM_i$, $\mcfM$ in spacetime.  Then $\mcfM$ is defined for time $\geq T_0$ for some $T_0 < 0$.  We can therefore assume the $\mcfM_i$ are defined for time $\geq T_0/2$.

Let $\tau_1 = \min(-T_0/4, \tau_0)$.  Since $\mcfM_i \to \mcfM$, we can choose a uniform $M$ with
\[
M \geq ||\mcfM_i(-\tau_1)||(B_{\kappa/2}(0)) \quad \forall i, \quad M \geq ||\mcfM(-\tau_1)||(B_{\kappa/2}(0)).
\]

Then for every $r^2 \leq \tau_1$, we have by the dominated convergence theorem and Theorem \ref{thm:pretty-mono}
\begin{align*}
\limsup_i \Theta_{refl(S_i, \kappa)}(\mcfM_i, 0, r_i)
&= \limsup_i e^{A \sqrt{r_i}} \Theta_{refl(S_i, \kappa)}(\mcfM_i, 0, r_i) + AM r_i^2 \\
&\leq \limsup_i e^{ A \sqrt{r}} \Theta_{refl(S_i, \kappa)}(\mcfM_i, 0, r) + AM r^2 \\
&= e^{A r^{\sqrt{r}}} \Theta_{refl(S, \kappa)}(\mcfM, 0, r) + AM r^2 .
\end{align*}
Now take $r \to 0$.
\end{proof}

\begin{corollary}
In the notation of the above Lemma, we have that
\[
\spt \mcfM = \{ \Theta_{refl(S)}(\mcfM, \cdot) \geq 1 \} \text{ in } U \times (-1, \infty) \subset \R^{N,1} ,
\]
and $\spt \mcfM_i \to \spt \mcfM$ in the local Hausdorff sense in $U \times (-1, \infty)$.
\end{corollary}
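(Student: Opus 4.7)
The plan is to first establish the set equality $\spt\mcfM = \{\Theta_{refl(S)}(\mcfM,\cdot) \geq 1\}$ on $U \times (-1,\infty)$, then derive the local Hausdorff convergence from this equality, applied both to each $\mcfM_i$ and to $\mcfM$, combined with the upper semi-continuity from Lemma \ref{lem:mono-usc}.

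For the easy inclusion $\{\Theta_{refl(S)}\geq 1\} \subset \spt\mcfM$, I would argue as follows. If $X = (x,t)$ has $\Theta_{refl(S)}(\mcfM, X) \geq 1$, then Theorem \ref{thm:pretty-mono} yields $\Theta_{refl(S,\kappa)}(\mcfM, X, r) \geq 1/2$ for all $r$ sufficiently small. Since $f_{S,\kappa,X}(\cdot, t - r^2)$ has Euclidean support in a ball of radius $O(r)$ around $x$ (Remark \ref{rem:cutoff-support}), this forces $\mu(t - r^2)(B_{O(r)}(x)) > 0$, and selecting $y_r$ in $\spt\mu(t - r^2)$ inside this ball yields a sequence $(y_r, t - r^2) \to X$ in $\spt\mcfM$.

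The reverse inclusion $\spt\mcfM \subset \{\Theta_{refl(S)}\geq 1\}$ rests on the key lemma: \emph{if $\mu(s)$ is an integral free-boundary varifold and $x \in \spt\mu(s)$, then $\Theta_{refl(S)}(\mcfM, (x,s)) \geq 1$.} Granting this, I would approximate any $X_0 \in \spt\mcfM$ by such $(x_j, s_j)$ using the left-continuity of Proposition \ref{thm:semi-decreasing}: for an approximating sequence $(x_j', t_j') \to X_0$ of generic support points, slightly earlier times $s_j < t_j'$ yield $\mu(s_j)$ integral (a.e.\ $s_j$ is integral) with positive mass in every small ball around $x_j'$, so one can pick $x_j \in \spt\mu(s_j)$ close to $x_j'$. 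Upper semi-continuity (Lemma \ref{lem:mono-usc} applied with $\mcfM_i \equiv \mcfM$) then gives $\Theta_{refl(S)}(\mcfM, X_0) \geq \limsup_j \Theta_{refl(S)}(\mcfM, (x_j, s_j)) \geq 1$. To prove the lemma itself, I would take a reflected tangent flow $\tilde\mcfM'$ at $(x,s)$ via Proposition \ref{prop:tangent-flows}; its time-$0$ slice is the (reflected) Euclidean tangent cone of $\mu(s)$ at $x$, which exists as a nontrivial integral stationary cone by an Allard-style monotonicity argument using the first-variation bound of Proposition \ref{prop:fb-finite-var}. Self-similarity propagates nontriviality to the slice $\tilde\mcfM'(-1)$, so picking $y \in \spt\tilde\mcfM'(-1)$ and applying Proposition \ref{prop:eucl-gauss-density} to a Euclidean tangent cone of $\tilde\mcfM'(-1)$ at $y$ yields $\Theta(\tilde\mcfM') \geq 1$ (since an integral stationary cone has Euclidean density $\geq 1$ at its vertex). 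Theorem \ref{thm:tangent-flows} then identifies $\Theta(\tilde\mcfM')$ with $\Theta_{refl(S)}(\mcfM, (x,s))$.

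For the local Hausdorff convergence I would verify the two Kuratowski conditions on $U \times (-1,\infty)$. If $X_i \in \spt\mcfM_i$ and $X_i \to X$, then the set equality for $\mcfM_i$ gives $\Theta_{refl(S_i)}(\mcfM_i, X_i) \geq 1$, Lemma \ref{lem:mono-usc} upgrades this to $\Theta_{refl(S)}(\mcfM, X) \geq 1$, and the set equality for $\mcfM$ places $X \in \spt\mcfM$. Conversely, any $X \in \spt\mcfM$ is a limit of $(x_j, s_j) \to X$ with $x_j \in \spt\mu(s_j)$, and since $\mu_i(s_j) \to \mu(s_j)$ as Radon measures we have $\mu_i(s_j)(B_\delta(x_j)) > 0$ for $i$ large and any $\delta > 0$, hence $\spt\mu_i(s_j) \cap B_\delta(x_j) \neq \emptyset$; diagonalizing produces $X_i \in \spt\mcfM_i$ with $X_i \to X$. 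The principal obstacle is the tangent-flow non-triviality step in the reverse inclusion: the required density lower bound for free-boundary integral varifolds at support points is not explicit in the paper but follows from a routine Allard-style argument using Proposition \ref{prop:fb-finite-var}, with tangent cones taken in the reflected sense near the barrier.
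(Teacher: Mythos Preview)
Your proof is correct and follows the same overall architecture as the paper's: establish the set equality via a density argument combined with upper semi-continuity, then deduce Hausdorff convergence. The difference lies in which dense subset of $\spt\mcfM$ you choose to verify $\Theta_{refl}\geq 1$ on directly.

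You work with all support points of integral time-slices, which forces you to show that the reflected tangent flow at any $(x,s)$ with $x\in\spt\mu(s)$ is nontrivial. As you note, this requires an Allard-type monotonicity argument (via the locally bounded first variation of Proposition~\ref{prop:fb-finite-var}) to get $\Theta_{eucl}(\mu(s),x)\geq 1$ at every support point, and then a further argument linking the time-$0$ slice of the tangent flow to a Euclidean tangent cone. The paper sidesteps this entirely by restricting to the smaller dense set of $(x,t)$ where $t$ is an integral time \emph{and} $\mu(t)$ has an approximate tangent plane at $x$ (which holds for $\mu(t)$-a.e.\ $x$ by rectifiability). At such a point the tangent flow has $\mcfM'(0)=k\haus^n\llcorner P$ for some integer $k\geq 1$, and Proposition~\ref{prop:eucl-gauss-density} immediately gives $\Theta_{refl(S)}(\mcfM,(x,t))=\Theta(\tilde\mcfM')\geq k\geq 1$. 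This avoids any appeal to boundary Allard monotonicity and keeps the argument to a few lines. Your route works, but the paper's choice of dense set is the cleaner one.
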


\begin{proof}
To prove the first claim it will suffice to show $\Theta_{refl} \geq 1$ on a dense subset of $\spt \mcfM$.  For a.e. $t \geq -1$, $\mcfM(t) = \mu_{V(t)}$ for some integral varifold $V(t)$.  Then for $\mu_{V(t)}$-a.e. $x$, $\mu_{V(t)}$ has a tangent plane.  Pick any $(x, t)$ satisfying these two conditions.

Let $\mcfM'$ be a tangent flow of $\mcfM$ at $(x, t)$.  Then $\mcfM'(0) = k \haus^n \llcorner P$ for some $n$-plane $P$.  By Proposition \ref{prop:eucl-gauss-density}, we deduce
\[
1 \leq k \leq \Theta(\mcfM') = \Theta_{refl(S)}(\mcfM, (x, t)).
\]
This proves the first claim.  The second now follows the first, and upper semi-continuity of density.
\end{proof}

\section{Local regularity}

We prove an a priori regularity estimate for classical flows, as in White \cite{white:local-reg}.

\begin{theorem}\label{thm:brakke-reg}
For any $0 < \alpha < 1$, and $\kappa > 0$, there are $\epsilon(n, N, \kappa, \alpha)$, $C(n, N, \kappa, \alpha)$ so that the following holds:

Let $\mcfM$ be a smooth $n$-dimensional mean curvature flow in $U$ supported in $\Omega$, with (classical) free-boundary in $S = \partial\Omega$, with $r_{3,\alpha}(S) \geq \kappa$.  Suppose for some open $\spU \subset U \times \R \subset \R^{N,1}$, $\mcfM$ is proper in $\spU$, and
\begin{equation}\label{eqn:brakke-reg-hyp}
\Theta_{refl(S)}(\mcfM, X, r) < 1 + \epsilon, \quad \forall X \in \spU,\,\, 0 < r < d(X, \partial \spU), 
\end{equation}
then
\begin{equation}\label{eqn:brakke-reg-concl}
r_{2,\alpha}(\mcfM, X)^{-1} d(X, \partial \spU) \leq C .
\end{equation}
\end{theorem}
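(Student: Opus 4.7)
The strategy is White's blow-up scheme adapted to the free-boundary setting, using the reflected Gaussian density in place of the classical one. Suppose the statement fails; then for every $i$ there is a smooth flow $\mcfM_i$ with barrier $S_i$, an open set $\spU_i$ in which $\mcfM_i$ is proper, an $\eps_i \to 0$ satisfying the density hypothesis \eqref{eqn:brakke-reg-hyp}, and some $X_i \in \mcfM_i \cap \spU_i$ for which $r_{2,\alpha}(\mcfM_i, X_i)^{-1} d(X_i, \partial \spU_i) \to \infty$. Since $r_{2,\alpha}(\mcfM_i, \cdot)$ is positive on a neighborhood of $\mcfM_i$ and $d(\cdot, \partial\spU_i) \to 0$ at the boundary, the quantity $F_i(Y) := r_{2,\alpha}(\mcfM_i, Y) / d(Y, \partial\spU_i)$ attains its infimum on $\mcfM_i \cap \spU_i$. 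Replacing $X_i$ by the minimizer, I then parabolically rescale by $\lambda_i = r_{2,\alpha}(\mcfM_i, X_i)^{-1}$ centered at $X_i$, obtaining rescaled flows $\tilde\mcfM_i = \dilD_{\lambda_i}(\mcfM_i - X_i)$ and barriers $\tilde S_i = \lambda_i (S_i - x_i)$. By construction $r_{2,\alpha}(\tilde\mcfM_i, 0) = 1$, while the minimizing property of $X_i$ yields $r_{2,\alpha}(\tilde\mcfM_i, \cdot) \geq 1/2$ on $B^{N,1}_{R_i}(0)$ with $R_i \to \infty$. At the same time $r_{3,\alpha}(\tilde S_i) \geq \lambda_i \kappa \to \infty$, so $\tilde S_i$ converges in $C^3_{loc}$ either to a hyperplane $P$ (if $\lambda_i d(x_i, S_i)$ stays bounded) or recedes to infinity.

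Using Theorem \ref{thm:pretty-mono} together with the density hypothesis to obtain uniform mass bounds on compact sets (via Corollary \ref{cor:mass-bounds}), I apply the free-boundary Brakke compactness Theorem \ref{thm:brakke-compactness} to extract a subsequential limit $\tilde\mcfM_i \to \mcfM_\infty$, where $\mcfM_\infty$ is an ancient free-boundary Brakke flow in a half-space (or all of $\R^N$ if the barriers recede). By Lemma \ref{lem:mono-usc} and the rescaling-invariance of $\Theta_{refl}$, the hypothesis $\Theta_{refl(S_i)}(\mcfM_i, \cdot, \cdot) < 1 + \eps_i$ forces $\Theta_{refl}(\mcfM_\infty, X) \leq 1$ for every $X$ in the spacetime support, while the Corollary after Lemma \ref{lem:mono-usc} gives $\Theta_{refl}(\mcfM_\infty, X) \geq 1$ there. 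Thus $\Theta_{refl}(\mcfM_\infty, \cdot) \equiv 1$ on its support.

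By Theorem \ref{thm:tangent-flows}, every reflected tangent flow $\tilde{\mcfM}'$ of $\mcfM_\infty$ is a self-shrinker with $\Theta(\tilde{\mcfM}') = 1$. The standard rigidity of density-one self-shrinkers (combined with Proposition \ref{prop:eucl-gauss-density} applied to tangent cones to the shrinker's time slice) forces $\tilde{\mcfM}'$ to be a static multiplicity-one hyperplane; in the free-boundary case this hyperplane must be invariant under reflection across $P$, so it passes through $P$ and meets it orthogonally. Since every tangent flow of $\mcfM_\infty$ is a multiplicity-one plane and the density is identically $1$, equality in the monotonicity formula (Theorem \ref{thm:pretty-mono}) gives that $\mcfM_\infty$ is self-shrinking around every spacetime point, which forces $\mcfM_\infty$ to itself be a static multiplicity-one plane (meeting $P$ orthogonally, in the free-boundary case).

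To close the contradiction I upgrade varifold convergence to smooth convergence near the origin: the uniform bound $r_{2,\alpha}(\tilde\mcfM_i, \cdot) \geq 1/2$ on a fixed ball gives uniform $C^{2,\alpha}$ control on the $\tilde\mcfM_i$ as spacetime submanifolds, and parabolic Schauder estimates for the free-boundary mean curvature flow equation (using that $\tilde S_i \to P$ in $C^3$) bootstrap this to uniform $C^{k,\alpha}$ bounds. Hence $\tilde\mcfM_i \to \mcfM_\infty$ in $C^{k}_{loc}$, so $r_{2,\alpha}(\tilde\mcfM_i, 0) \to r_{2,\alpha}(\mcfM_\infty, 0) = \infty$, contradicting $r_{2,\alpha}(\tilde\mcfM_i, 0) = 1$. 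The main obstacle I anticipate is the smooth convergence step near the free boundary: the Schauder theory for the free-boundary mean curvature flow requires the barriers $\tilde S_i$ to converge in $C^{3,\alpha}$ (or similar) and for the orthogonality condition to pass cleanly to the limit, which is precisely why the regularity scale of $S$ appears in the hypotheses via $r_{3,\alpha}(S) \geq \kappa$.
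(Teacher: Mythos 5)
Your blow-up scheme matches the paper's, but there is a genuine gap in the step where you conclude the limit flow is a plane, and the route there is also somewhat convoluted compared to what the paper does.

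\textbf{The gap.} You invoke Lemma \ref{lem:mono-usc} (upper-semi-continuity of the pointwise density under non-rescaled convergence) to get $\Theta_{\refl}(\mcfM_\infty, X) \leq 1$, and then combine with the support Corollary to get $\Theta_{\refl}(\mcfM_\infty, \cdot) \equiv 1$. But Lemma \ref{lem:mono-usc} applies to a convergent sequence of flows, not to a \emph{dilating} sequence, and more importantly it only yields the \emph{pointwise} density. Pointwise density $\equiv 1$ is not enough to deduce that the limit is a static plane: a smooth ancient flow (a grim reaper, say) has pointwise density $1$ everywhere but is certainly not a plane, and there is no forced equality in the monotonicity formula at any positive scale $r$. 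Your sentence ``equality in the monotonicity formula \ldots gives that $\mcfM_\infty$ is self-shrinking'' needs $\Theta(\tilde\mcfM', X, r)$ to be constant in $r$, i.e.\ you need $\Theta(\tilde\mcfM', X, r) \leq 1$ for \emph{all} $r$ and $X$. That is exactly what Lemma \ref{lem:mono-blow-up} (not \ref{lem:mono-usc}) is designed to give you: it passes the density-upper-bound hypothesis through a parabolic dilation at unbounded rate and returns $\Theta(\tilde\mcfM', X, r) \leq \Theta_0$ at every scale. With that in hand the paper's argument is immediate: the reflected limit is nonempty (since $r_{2,\alpha}(\mcfM'_i, 0) = 1$ keeps a fixed patch near the origin), so $\Theta(\tilde\mcfM') \geq 1$, hence $\Theta(\tilde\mcfM') = 1$, hence by Huisken monotonicity $\tilde\mcfM'$ is a multiplicity-one static plane. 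Citing the correct lemma also makes the whole detour through tangent flows of $\mcfM_\infty$ unnecessary.

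\textbf{Two other differences from the paper.} (i) You take $\eps_i \to 0$ directly. The paper instead lets $\bar\eps$ be the infimal bad density and only sets $\bar\eps = 0$ at the very last moment; this is a real structural choice because it is what lets Corollary \ref{cor:brakke-reg}(B) produce a non-flat proper flow with $\Theta = 1 + \bar\eps$, so the sharper version of the theorem drops out of the same proof. For the theorem as stated your direct contradiction is fine. (ii) You pass through the Brakke-flow compactness theorem and then try to ``upgrade'' the convergence. The paper never does this: the uniform lower bound on $r_{2,\alpha}(\mcfM'_i, \cdot)$ on compacta (which you also obtain, from the almost-minimizing choice of $X_i$) gives local $C^2$ subconvergence of the smooth flows directly via Arzel\`a--Ascoli on the graphical representations, with no varifold step. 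This matters because the final contradiction is a genuinely quantitative Schauder estimate: one must straighten the barrier via $\Phi_i$ (Lemma \ref{lem:perturb-graph}), observe $|u_i|_{2} \to 0$ while $[u_i]_{2,\alpha}$ stays bounded, and then use the parabolic Schauder estimate with Neumann condition (Lieberman Thm.\ 4.23) to conclude $[\tilde u_i]_{2,\alpha, B^{n,1}_3} \to 0$, which directly contradicts the normalization $r_{2,\alpha}(\mcfM'_i, 0) = 1$. Your formulation ``$r_{2,\alpha}(\tilde\mcfM_i, 0) \to r_{2,\alpha}(\mcfM_\infty, 0) = \infty$'' is not literally a continuity statement about the regularity scale; it is only justified by this explicit estimate, and that should be where the argument lands.
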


\begin{remark}
If we assume $r_{\ell+1,\alpha}(S) \geq \kappa$, then the same proof verbatim bounds the $r_{\ell,\alpha}$-regularity scale of $\mcfM$.  In this case $\epsilon, C$ may depend on $\ell$.
\end{remark}

\begin{proof}
We follow White.  Let $\bar\epsilon$ be the infimum over all $\epsilon > 0$ for which the Theorem fails.  We wish to show $\bar\epsilon > 0$.

We have a sequence of numbers $\epsilon_i \to \bar\epsilon$, so that for each $i$ there is a smooth mean curvature flow $\mcfM_i$, supported in some smooth domain $\Omega_i$, having classical free-boundary in $S_i = \partial\Omega_i \cap U_i$, and proper in some open $\spU_i \subset U_i \times \R \subset \R^{N,1}$.  This sequence satisfies:

\begin{enumerate}
\item[A)] $r_{3,\alpha}(S) \geq \kappa$, 

\item[B)] $\Theta_{refl(S_i)}(\mcfM_i, X, r) < 1 + \epsilon_i$ for every $X \in \spU_i$, and $0 < r < d(X, \partial \spU_i)$, 

\item[C)] $\sup_{X \in \spU_i} r_{2,\alpha}(\mcfM_i, X)^{-1} d(X, \partial \spU_i) \to \infty$.
\end{enumerate}

By shrinking the $\spU_i$ as necssary, we can assume that the quantity $C)$ is finite for each $i$, and that every $\spU_i$ is bounded.

Choose $X_i$ so that
\[
r_{2,\alpha}(\mcfM_i, X_i)^{-1} d(X_i, \partial \spU_i) \geq \frac{1}{2} \sup_{\spU_i} r_{2,\alpha}(\mcfM_i, \cdot)^{-1} d(\cdot, \partial\spU_i) ,
\]
and set $\lambda_i = r_{2,\alpha}(\mcfM_i, X_i)^{-1}$.

Define the rescaled sequence
\[
\mcfM'_i = \dilD_{\lambda_i}(\mcfM_i - X_i), \quad \Omega_i' = \lambda_i (\Omega_i - x_i), \quad \spU_i' = \dilD_{\lambda_i} (\spU_i - x_i).
\]
Then each $\mcfM'_i$ satisfies the hypothesis of the Theorem, with $\Omega_i'$, $\spU_i'$, and $\epsilon_i$, but additionally $r_{2,\alpha}(\mcfM'_i, 0) = 1$.

By our choice of $\lambda_i$ we have
\[
d(0, \partial \spU_i') = \lambda_i d(X_i, \partial \spU_i) \to \infty ,
\]
and
\begin{align*}
r_{2,\alpha}(\mcfM'_i, \cdot)^{-1} 
&\leq \frac{2d(0, \partial \spU'_i)}{d(X_i, \partial \spU'_i)} \\
&\leq \frac{2d(0, \partial \spU'_i)}{d(0, \partial \spU'_i) - |X_i|} \\
&\to 2 \quad \text{uniformly on compact sets} ,
\end{align*}
and $r_{3,\alpha}(S_i') \geq \lambda_i \kappa \to \infty$.

Therefore, after passing to a subsequence, the $\mcfM'_i$ converge locally in $C^{2}$ to some $C^2$ mean curvature flow $\mcfM'$, which is proper in $\R^{N,1}$.  The limit $\mcfM'$ either has no boundary, or is supported in a half-space $\Omega$, with free-boundary in a plane $P = \partial\Omega$.  In the latter case we have $\Omega_i \to \Omega$ in $C^{3,\alpha}_{loc}$.

Let $\tilde\mcfM'$ be the Brakke flow without boundary obtained by reflecting $\mcfM'$ about $P$ if it exists (Proposition \ref{prop:brakke-reflect}), or simply $\mcfM'$ if $P$ does not.  By the free-boundary condition and interior Schauder estimates, $\mcfM'$ is entirely smooth.

By Lemma \ref{lem:mono-blow-up}, we have
\[
\Theta(\tilde\mcfM', X, r) \leq 1 + \bar\epsilon, \quad \forall X, \,\, r.
\]

Suppose, towards a contradiction, that $\bar\epsilon = 0$.  Then by the standard monotonicity formula we must have (after a suitable rotation in space)
\[
\tilde\mcfM' = \{0\}^{N - n} \times \R^n \times (-\infty, T], 
\]
for some possibly infinite $T \geq 0$.

Therefore on any compact set and $i$ large we have
\[
r_2(\mcfM'_i, \cdot)^{-1} \to 0, \quad r_{2,\alpha}(\mcfM'_i, \cdot)^{-1} \leq 3,
\]
and so after a suitable rotation, 
\[
\mcfM'_i \cap (B^{N-n}_5 \times B^n_5 \times (-\infty, T_i]) \subset \graph(u_i)
\]
where $u_i$ is defined on $B^{n,1}_5 \cap (Q_i \times (-\infty, T_i])$, for some smooth domain $Q_i$ (a perturbation of $\Omega_i \cap (\R^n \times \{0\})$), and some $T_i \geq 0$ converging to $T$.

The $u_i$ satisfy
\begin{equation}\label{eqn:norms-of-u}
|u_i|_{2, B^{n,1}_5} \to 0, \quad [u_i]_{2,\alpha, B^{n,1}_5} \leq c(n, \alpha).
\end{equation}
We wish to show that $[u_i]_{2,\alpha, B^{n,1}_2} \to 0$, as this will contradict our normalization $r_{2,\alpha}(\mcfM'_i, 0) = 1$.

If $P$ lies outside $B^{N-n}_5 \times B^n_5$, then the proof reduces to the boundaryless case considered by White.  We shall therefore assume $S_i \cap (B^n_5 \times B^{N-n}_5) \neq\emptyset$ for all $i$ large.

Let $\Phi_i$ be the map \eqref{eqn:straighten} straightening out the barrier surface $S_i$, centered at any point in $S_i\cap (B^{N-n}_5 \times B^n_5)$.  By equations \eqref{eqn:reflect-bounds} we have that
\begin{equation}\label{eqn:norms-of-Phi}
|\Phi_i^{-1} - Id| \to 0, \quad |D\Phi_i^{-1} - Id| \to 0, \quad [\Phi_i^{-1}]_{2,\alpha} \to 0
\end{equation}
uniformly on compact sets.

So using Lemma \ref{lem:perturb-graph}, we have $\Phi^{-1}_i(\graph(u_i)) = \graph (\tilde u_i)$, where
\begin{equation}\label{eqn:perturbed-u}
\tilde u_i(x, t) = u_i(x + \xi(x, t), t) + \eta(x, t)
\end{equation}
is defined on some half-ball $B^{n,1}_4 \times (\text{half-space} \times (-\infty, T_i])$, with standard Neumann boundary conditions.  The $\xi$, $\eta$ satisfy estimates
\begin{equation}\label{eqn:norms-of-xi}
|\xi|_{2, \alpha, B^{n,1}_4} \to 0, \quad |\eta|_{2,\alpha, B^{n,1}_4} \to 0.
\end{equation}
It will therefore suffice to show that $[\tilde u_i]_{2,\alpha, B^{n,1}_3} \to 0$.

Each $\tilde u_i$ satisfies the graphical mean curvature flow equation in the pullback metric $\gamma_i = \Phi_i^*\delta$:
\[
\partial_t \tilde u_i - g^{pq} D_{pq} \tilde u_i = 0,
\]
where $g^{pq}$ is the inverse of the matrix $g_{pq} = \gamma_i(e_p + D_p \tilde u_i, e_q + D_q \tilde u_i)$.  From relations \eqref{eqn:norms-of-Phi}, \eqref{eqn:norms-of-u}, \eqref{eqn:norms-of-xi} and standard Holder relations \eqref{eqn:holder-product}, \eqref{eqn:sp-holder-comp}, we have
\[
[\delta^{pq} - g^{pq}]_{0, \alpha, B^{n,1}_{7/2}} \to 0.
\]

Therefore by the parabolic Schauder estimates with Neumann boundary conditions (e.g. Theorem 4.23 in \cite{lieberman:parabolic}), we deduce $[\tilde u_i]_{2,\alpha, B^{n,1}_3} \to 0$.  This gives the required contradiction.
\end{proof}

\begin{corollary}[White \cite{white:local-reg}]\label{cor:brakke-reg}
Let $\bar\epsilon > 0$ be as in the proof of Theorem \ref{thm:brakke-reg}.  Then:

\begin{enumerate}
\item[A)] If $\mcfM'$ is a proper, smooth mean curvature flow in $\R^N$, with $\Theta(\mcfM') < 1 + \bar\epsilon$, then $\mcfM'$ is flat.

\item[B)] There is a non-flat, proper, smooth mean curvature flow $\mcfM$ in $\R^N$ with $\Theta(\mcfM) = 1 + \bar\epsilon$.

\item[C)] The $\epsilon$ of Theorem \ref{thm:brakke-reg}. depends only on $n, N$, even for estimates on $r_{\ell,\alpha}$. (of course the constant $C$ may still depend on $\kappa, \alpha, \ell$).
\end{enumerate}
\end{corollary}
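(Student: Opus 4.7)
The plan is to leverage the definition of $\bar\epsilon$ as an infimum, combined with Huisken's monotonicity and the recentering/rescaling machinery already developed inside the proof of Theorem \ref{thm:brakke-reg}.

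\medskip

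For part A), suppose $\mcfM'$ is a proper, smooth, ancient mean curvature flow in $\R^N$ with $\Theta(\mcfM') < 1+\bar\epsilon$. Huisken's monotonicity formula gives $\Theta(\mcfM', X, r) \leq \Theta(\mcfM') < 1+\bar\epsilon$ at every $X$ and $r$, so fix some $\epsilon' < \bar\epsilon$ with $\Theta(\mcfM', X, r) \leq 1+\epsilon'$ uniformly. By the infimum definition of $\bar\epsilon$, the conclusion of Theorem \ref{thm:brakke-reg} is valid with $\epsilon = \epsilon'$. Applied with $\spU = \R^{N,1}$, in which $\mcfM'$ is proper and $\dist(X, \partial\spU) = \infty$, the theorem forces $r_{2,\alpha}(\mcfM', X) = \infty$ at every $X$, i.e.\ $\mcfM'$ is a flat plane.

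\medskip

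For part B), pick $\epsilon_i \nearrow \bar\epsilon$ and, by the definition of $\bar\epsilon$, free-boundary flows $\mcfM_i$ witnessing the failure of the regularity conclusion under the density hypothesis with $\epsilon_i$. Rerun the recentering/rescaling procedure from the proof of Theorem \ref{thm:brakke-reg}: renormalize so that $r_{2,\alpha}(\mcfM_i', 0) = 1$, $r_{2,\alpha}^{-1}$ stays uniformly bounded on compacts, and $\dist(0, \partial \spU_i') \to \infty$ (so that the rescaled barriers also straighten). A subsequence converges in $C^2_{loc}$ to a proper, eternal, $C^2$ mean curvature flow $\mcfM'$, either boundaryless or with classical free-boundary in a plane $P$. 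Reflect across $P$ if present (Proposition \ref{prop:brakke-reflect}) to obtain a boundaryless $\tilde\mcfM'$; the classical orthogonality of $\mcfM'$ at $P$ gives an odd-type $C^{2,\alpha}$ extension across $P$, and interior parabolic Schauder estimates promote $\tilde\mcfM'$ to $C^\infty$. Lemma \ref{lem:mono-blow-up}, applied with $R_i = \dist(0, \partial \spU_i')$, yields $\Theta(\tilde\mcfM', X, r) \leq 1+\bar\epsilon$ uniformly. The normalization $r_{2,\alpha}(\mcfM', 0) = 1$ rules out flatness, and part A) applied to $\tilde\mcfM'$ in the boundaryless setting then forces $\Theta(\tilde\mcfM') = 1+\bar\epsilon$ exactly.

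\medskip

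For part C), let $\bar\epsilon_0(n, N)$ denote the critical constant in White's \cite{white:local-reg} theorem for boundaryless smooth flows in $\R^N$. Any smooth boundaryless MCF in $\R^N$ is vacuously a free-boundary flow (the barrier is empty, or placed at infinity with any $\kappa$), so the free-boundary $\bar\epsilon \leq \bar\epsilon_0$. Conversely, the critical flow $\tilde\mcfM'$ from part B) is a smooth, proper, non-flat, boundaryless MCF in $\R^N$ of density exactly $1+\bar\epsilon$, whence White's sharp statement gives $\bar\epsilon \geq \bar\epsilon_0$. Hence $\bar\epsilon = \bar\epsilon_0(n, N)$, independent of $\kappa, \alpha, \ell$; the same reasoning handles the $r_{\ell,\alpha}$-variant, as the blow-up limit is smooth and its density is still controlled by the same universal $\bar\epsilon_0$.

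\medskip

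The main obstacle sits in part B), and specifically in the smooth reflection of the limit across its free-boundary plane $P$. One must verify both that $C^2_{loc}$ convergence together with the classical Neumann-type orthogonality at $P$ survives into the limit (so that the odd extension $\tilde\mcfM'$ is genuinely $C^{2,\alpha}$ across $P$ and then bootstraps to $C^\infty$ by Schauder), and that the reflected density $\Theta_{refl(S_i)}$ of the approximants passes in the limit to the ordinary $\Theta$ of $\tilde\mcfM'$ without jump — this is the role of Lemma \ref{lem:mono-blow-up}, whose hypothesis $\lambda_i R_i \to \infty$ must be read off from the specific rescaling chosen in Theorem \ref{thm:brakke-reg}.
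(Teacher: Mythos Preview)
Your approach is essentially the paper's, and parts A) and C) are fine. There is, however, a genuine logical slip in part B): you take $\epsilon_i \nearrow \bar\epsilon$, i.e.\ $\epsilon_i < \bar\epsilon$. But $\bar\epsilon$ is defined as the infimum of the $\epsilon$ for which the theorem \emph{fails}, so for $\epsilon_i < \bar\epsilon$ the theorem \emph{holds} and there are no ``flows $\mcfM_i$ witnessing the failure of the regularity conclusion'' to extract. The blow-up argument you want to rerun needs a sequence in the failure set, hence $\epsilon_i \searrow \bar\epsilon$ (exactly as in the proof of Theorem \ref{thm:brakke-reg}). With that correction your construction goes through: the limit $\tilde\mcfM'$ is smooth, proper, boundaryless, has $\Theta(\tilde\mcfM') \leq 1+\bar\epsilon$ by Lemma \ref{lem:mono-blow-up}, is non-flat since $r_{2,\alpha}(\mcfM',0)=1$, and then part A) forces $\Theta(\tilde\mcfM') = 1+\bar\epsilon$. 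The paper states this more tersely by simply pointing to the limit flow already built inside the proof of Theorem \ref{thm:brakke-reg}.

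A minor remark on A): the recentering bound $\Theta(\mcfM', X, r) \leq \Theta(\mcfM')$ is stated in the paper only for $X=(x,t)$ with $t \leq 0$, which is why the paper restricts to $\mcfM' \cap \{t \leq 0\}$ before applying the theorem on balls $B^{N,1}_R$. Your direct use of $\spU = \R^{N,1}$ glosses over this, though the conclusion is unaffected.
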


\begin{proof}
The proof is in White \cite{white:local-reg}.  Since it is very short we reproduce it here.

For A), one can check easily that for every $X = (x, t)$ and $r$, with $t \leq 0$, we have
\[
\Theta(\mcfM', X, r) \leq \Theta(\mcfM').
\]
Therefore if $\Theta(\mcfM') < 1 + \bar\epsilon$, we can apply Theorem \ref{thm:brakke-reg} to $\mcfM' \cap \{ t \leq 0 \}$ to deduce
\[
r_{2,\alpha}(\mcfM' \cap \{ t \leq 0 \}, X)^{-1} d(X, \partial B^{N,1}_R) \leq C
\]
for every $R$ and $X$.  Therefore $\mcfM''$ is flat, and so is $\mcfM'$.

The flow of B) is simply the smooth, proper limit flow obtained in the proof of Theorem \ref{thm:brakke-reg}  C) is immediate from the existence of $\mcfM$ of B).
\end{proof}

\begin{corollary}[Brakke \cite{brakke}, White \cite{white:local-reg}]\label{cor:least-density}
There is an $\eta(n, N) \leq \bar\eps$ so that if $\mcfM$ is a self-shrinking Brakke flow in $\R^N$, having $\Theta(\mcfM) < 1 + \eta$, then $\mcfM$ is flat.
\end{corollary}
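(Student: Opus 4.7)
The plan is to argue by a dichotomy: a non-flat self-shrinking Brakke flow $\mcfM$ in $\R^N$ is either smooth (as a mean curvature flow in spacetime), or else its time-slice $V := \mcfM(-1)$ carries a genuine singular point. Each alternative will furnish a density gap, and I take $\eta(n,N)$ to be the minimum of the two.

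If $\mcfM$ is a proper smooth mean curvature flow in $\R^N$, then Corollary \ref{cor:brakke-reg}.A directly says that $\Theta(\mcfM) < 1 + \bar\eps$ forces $\mcfM$ to be flat. So for $\eta \leq \bar\eps$ this alternative is immediate.

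Otherwise, let $V = \mcfM(-1)$, which as a self-shrinker slice is an integral $n$-varifold satisfying the Gaussian-weighted stationarity condition $H_V = x^\perp/2$, and in particular has locally bounded first variation. Suppose $V$ has a singular point $p$, meaning at least one tangent cone $V'$ of $V$ at $p$ is not a multiplicity-1 plane. Under parabolic blow-up at $p$, the mean curvature scales by $\lambda \to 0$ and the weight $e^{-|x|^2/4}$ becomes trivial, so $V'$ is a Euclidean-stationary integral $n$-cone. Combining Proposition \ref{prop:eucl-gauss-density} with the property (recorded in the excerpt) that $\Theta(\mcfM, X) \leq \Theta(\mcfM)$ for every spacetime $X$, one obtains
\[
\Theta_{eucl}(V') \;=\; \Theta(\mcfM, (p, -1)) \;\leq\; \Theta(\mcfM) \;<\; 1 + \eta.
\]
Because $V'$ is a cone its Euclidean density is independent of scale, so Allard's regularity theorem yields a constant $\eta_1(n, N) > 0$ such that, provided $\eta \leq \eta_1$, the cone $V'$ is $C^{1,\alpha}$-graphical over a plane in a neighborhood of the origin. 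A cone which is $C^1$ through its vertex must coincide with its tangent plane there, hence $V'$ is a multiplicity-1 plane --- contradicting the choice of $p$ as a singular point.

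Setting $\eta(n,N) := \min\{\bar\eps, \eta_1\}$ rules out both alternatives and gives the corollary. The main technical point to pin down carefully is that tangent cones to $V$ are genuinely Euclidean-stationary (so that Allard applies) and that the chain of identities relating the cone's Euclidean density to the flow's Gaussian density at $(p,-1)$ goes through --- this is essentially the content of Proposition \ref{prop:eucl-gauss-density}, together with the observation that the self-shrinker structure makes $\Theta(\mcfM,\cdot)$ globally bounded by $\Theta(\mcfM)$.
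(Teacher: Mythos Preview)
Your proposal is correct and follows essentially the same approach as the paper: both combine Allard's density gap for stationary integral cones (via Proposition \ref{prop:eucl-gauss-density} to identify $\Theta_{eucl}(V')$ with $\Theta(\mcfM,(p,-1)) \leq \Theta(\mcfM)$) with Corollary \ref{cor:brakke-reg}.A for the smooth case. The paper organizes this as ``first show $V = \mcfM(-1)$ is entirely smooth, then apply the smooth result to $\mcfM \cap \{t \leq -1\}$,'' whereas you phrase it as a dichotomy-by-contradiction, but the content is identical; just note that the restriction to $\{t \leq -1\}$ (rather than all of $\mcfM$) is what guarantees properness when invoking Corollary \ref{cor:brakke-reg}.A.
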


\begin{proof}
Take $1 + \eta$ the lesser of $1 + \bar\eps$ (of Corollary \ref{cor:brakke-reg}), and the least density of any non-flat minimal $n$-cone in $\R^N$ (which is $ > 1$ by Allard \cite{allard:first-variation}).

Suppose $\mcfM$ is a self-shrinker in $\R^N$ having density $\Theta(\mcfM) < 1 + \eta$, and write $\mcfM(-1) = \mu_V$.  Recall that $V$ is minimal in a weighted metric.  So at each point the Euclidean density exists, and by Proposition \ref{prop:eucl-gauss-density} must satisfy $\Theta_{eucl} < 1 + \eta$.  By Allard's Theorem and our choice of $\eta$, $\mcfM(-1)$ must be completely smooth.

We can therefore apply Corollary \ref{cor:brakke-reg} part A) to the smooth, proper flow $\mcfM \cap \{ t \leq -1 \}$ to deduce $\mcfM$ is flat.
\end{proof}

\begin{remark}
The least density is attained either by the flow of Corollary \ref{cor:brakke-reg} part B), or by the static least density minimal cone.
\end{remark}

Theorem \ref{thm:brakke-reg} implies a Brakke regularity Theorem for smooth flows, and limits of smooth flow.  We require some definitions.
\begin{definition}
Let $\mathcal S(n, N)$ be the collection of $n$-Brakke flows $\mcfM$ in $\R^N$ with free-boundary satisfying:

\begin{enumerate}
\item[A)] $\mcfM$ is supported inside some smooth domain $\Omega \subset \R^N$, and has free-boundary in $\partial \Omega$, 

\item[B)] $r_{3,\alpha}(\partial\Omega) > 0$.
\end{enumerate}
\end{definition}

\begin{definition}
Given a free-boundary Brakke flow $\mcfM$, we let $r_{k,\alpha}(\mcfM, X)$ be the largest radius $r$ so that $\spt\mcfM \cap B^{n,1}_r(X)$ is a smooth, proper mean curvature flow with classical free-boundary (as defined in Section \ref{section:spacetime-flows}), with $r_{k,\alpha}(\spt\mcfM, X) \geq r$.  We say $X$ is a \emph{regular point} if $r_{2,\alpha}(\mcfM, X) > 0$.
\end{definition}

\begin{definition}
Let $\mcfM_i$, $\mcfM$ be a sequence of Brakke flows in $\mathcal S(n, N)$, supported in $\Omega_i$, $\Omega$, with free-boundary in $\partial\Omega_i$, $\partial\Omega$.  We say $\mcfM_i \to \mcfM$ \emph{as Brakke flows with free-boundary} if the following holds:

\begin{enumerate}
\item[A)] $\inf r_{3,\alpha}(\partial\Omega_i) > 0$, for some fixed $0 < \alpha < 1$, 

\item[B)] $\Omega_i \to \Omega$ in $C^{3,\alpha}_{loc}$, 

\item[C)] if $I_i$, $I$ are the time-domains of definition for $\mcfM_i$, $\mcfM$, then $I_i \to I$, and for each $t$ in the interior of $I$, $\mcfM_i(t) \to \mcfM(t)$ as Radon measures.
\end{enumerate}
\end{definition}

\begin{theorem}\label{thm:brakke-closure}
Let
\[
\mathcal D(n, N) = \left\{ \begin{array}{c} \text{$\mcfM \in \mathcal S(n, N)$, such that if any (reflected) tangent flow at $X$ is a} \\ \text{multiplicity-1 (quasi-)static plane, then $X$ is a regular point.} \end{array} \right\} .
\]
Then $\mathcal D$ is closed under convergence of free-boundary Brakke flows.

Further, there is an $\eta(n, N)$ so that if $\mcfM \in \mathcal D$ has free-boundary in $\partial\Omega$, and $\Theta_{refl(\partial\Omega)}(\mcfM, X) < 1 + \eta$, then $X$ is a regular point.
\end{theorem}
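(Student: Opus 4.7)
The plan is to establish the second (density-rigidity) statement first, since it follows almost immediately from Corollary \ref{cor:least-density} and the defining property of $\mathcal D$, and then bootstrap it into the closedness argument together with the local regularity theorem.

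For the density rigidity, take $\eta(n, N)$ to be the constant of Corollary \ref{cor:least-density}. Suppose $\mcfM \in \mathcal D$ has free-boundary in $\partial\Omega$ and $\Theta_{refl(\partial\Omega)}(\mcfM, X) < 1 + \eta$. Pick any tangent flow $\mcfM'$ at $X$ via Proposition \ref{prop:tangent-flows}. By Theorem \ref{thm:tangent-flows}, the associated reflected tangent flow $\tilde\mcfM'$ is an ancient self-shrinking Brakke flow in $\R^N$ (without boundary) with $\Theta(\tilde\mcfM') = \Theta_{refl(\partial\Omega)}(\mcfM, X) < 1 + \eta$. Corollary \ref{cor:least-density} then forces $\tilde\mcfM'$ to be flat, i.e.\ a multiplicity-1 hyperplane, so $\mcfM'$ itself is a multiplicity-1 (quasi-)static plane (possibly a half-plane meeting its free-boundary plane orthogonally). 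Membership in $\mathcal D$ immediately yields that $X$ is a regular point.

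For closedness, let $\mcfM_i \to \mcfM$ as free-boundary Brakke flows in $\mathcal S(n, N)$ with each $\mcfM_i \in \mathcal D$, and pick $X$ at which some (reflected) tangent flow of $\mcfM$ is a multiplicity-1 (quasi-)static plane; by Theorem \ref{thm:tangent-flows}, $\Theta_{refl(\partial\Omega)}(\mcfM, X) = 1$. A contradiction argument using Lemma \ref{lem:mono-usc}, applied to any hypothetical subsequence $Y_{i_k} \to X$ with $Y_{i_k} \in \spt\mcfM_{i_k}$ and $\Theta_{refl}(\mcfM_{i_k}, Y_{i_k}) \geq 1 + \eta$, produces a spacetime ball $B^{N,1}_r(X)$ and index $i_0$ such that $\Theta_{refl(\partial\Omega_i)}(\mcfM_i, Y) < 1 + \eta$ for every $i \geq i_0$ and $Y \in \spt\mcfM_i \cap B^{N,1}_r(X)$. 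The second assertion applied to each $\mcfM_i \in \mathcal D$ then ensures every such $Y$ is a regular point of $\mcfM_i$, so that $\mcfM_i$ is a smooth free-boundary mean curvature flow inside $B^{N,1}_r(X)$. To transfer smoothness to the limit, I would apply Theorem \ref{thm:brakke-reg} to each $\mcfM_i$ on a slightly shrunken ball: choose $r_0 \leq r$ with $\Theta_{refl}(\mcfM, X, r_0) < 1 + \bar\epsilon/4$ (possible by Theorem \ref{thm:pretty-mono}); transfer this to $\Theta_{refl}(\mcfM_i, Y, r_0) < 1 + \bar\epsilon/2$ uniformly in a smaller spacetime neighborhood for $i$ large, via Lemma \ref{lem:mono-usc} at the fixed positive scale $r_0$; and use monotonicity of $e^{A\sqrt{s}}\Theta_{refl}(\mcfM_i, Y, s) + AMs^2$ to extend the bound to every $s \leq r_0$. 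Theorem \ref{thm:brakke-reg} then supplies uniform $r_{2,\alpha}$ bounds for the smooth flows $\mcfM_i$ on a fixed neighborhood of $X$, and an Arzel\`a-Ascoli argument combined with the Brakke convergence $\mcfM_i \to \mcfM$ forces $\mcfM$ itself to be a smooth free-boundary flow near $X$. Hence $X$ is a regular point of $\mcfM$, and $\mcfM \in \mathcal D$.

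The main technical obstacle is the last step: extracting \emph{all-scale} density control along the sequence from the single pointwise identity $\Theta_{refl}(\mcfM, X) = 1$. Lemma \ref{lem:mono-usc} is only a limiting upper-semi-continuity statement and does not by itself control intermediate scales for the $\mcfM_i$. The remedy, outlined above, is to trade pointwise smallness at $X$ for fixed-positive-scale smallness via the monotonicity formula of Theorem \ref{thm:pretty-mono}, transfer that bound along the sequence using approximate continuity in $Y$ at the fixed scale $r_0$, and then re-apply monotonicity on the sequence to recover control at all smaller scales. Additional care is needed near the barrier so that the reflected geometries genuinely converge in the limit; this is secured by the uniform lower bound $\inf_i r_{3,\alpha}(\partial\Omega_i) > 0$ implicit in the definition of Brakke convergence within $\mathcal S(n, N)$.
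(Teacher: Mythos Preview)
Your proof is correct and follows essentially the same strategy as the paper: establish the density-rigidity statement first via Corollary \ref{cor:least-density} and Theorem \ref{thm:tangent-flows}, then for closedness use upper-semi-continuity to get a neighborhood where the $\mcfM_i$ satisfy the density hypothesis of Theorem \ref{thm:brakke-reg}, obtain uniform $r_{2,\alpha}$ bounds, and pass to the limit (the paper packages your Arzel\`a--Ascoli step as Lemma \ref{lem:boundary-reg-points}).

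The one place you take an unnecessary detour is the ``technical obstacle'' you flag. You write that Lemma \ref{lem:mono-usc} ``does not by itself control intermediate scales for the $\mcfM_i$,'' but in fact it does: the lemma bounds $\limsup_i \Theta_{refl(S_i,\kappa)}(\mcfM_i, X_i, r_i)$ for \emph{every} pair of sequences $X_i \to X$, $r_i \to 0$, not just $r_i = 0$. Consequently the paper runs a single contradiction: if no open $\spU$ works, take $\spU_k = B^{N,1}_{1/k}(X)$ and $i_0 = k$ to extract $i_k > k$, $X_k \in \spU_k$, $r_k < 1/k$ with $\Theta_{refl}(\mcfM_{i_k}, X_k, r_k) \geq 1 + \eta$, which immediately contradicts Lemma \ref{lem:mono-usc}. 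This yields the all-scale bound \eqref{eqn:brakke-reg-hyp} directly, without your two-step fixed-scale-then-monotonicity workaround. Your route is valid, just longer than needed.
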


\begin{remark}
By definition, any smooth, proper mean curvature flow (with classical free-boundary) lies in $\mathcal D$.
\end{remark}

\begin{proof}
The second assertion is immediate from Corollary \ref{cor:least-density}: If $\Theta_{refl(\partial\Omega)}(\mcfM, X) < 1 + \eta$, then any (reflected) tangent flow $\mcfM'$ at $X$ must satisfy $\Theta(\mcfM') < 1 + \eta$ also (Theorem \ref{thm:tangent-flows}).  Choosing $\eta$ as in Corollary \ref{cor:least-density}, we must have that $\mcfM'$ is flat.

We prove the first assertion.  Let $\mcfM_i \in \mathcal D$ be a sequence converging to some $\mcfM \in \mathcal S$.  Suppose $\Theta_{refl(\partial \Omega)}(\mcfM, 0) = 1$.

We wish to show there is an open set $\spU$, and an $i_0$, so that
\begin{equation}\label{eqn:brakke-closure-density}
\Theta_{refl(\partial\Omega_i, \kappa)}(\mcfM_i, X, r) < 1 + \eta \quad \forall i > i_0, \quad \forall X \in \spU, \quad \forall r < d(X_i, \partial \spU) .
\end{equation}
Here $\kappa > 0$ is chosen so that $\kappa < \inf_i r_{3,\alpha}(\partial\Omega_i)$, where $\Omega_i$ is the domain supporting $\mcfM_i$. 

Suppose, towards a contradiction, \eqref{eqn:brakke-closure-density} fails.  Then (passing to a subsequence as necessary) we have a a sequence $R_i \to 0$, and $X_i \in \cap B_{R_i}^{N,1}(0)$, so that
\[
\Theta_{refl(\partial\Omega_i, \kappa)}(\mcfM_i, X_i, R_i) \geq 1 + \eta.
\]
But then since $X_i \to 0$, $R_i \to 0$, this contradicts upper semi-continuity (Lemma \ref{lem:mono-usc}).  Therefore \eqref{eqn:brakke-closure-density} must hold for some domain $\spU$.

We deduce that (for $i$ large) $\mcfM_i \cap \spU$ is regular, and satisfies the conditions of Theorem \ref{thm:brakke-reg}.  So we have an a priori estimate of the form
\[
\sup_{\spU} r_{2,\alpha}(\mcfM_i, \cdot)^{-1} d(\cdot, \partial\spU) \leq C ,
\]
independent of $i$.  By Lemma \ref{lem:boundary-reg-points} $0$ is a regular point of $\mcfM$.
\end{proof}

\begin{corollary}\label{cor:brakke-classical}
Let $F : M^n \times [0, T) \to \overline{\Omega} \subset \R^N$ be a smooth mean curvature flow, with classical free-boundary in $S = \partial\Omega$.  Let $\mcfM$ be free-boundary Brakke flow induced by $F$.

If for any $x$ we have $\Theta_{refl(S)}(\mcfM, (x, T)) < 1 + \eta$, then $(x, T)$ is a regular point of $\mcfM$.  In other words, $F$ extends smoothly up to time $T$ near $x$.
\end{corollary}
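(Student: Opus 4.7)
The plan is to deduce this as an essentially immediate corollary of Theorem \ref{thm:brakke-closure}. First, I would observe that the Brakke flow $\mcfM$ induced by $F$ is, on the open time interval $[0,T)$, a smooth proper mean curvature flow with classical free-boundary in $S = \partial\Omega$, hence by the Remark following Theorem \ref{thm:brakke-closure} it lies in $\mathcal{D}(n,N)$. If necessary, I would extend $\mcfM$ trivially (by the zero measure) to times $t \geq T$ so that it is globally a free-boundary Brakke flow; this does not affect the reflected Gaussian density at $(x,T)$, since $\Theta_{refl(S)}(\mcfM, (x,T))$ is computed via the monotonicity formula using only data at times $t < T$.

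Once $\mcfM \in \mathcal{D}(n,N)$, the second assertion of Theorem \ref{thm:brakke-closure} applies verbatim: the hypothesis $\Theta_{refl(S)}(\mcfM, (x,T)) < 1+\eta$ forces $(x,T)$ to be a regular point of $\mcfM$. By definition, this means there is some $r > 0$ so that $\spt \mcfM \cap B^{n,1}_r((x,T))$ is a smooth, proper mean curvature flow with classical free-boundary in $S$, with $r_{2,\alpha}$-regularity scale bounded below.

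To conclude, I would note that on the open set of times $t \in (T - r^2, T)$ the smooth flow guaranteed by regularity coincides with the image of $F_t$ (by uniqueness of smooth classical free-boundary flow from given data, together with the fact that both have the same spacetime support near $(x,T)$). Therefore the flow extends smoothly up to, and indeed slightly beyond, time $T$ in a neighborhood of $x$, which is exactly what it means for $F$ to extend smoothly up to time $T$ near $x$.

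There is no real obstacle here — the only mild subtlety is verifying that the ``boundary-in-time'' nature of $(x,T)$ causes no issue, but since both the monotonicity formula and the regular-point definition only reference times $\leq T$ in this situation, and since the Brakke flow machinery of Theorems \ref{thm:brakke-reg} and \ref{thm:brakke-closure} was set up to handle exactly such terminal points, the argument reduces to invoking the closure theorem.
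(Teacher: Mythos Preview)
Your argument has a gap at the point where you assert $\mcfM \in \mathcal{D}(n,N)$. The Remark following Theorem \ref{thm:brakke-closure} says that a smooth, proper mean curvature flow lies in $\mathcal{D}$; this applies because every point of such a flow is already regular, so the defining implication is vacuous. But the flow $\mcfM$ you want to apply Theorem \ref{thm:brakke-closure} to is the one defined on $[0,T]$ (or its zero-extension), and this flow is \emph{not} known to be smooth and proper at time $T$ --- indeed, that is exactly what you are trying to prove. Extending by zero does nothing to verify the $\mathcal{D}$-condition at points $(x,T)$: you would need to know that a planar tangent flow at $(x,T)$ forces regularity there, which is the content of the corollary itself. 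So invoking the Remark for the full flow is circular.

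The paper closes this gap by exploiting the \emph{closure} part of Theorem \ref{thm:brakke-closure} rather than its second assertion alone. One takes $T_i \nearrow T$ and sets $\mcfM_i = \mcfM \cap \{t \leq T_i\}$. Each $\mcfM_i$ genuinely is smooth and proper (the classical flow exists smoothly on $[0,T_i]$), so $\mcfM_i \in \mathcal{D}$ by the Remark. Since $\mcfM_i \to \mcfM$ as free-boundary Brakke flows and $\mathcal{D}$ is closed under such convergence, one concludes $\mcfM \in \mathcal{D}$. Only then does the density hypothesis $\Theta_{refl(S)}(\mcfM,(x,T)) < 1+\eta$ yield regularity at $(x,T)$. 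Your outline becomes correct once you insert this approximation-by-truncation step.
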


\begin{proof}
Choose a increasing sequence $T_i \to T$.  Each flow $\mcfM_i = \mcfM \cap \{ t \leq T_i \}$ is smooth, proper, and hence $\mcfM_i \in \mathcal D(n, N)$.  Further, it is clear that $\mcfM_i \to \mcfM$ as free-boundary Brakke flows.  By Theorem \ref{thm:brakke-closure}, $\mcfM \in \mathcal D(n, N)$ also.  Therefore, by assumption, $\spt\mcfM$ is smooth near $(x, T)$.
\end{proof}

\section{Elliptic regularization}

We adapt the elliptic regularization construction of Ilmanen \cite{ilmanen:elliptic-reg} to the free-boundary setting.  All the real work here is Ilmanen's or White's, we merely verify the constructions work with our notion of free-boundary Brakke flow.

Throughout this section we will assume $\Omega \subset \R^N$ is a domain with smooth boundary $S = \partial\Omega$, satisfying $r_{3,\alpha}(S) > 0$; and $\Sigma$ is an integral $n$-current in $\Omega$, with finite mass, compact support, and satisfying additionally $\haus^{n+1}(\spt\Sigma) = 0$.

The main result of this section is the following.
\begin{theorem}[an adaption of Ilmanen \cite{ilmanen:elliptic-reg}] \label{thm:existence}
Then there is a Brakke flow $\mcfM \equiv (\mu(t))_{t \geq 0}$ in $\R^N$ supported in $\Omega$ with free-boundary in $\partial\Omega$, so that
\begin{enumerate}
\item[A)] $\lim_{t \to 0} \mu(t) = \mu_\Sigma$ , 

\item[B)] $\mcfM \in \mathcal D(n, N)$, where $\mathcal D$ as in Theorem \ref{thm:brakke-closure},

\item[C)] there is an integral $(n+1)$-current $T$ in $\Omega \times [0, \infty)$ satisfying:
\begin{enumerate}
\item[i)] $\partial T = \Sigma$, 
\item[ii)] $||T\llcorner (\Omega \times B)|| \leq (|B| + |B|^{1/2}) ||\Sigma||$ for any interval $B$, 

\item[iii)] $\mu(t) \geq \mu_{\partial (T \llcorner \Omega \times (t, \infty))}$ for every $t \geq 0$.
\end{enumerate}
\end{enumerate}
(Ilmanen calls $\mcfM$ satisfying condition C) an ``enhanced motion'').

Further, if $\Sigma$ is smooth, embedded, with classical free-boundary in $\partial\Omega$, then for some $\delta(\Sigma) > 0$, $\mcfM \cap \{ 0 \leq t \leq \delta\}$ is smooth and proper.
\end{theorem}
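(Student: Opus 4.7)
The plan is to adapt Ilmanen's elliptic regularization \cite{ilmanen:elliptic-reg} to the free-boundary setting, using the compactness and regularity results already proven in this paper to pass to the limit. For each $\epsilon > 0$, work in $\R^{N+1}$ with a vertical coordinate $z$, and let $\admis_\epsilon$ denote the class of integral $(n+1)$-currents $T$ compactly supported in $\overline\Omega \times [0, \infty)$ satisfying
\[
\partial T \llcorner (\Omega \times \R) = \Sigma \times \{0\},
\]
with the rest of $\partial T$ permitted to lie in $\partial\Omega \times [0, \infty)$. Minimize the weighted area $F_\epsilon(T) = \int e^{-z/\epsilon} d\mu_T$ over $\admis_\epsilon$. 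The competitor $\Sigma \times [0, L]$ shows $\inf F_\epsilon < \infty$, and integral-current compactness gives a minimizer $T_\epsilon$. Taking first variations by vector fields that are tangent to $\partial\Omega$ on the barrier, one finds: at interior regular points of $\spt T_\epsilon$, $H = -\epsilon^{-1}(e_z)^\perp$, and $\spt T_\epsilon$ meets $\partial\Omega \times \R$ orthogonally.

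Next, convert $T_\epsilon$ into a smooth free-boundary mean curvature flow. Since $F_\epsilon$ is translation-covariant in $z$, the sliding argument of Ilmanen shows that $T_\epsilon$ is foliated by its horizontal slices $M_\epsilon^*(z)$, which together form a translating soliton for free-boundary MCF; up-to-boundary smoothness is Schauder theory for the resulting elliptic Neumann problem, and this is where the hypothesis $r_{3,\alpha}(\partial\Omega) > 0$ enters, via the straightening map $\Phi$ of \eqref{eqn:straighten}. Setting $\mcfM_\epsilon(t) := M_\epsilon^*(\epsilon^{-1} t)$ produces a smooth classical free-boundary MCF on $(0, \infty)$ with $\mu_{\mcfM_\epsilon(t)} \to \mu_\Sigma$ as $t \to 0^+$. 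Comparison of $T_\epsilon$ against $\Sigma \times [0, L]$ yields
\[
F_\epsilon(T_\epsilon \llcorner \{z \in B\}) \leq c(|B| + \epsilon) \|\Sigma\|,
\]
which together with Corollary \ref{cor:mass-bounds} gives uniform local mass bounds on $\mcfM_\epsilon(t)$ for $t$ in any fixed bounded interval.

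Sending $\epsilon \to 0$, Theorem \ref{thm:brakke-compactness} extracts a subsequential limit $\mcfM_\epsilon \to \mcfM$ as free-boundary Brakke flows. Each approximator $\mcfM_\epsilon$ is smooth and proper, hence lies in $\mathcal D(n,N)$; closure of $\mathcal D$ under free-boundary Brakke convergence (Theorem \ref{thm:brakke-closure}) gives B). The spacetime current $T$ is obtained as the flat limit of the $z$-rescalings $(x,z) \mapsto (x, \epsilon z)$ of $T_\epsilon$: the identity $\partial T = \Sigma$ passes from $\partial T_\epsilon$, C(ii) is the limit of the displayed mass bound above, and C(iii) follows from Ilmanen's slicing inequality (the free-boundary portion of $T$, sitting in $\partial\Omega \times [0,\infty)$, contributes nothing to horizontal slices thanks to the hypothesis $\haus^{n+1}(\spt\Sigma)=0$). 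Condition C(iii) at $t=0$ forces $\mu(0) \geq \mu_\Sigma$, and the reverse inequality comes from $\mu_{\mcfM_\epsilon(0)} = \mu_\Sigma$ combined with semi-continuity (Proposition \ref{thm:semi-decreasing} B)), giving A). For smooth, embedded $\Sigma$ with classical free-boundary, Stahl \cite{stahl:regularity} produces a classical smooth free-boundary MCF on $[0, \delta)$ with initial data $\Sigma$; this classical flow lies in $\mathcal D$ and has reflected Gaussian density $\equiv 1$, so Theorem \ref{thm:brakke-reg} applied along it forces the constructed $\mcfM$ to agree with it on some $[0, \delta')$.

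The principal obstacle is the elliptic half of the second paragraph: showing that the minimizer $T_\epsilon$ genuinely has the sliding/foliation structure of a translating soliton and is smooth up to $\partial\Omega \times \R$. Translation-invariance of $F_\epsilon$ and of the admissibility constraint in the $z$-direction makes Ilmanen's sliding argument go through essentially verbatim once one checks that the free-boundary condition is preserved under vertical translation; the new piece is the up-to-boundary Schauder theory for the translating soliton equation with Neumann data, where $r_{3,\alpha}(\partial\Omega) > 0$ is genuinely used. Everything downstream from this step is then closure/compactness of free-boundary Brakke flows and is already in place.
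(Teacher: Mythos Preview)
Your overall architecture (minimize $I_\eps$, translate to get a Brakke flow, pass to the limit, compare with the rescaled current) matches the paper's. But the step you correctly flag as the ``principal obstacle'' is handled quite differently in the paper, and your proposed resolution of it does not go through.

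\textbf{The minimizer $P_\eps$ is never shown to be smooth.} There is no ``sliding/foliation'' argument in Ilmanen that gives smoothness of the translator in arbitrary codimension, and the paper does not attempt one. Instead, the paper treats $P_\eps$ purely as an integral varifold with free-boundary in $\partial\Omega\times\R$, satisfying $S^T(H_{P_\eps}) = -\eps^{-1}\partial_z^\perp$ in the weak sense (Proposition \ref{prop:I_eps-variation}). The translated family $\mu_\eps(t)=(\sigma_{-t/\eps})_\sharp\mu_{P_\eps}$ is then a free-boundary Brakke flow in $\Omega\times(0,\infty)\subset\R^{N+1}$, with no smoothness asserted. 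Compactness (Theorem \ref{thm:brakke-compactness}) is applied in $\R^{N+1}$ to get a limit $\mu$, and only \emph{after} the limit does one prove $z$-invariance (Proposition \ref{prop:z-invariance}) and slice down to $\bar\mu$ in $\R^N$. Your route---slice $T_\eps$ into $M_\eps^*(z)$ first, then take the limit in $\R^N$---requires the foliation structure you have not established.

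\textbf{Part B) is therefore not ``trivial because smooth''.} Since $\mu_\eps$ is not known to be smooth, the paper instead shows $\mu_\eps\in\mathcal D(n+1,N+1)$ by relating $\Theta_{refl}(\mu_\eps,(x,0))$ to the Euclidean density $\Theta_{eucl}(P_\eps,x)$ via Proposition \ref{prop:eucl-gauss-density}, and then invoking Allard \cite{allard:first-variation} (interior) or Gr\"uter--Jost \cite{gruter-jost:allard} (boundary) to get regularity of $P_\eps$ at points where a tangent flow is a multiplicity-one plane. Closure of $\mathcal D$ then passes this to $\mu$, and $z$-invariance pushes it down to $\mcfM=\bar\mu\in\mathcal D(n,N)$.

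\textbf{Smooth initial data.} Your appeal to Stahl plus ``Theorem \ref{thm:brakke-reg} forces agreement'' does not work: Stahl is codimension-one only, and Theorem \ref{thm:brakke-reg} is a regularity statement, not a uniqueness statement---it gives no mechanism for identifying the constructed Brakke flow with the classical one. The paper's argument is intrinsic: a parabolic blow-up at $(x,0)\in\Sigma\times\{0\}$ (Lemma \ref{lem:smooth-initial-data}) together with $\mcfM\in\mathcal D$ shows $\mcfM$ is locally a $C^2$ graph over $T_x\Sigma$ for $0<t\leq\delta$; then a barrier argument using the cut-off functions $\phi_{S,\kappa},\tilde\phi_{S,\kappa}$ of Theorem \ref{thm:cutoff-evolution} upgrades the graph estimate to $|u|\leq Ct$, giving $C^{1,1}$ up to $t=0$, after which Schauder theory finishes.
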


From the uniqueness the classical free-boundary flow of hypersurfaces (see e.g. \cite{stahl:regularity}) we obtain directly
\begin{corollary}
If $\Sigma$ is smooth, and $N = n+1$, then the flow $\mcfM$ of Theorem \ref{thm:existence} coincides with the classical free-boundary mean curvature flow as long as it exists.
\end{corollary}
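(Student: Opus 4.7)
The plan is an open-closed continuation argument driven by Stahl's uniqueness theorem \cite{stahl:regularity} for the classical free-boundary MCF of smooth compact hypersurfaces. Write $(M_t)_{t \in [0, T_{\max})}$ for the maximal classical free-boundary MCF from $\Sigma$, and define
\[
A = \{ t_0 \in [0, T_{\max}) : \mu(t) = \haus^n \llcorner M_t \text{ for every } t \in [0, t_0] \}.
\]
It then suffices to show $A$ is nonempty, open, and closed in $[0, T_{\max})$. Nonemptiness is immediate from the last clause of Theorem \ref{thm:existence}: $\mcfM \cap \{0 \leq t \leq \delta\}$ is smooth and proper, so by Stahl's uniqueness it equals $(M_t)_{t \in [0, \delta]}$, giving $[0, \delta] \subset A$.

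For openness, let $t_0 \in A$. Then $\mu(t_0) = \haus^n \llcorner M_{t_0}$ with $M_{t_0}$ smooth, compact, embedded with classical free-boundary. At every $x \in M_{t_0}$, any reflected tangent flow of $\mcfM$ at $(x, t_0)$ is a static multiplicity-1 tangent plane (with possible free-boundary in a hyperplane), so the defining property of $\mathcal D(n, N)$, which holds for $\mcfM$ by Theorem \ref{thm:existence} B), makes $(x, t_0)$ a regular point. Compactness of $M_{t_0}$ then gives a spacetime neighborhood $\spU$ of $M_{t_0} \times \{t_0\}$ in which $\mcfM$ is a smooth proper free-boundary MCF. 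Upper-semi-continuity of $\Theta_{\mathrm{refl}(S)}$ (Lemma \ref{lem:mono-usc}) combined with the characterization $\spt \mcfM = \{\Theta_{\mathrm{refl}} \geq 1\}$ forces $\spt \mcfM(t) \subset \spU$ for $t \in [t_0, t_0 + \epsilon)$ and $\epsilon > 0$ small. Hence the restriction of $\mcfM$ to $[t_0, t_0 + \epsilon)$ is a smooth classical free-boundary MCF starting from $M_{t_0}$, which by Stahl's uniqueness coincides with $(M_t)$, so $[0, t_0 + \epsilon) \subset A$.

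For closedness, let $t_i \in A$ with $t_i \nearrow t_\infty \in [0, T_{\max})$. Left-continuity of $\mu$ (Proposition \ref{thm:semi-decreasing} B) together with continuity of $t \mapsto \haus^n \llcorner M_t$ give the upper bound $\mu(t_\infty) \leq \haus^n \llcorner M_{t_\infty}$. The matching lower bound is supplied by the enhanced-motion structure of Theorem \ref{thm:existence} C): the integral $(n{+}1)$-current $T$ in $\Omega \times [0, \infty)$ with $\partial T = \Sigma$ has $n$-dimensional time-slices $\langle T, t\rangle$ satisfying $\mu(t) \geq \mu_{\langle T, t\rangle}$. On $A$ the identity $\mu(t) = \haus^n \llcorner M_t$ together with the mass bound from C.ii), the integrality of $T$, and $\partial T = \Sigma = [M_0]$ forces (via the slicing theorem) $\langle T, t\rangle = [M_t]$ for $t \in A$. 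Continuity of the slicing map in $t$ then propagates this to the limit: $\langle T, t_\infty\rangle = [M_{t_\infty}]$, so $\mu(t_\infty) \geq \haus^n \llcorner M_{t_\infty}$. Thus $t_\infty \in A$.

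The main obstacle is the closedness step. The openness step is essentially a packaging of already-developed tools (the $\mathcal D$-property, upper-semi-continuity of $\Theta_{\mathrm{refl}}$, the support characterization, and Stahl's uniqueness). The closedness step, by contrast, requires the current-theoretic identification $\langle T, t\rangle = [M_t]$ and its propagation to $t_\infty$, which is where slicing theory for integral currents and the codimension-1 finite-perimeter structure ($N = n{+}1$) enter in an essential way to convert the a.e.\ mass identity on $A$ into a genuine pointwise identification of slices up to $t_\infty$.
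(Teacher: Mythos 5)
The paper itself offers essentially no proof: it just cites Stahl's uniqueness and declares the corollary immediate. Your open--closed continuation argument is a reasonable reconstruction of what must actually be going on, and the key insight you identify --- that the enhanced-motion current structure of Theorem~\ref{thm:existence}~C) is what prevents sudden vanishing --- is exactly the role the paper assigns to that structure (see the remark right after Theorem~\ref{thm:teaser} in the introduction). However, there is a genuine gap in your openness step.

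At $t_0 \in A$ you conclude that every $(x, t_0)$ with $x \in M_{t_0}$ is a regular point (via $\Theta_{\mathrm{refl}} = 1$ and the $\mathcal D$-property) and then invoke Stahl's uniqueness on $[t_0, t_0 + \epsilon)$. But the definition of a regular point in Section~8 only requires that $\spt\mcfM$ near $X$ be a smooth \emph{proper} flow with $r_{2,\alpha} > 0$, and the definition of smooth flow in Section~\ref{section:spacetime-flows} explicitly allows the flow to have a terminal time $T$ (with $\partial\mcfM \cap \{t = T\}$ nonempty). So the flow $\mcfM$ could simply stop at $t = t_0$: $\mu(t) \equiv 0$ for $t > t_0$. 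In that case every $(x, t_0)$ is still a regular point, $\spt\mcfM(t) \subset \spU$ trivially for $t > t_0$, and there is no smooth flow on $(t_0, t_0 + \epsilon)$ to which Stahl's uniqueness applies --- there is nothing to be unique relative to. The $\mathcal D$-property alone cannot rule out this sudden vanishing; that is precisely why the paper carries the current $T$ along. You use the current only in the closedness step, but it is needed in the openness step as well. (Once you know $\partial(T\llcorner\Omega\times(t_0,\infty)) = [M_{t_0}]$, the flat-continuity of $t \mapsto \partial(T\llcorner\Omega\times(t,\infty))$ plus $\mu(t) \geq \mu_{\partial(T\llcorner\Omega\times(t,\infty))}$ forces $\mu(t) \neq 0$ for $t$ slightly larger than $t_0$, and then the smoothness-plus-uniqueness argument kicks in.)

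Two smaller points. First, the reference to Lemma~\ref{lem:mono-usc} for ``upper-semi-continuity of $\Theta_{\mathrm{refl}}$'' is the wrong tool for confining $\spt\mcfM(t)$ to a neighborhood within a single flow; the finite propagation of support (the corollary after Corollary~\ref{cor:mass-bounds}) or a direct density estimate from the past is the cleaner route. Second, the identification $\partial(T\llcorner\Omega\times(t,\infty)) = [M_t]$ on $A$ and its propagation to $t_\infty$ is asserted but not really argued: you need to explain how integrality plus the mass inequality $\mu_{\partial(T\llcorner\Omega\times(t,\infty))} \leq \haus^n\llcorner M_t$ pins the multiplicity to $\pm 1$, and how the $1/2$-H\"older flat-norm continuity (which does not directly control mass) lets you pass this identification to the limit time. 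These can be filled in, but as written they are where the real work of the corollary lives, and they are the part your proposal leaves vaguest.
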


Define the functional
\[
I_\epsilon(P) = \frac{1}{\epsilon} \int e^{-z/\epsilon} d\mu_P
\]
on the space of integral $(n+1)$-currents in $\R^{N+1}$.  Here $z$ is the $\R$ component of $\R^N \times \R$.  Recall that $I_\eps$ is the area function for the metric
\[
g = e^{\frac{-2z}{(n+1)\eps}} \delta_{eucl} .
\]
In this metric, every plane $\{z = const\}$ is strictly convex, with mean curvature pointing in the $\partial_z$ direction.

\begin{definition}
Let $\admis$ be the space of integral $(n+1)$-currents $P$ in $\Omega \times \R$, for which $\partial P = \Sigma$ and $\spt P \subset \Omega \times [0, \infty)$.
\end{definition}

Clearly $\admis$ is closed under weak convergence.  If $P_i$ is a minimizing sequence for $I_\eps$, then since we have local mass bounds we can take a limit $P_i \to P_\eps$.  Then $P_\eps \in \admis$ also, and by lower-semi-continuity of mass $P_\eps$ minimizes $I_\eps$ in $\admis$.

Moreover, we have that
\begin{equation}\label{eqn:no-0-concentration}
\mu_{P_\eps}(\Omega \times \{0\}) = 0.
\end{equation}
This follows by White's varifold maximum principle \cite{white:varifold-maximum}, the strict convexity of the $\{z = 0\}$ plane, and our assumption that $\haus^{n+1}(\spt \Sigma) = 0$.

$P_\eps$ can be extended to an integer-multiplicity rectifiable current in $\R^{N+1}$ by restriction.  The extension will satisfy
\[
\mu_{P_\eps} = \mu_{P_\eps} \llcorner (\Omega\times \R),
\]
though in general the boundary will not a priori be anymore integral.  (Actually Gruter \cite{gruter:minz-problems} has shown that for minimizers such as $P_\eps$ the extension boundary is integral and locally finite, but we will not need this fact.)

Here's what will happen.  We define the translating solitons $P_\eps(t) = P_\eps - t/\eps$.  As $\eps \to 0$, these ``stretch out'' to become a $z$-invariant Brakke flow, with initial condition $\Sigma \times (0, \infty)$.  This gives the required Brakke flow.

On the other hand, we can normalize $P_\eps$ by scaling $z$ by $1/\eps$.  As $\eps \to 0$, the normalized currents $T_\eps$ essentially approach the spacetime track of the Brakke flow obtained previously.  In fact the limit will sit beneath the spacetime track.  For reasons intimately connected with non-uniquess, there may be a mass discrepency between the spacetime track and the limit.

\begin{prop}\label{prop:I_eps-variation}
Let $P_\eps$ minimize $I_\eps$ in $\admis$.  Then as an integral varifold $P_\eps$ has locally bounded variation in $\R^N \times (0, \infty)$, and satisfies
\begin{equation}\label{eqn:P_eps-stationary}
S^T(H_{P_\eps}) = -\partial_z^\perp/\eps \quad \mu_{P_\eps}-a.e. x \in \R^N \times (0, \infty).
\end{equation}
\end{prop}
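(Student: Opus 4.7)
The plan is to test $I_\eps$-stationarity of $P_\eps$ against a carefully chosen class of tangential variations, then read off the free-boundary condition using Proposition \ref{prop:fb-finite-var}. Given any $Y \in \cT(S\times\R, \R^N\times(0,\infty))\cap C^1_c$, I would set $X := e^{z/\eps}Y$ and let $\phi_s$ be the flow of $X$. Tangency of $X$ to $S\times\R$ ensures $\phi_s$ preserves $\overline{\Omega}\times\R$ for small $|s|$, while the compact support of $X$ in $\R^N\times(0,\infty)$ forces $\phi_s|_{\R^N\times\{0\}}=\mathrm{id}$, so $\partial(\phi_{s\sharp}P_\eps)=\Sigma$ and hence $\phi_{s\sharp}P_\eps \in \admis$. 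Minimality of $P_\eps$ then gives $\tfrac{d}{ds}\big|_{s=0} I_\eps(\phi_{s\sharp}P_\eps)=0$.

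Next I would run the computation. The standard first-variation formula applied to the weighted mass functional $I_\eps$ yields
\begin{equation*}
0 = \frac{1}{\eps}\int e^{-z/\eps}\bigl(\text{div}_{P_\eps}(X) - X_z/\eps\bigr)\,d\mu_{P_\eps}.
\end{equation*}
Expanding $\text{div}_{P_\eps}(X)=e^{z/\eps}\bigl(\text{div}_{P_\eps}(Y)+Y\cdot(\partial_z)^T/\eps\bigr)$ and $X_z=e^{z/\eps}Y_z$, the exponentials cancel and I get
\begin{equation*}
\delta P_\eps(Y) \;=\; \frac{1}{\eps}\int Y\cdot(\partial_z)^\perp\,d\mu_{P_\eps}, \qquad \forall\, Y \in \cT(S\times\R, \R^N\times(0,\infty))\cap C^1_c.
\end{equation*}

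Finally I would interpret this as a free-boundary condition. Since $|\delta P_\eps(Y)| \leq (1/\eps)\int |Y|\,d\mu_{P_\eps}$, Proposition \ref{prop:fb-finite-var} applies to yield locally bounded variation on $\R^N\times(0,\infty)$ together with
$S^T(H_{P_\eps}) = -S^T((\partial_z)^\perp)/\eps$ $\mu_{P_\eps}$-a.e. To remove the outer $S^T$, I would invoke the a.e.\ containment $T_x P_\eps\subset T_x(S\times\R)$ at $\mu_{P_\eps}$-a.e.\ $x\in S\times\R$ (the standard countable-$C^1$-cover argument used in the proof of Proposition \ref{prop:fb-finite-var}); this forces $(\partial_z)^T\in T_x(S\times\R)$, and since $\partial_z$ itself lies in $T_x(S\times\R)$ we get $(\partial_z)^\perp = \partial_z-(\partial_z)^T \in T_x(S\times\R)$, so $(\partial_z)^\perp\cdot\nu_S=0$ and $S^T((\partial_z)^\perp)=(\partial_z)^\perp$ (trivially so off $S\times\R$). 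The only real subtlety is the reweighting $X=e^{z/\eps}Y$, which is the trick that makes the weight $e^{-z/\eps}$ cancel cleanly; everything else is a routine first-variation computation together with the a.e.\ tangent-plane containment.
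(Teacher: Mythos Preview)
Your proposal is correct and is essentially the same approach as the paper's proof, which is a two-line sketch saying the relation follows from $\delta I_\eps(P_\eps)(X)=0$ for all $X\in\cT(\partial\Omega,\R^N\times(0,\infty))\cap C^1$, with the locally bounded variation coming from Proposition \ref{prop:fb-finite-var}. Your reweighting $X=e^{z/\eps}Y$ is exactly the computation one does to unpack that sentence, and your care in removing the outer $S^T$ from $(\partial_z)^\perp$ via the a.e.\ tangent-plane containment is a nice point the paper leaves implicit.
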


\begin{proof}
The relation \eqref{eqn:P_eps-stationary} follows directly from the fact that
\[
\delta I_\eps(P_\eps)(X) = 0 \quad \forall X \in \cT(\partial\Omega, \R^N \times (0, \infty)) \cap C^1.
\]
The locally finite variation is a consequence of Proposition \ref{prop:fb-finite-var}.
\end{proof}

Write $\mathrm{\sigma_s}(x, z) := (x, z + s)$, and define the varifolds
\[
P_\eps(t) := (\sigma_{-t/\eps})_\sharp P_\eps.
\]
Then Proposition \ref{prop:I_eps-variation} shows the associated Radon measures
\[
t \in [0, \infty) \mapsto \mu_\eps(t) := \mu_{P_\eps(t)}
\]
form a Brakke flow supported in $\Omega \times (0, \infty)$ with free-boundary in $\partial\Omega \times (0, \infty)$.

Given an open interval $A \subset \R$, let us write $P_\eps(A) = P_\eps \llcorner (\R^N \times A)$.

\begin{prop}[Ilmanen section 4]\label{prop:decreasing-stuff}
Let $P_\eps$ minimize $I_\eps$ in $\mathcal C$.  Then
\begin{enumerate}
\item[A)] we have $I_\eps(P_\eps) \leq ||\Sigma||$;

\item[B)] for any interval $A \subset \R$, 
\[
\frac{1}{|A|} \int |\partial_z^T|^2 d\mu_{P_\eps(A)} \leq I_\eps(P_\eps) ;
\]

\item[C)] we have
\[
\frac{1}{\eps} \int |\partial_z^\perp|^2 d\mu_{P_\eps} \leq I_\eps(P_\eps) ;
\]

\item[D)] in particular, for any interval $A \subset \R$, we have $||P_\eps(A)|| \leq (|A| + \eps) ||\Sigma||$.
\end{enumerate}
\end{prop}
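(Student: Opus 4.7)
I would prove the four parts in the order A, C, B, D, with D following immediately once A, B, C are in hand. The arguments for A and C are direct, and B emerges as a corollary of C via a suitable test-field choice.

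\emph{Part A.} Take the explicit competitor $Q_0 := \Sigma \times [0,\infty)$, which lies in $\admis$: its boundary is $\Sigma$ (up to an absorbable sign) and its support lies in $\overline\Omega \times [0,\infty)$. Fubini gives
\[
I_\eps(Q_0) = \frac{1}{\eps}\int_\Sigma \int_0^\infty e^{-z/\eps}\,dz\,d\mu_\Sigma = \|\Sigma\|,
\]
and minimality of $P_\eps$ in $\admis$ completes A.

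\emph{Part C.} The key tool is the Euler--Lagrange identity $S^T(H_{P_\eps}) = -\partial_z^\perp/\eps$ from Proposition \ref{prop:I_eps-variation}. Since $\partial_z$ is tangent to the barrier $\partial\Omega \times \R$, the test field $X = \chi(z)\partial_z$ lies in $\cT(\partial\Omega \times \R)\cap C^1$ for any smooth $\chi$ compactly supported in $(0,\infty)$. Direct computation gives $\operatorname{div}_{P_\eps}(X) = \chi'(z)|\partial_z^T|^2$, and the first-variation identity reads
\begin{equation}\label{eqn:plan-key}
\int \chi'(z)|\partial_z^T|^2\,d\mu_{P_\eps} \;=\; \frac{1}{\eps}\int \chi(z)|\partial_z^\perp|^2\,d\mu_{P_\eps}.
\end{equation}
I would approximate the non-compactly-supported choice $\chi_*(z) := (1-e^{-z/\eps})\mathbf{1}_{z>0}$ by a family of smooth compactly supported $\chi_R$, and pass to the limit using \eqref{eqn:no-0-concentration} to kill the boundary spike at $z = 0$. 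Rearranging via $|\partial_z^T|^2 + |\partial_z^\perp|^2 = 1$ then yields the sharper identity
\[
I_\eps(P_\eps) \;=\; \frac{1}{\eps}\int |\partial_z^\perp|^2\,d\mu_{P_\eps},
\]
which gives C.

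\emph{Parts B and D.} For B, apply \eqref{eqn:plan-key} to a ramp cutoff $\chi_A$ that is smooth, vanishing on $\{z \leq a\}$, linear on $[a,b]$, equal to $|A|$ on $\{z \geq b\}$, then tapered at infinity. Since $0 \leq \chi_A \leq |A|$ and $\chi_A' \approx \mathbf{1}_A$, the right-hand side of \eqref{eqn:plan-key} is bounded by $|A| \cdot \frac{1}{\eps}\int |\partial_z^\perp|^2\,d\mu_{P_\eps} \leq |A|\cdot I_\eps(P_\eps)$ using C, yielding B. Part D follows by decomposing $1 = |\partial_z^T|^2 + |\partial_z^\perp|^2$:
\[
\|P_\eps(A)\| \;=\; \int_A \bigl(|\partial_z^T|^2 + |\partial_z^\perp|^2\bigr)\,d\mu_{P_\eps} \;\leq\; |A|\, I_\eps(P_\eps) + \eps\, I_\eps(P_\eps) \;\leq\; (|A| + \eps)\|\Sigma\|
\]
using B, C, and A in succession.

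\emph{Main obstacle.} The delicate step is passing to the limit in Part C. The boundary contribution at $z = 0$ in \eqref{eqn:plan-key} must be shown to vanish: after writing out the ramp of $\chi_R$ over $[1/(2R), 1/R]$ one sees $\chi_R' \approx 2/\eps$ there, so the unwanted term is controlled by $\frac{2}{\eps}\mu_{P_\eps}(\R^N \times [1/(2R), 1/R])$, which tends to $0$ by \eqref{eqn:no-0-concentration}. The tapering at infinity is justified by a maximum principle showing $\spt P_\eps$ is bounded in $z$ --- the horizontal planes $\{z = \text{const}\}$ are strictly convex in the weighted metric $g = e^{-2z/((n+1)\eps)}\delta_{\text{eucl}}$ with mean curvature pointing in the $+\partial_z$ direction, so $P_\eps$ cannot extend arbitrarily high --- hence for $R$ large the taper makes no contribution.
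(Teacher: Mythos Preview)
Your overall architecture is sound and very close to the paper's: both rest on the single first-variation identity
\[
\int \chi'(z)\,|\partial_z^T|^2 \, d\mu_{P_\eps} \;=\; \frac{1}{\eps}\int \chi(z)\,|\partial_z^\perp|^2 \, d\mu_{P_\eps}
\]
for tangential fields $X=\chi(z)\partial_z$.  The paper extracts from it a monotonicity in $z$ of $\delta^{-1}\int_{(z,z+\delta)}|\partial_z^T|^2$ together with a bound at $z=0$, then reads off B and C; you instead go straight to C with the nice choice $\chi_*=1-e^{-z/\eps}$ and derive B from C.  Either route works.

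The genuine gap is your justification for the taper at $z=+\infty$.  The maximum principle you invoke points the \emph{wrong} way: since the planes $\{z=c\}$ have $g$-mean curvature in the $+\partial_z$ direction, White's varifold maximum principle forbids $P_\eps$ from having an interior local \emph{minimum} of $z$ --- this is precisely how the paper obtains $\mu_{P_\eps}(\Omega\times\{0\})=0$ --- but it says nothing about large $z$.  In fact $P_\eps$ is typically \emph{unbounded} in $z$: the competitor $\Sigma\times[0,\infty)$ already is, and so are translating solitons such as grim reapers and bowls.  So your claimed bounded $z$-support is false in general.

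Your argument is easily repaired once you notice the taper contributes with the \emph{favorable sign}.  Writing $\chi_R=\chi_*\psi_R$ with $\psi_R$ a cutoff on $[R,2R]$, one has $\chi_R'=\chi_*'\psi_R+\chi_*\psi_R'$; the second piece is $\leq 0$, and the first is dominated by $\eps^{-1}e^{-z/\eps}$, which is $\mu_{P_\eps}$-integrable since $I_\eps(P_\eps)<\infty$.  Monotone convergence on the right-hand side then yields $\eps^{-1}\int|\partial_z^\perp|^2\,d\mu_{P_\eps}\leq I_\eps(P_\eps)$ --- the inequality C), though your claimed \emph{equality} does not follow from this limiting argument alone (and is not needed).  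The same sign trick handles B.  Separately, for $X=\chi(z)\partial_z$ to lie in $C^1_c$ you also need compact support in the $x$-variables at each finite $z$-range; the paper handles this via Corollary~\ref{cor:mass-bounds}, and you should invoke it too.
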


\begin{proof}
The proof is identical to that in Ilmanen, since all the relevant vector fields lie in $\cT(\partial\Omega \times \R, \R^N \times (0, \infty))$.  We provide an overview.  Part A) follows by plugging in $\Sigma \times [0, \infty)$ into $I_\eps$.

To prove B)-D), first one proves: for any $\delta > 0$, and $w - \delta \geq z \geq 0$, we have
\begin{equation}\label{eqn:delta-decr}
\delta^{-1} \int |\partial_z^T|^2 d\mu_{P_\eps(z, z + \delta)} - \delta^{-1} \int |\partial_z^T|^2 d\mu_{P_\eps(w, w + \delta)} \geq \eps^{-1} \int |\partial_z^\perp|^2 d\mu_{P_\eps(z + \eps, w)} .
\end{equation}
To achieve this plug the vector field $X = \eta(z) e^{z/\eps} \partial_z$ into $\delta I_\eps(P_\eps)$, for $\eta$ an appropriate piece-wise linear function supported on $(0, \infty)$.

Similarly, one can prove that: for any $\delta > 0$ we have
\begin{equation}\label{eqn:delta-at-0}
\delta^{-1} \int |\partial_z^T|^2 d\mu_{P_\eps(0, \delta)} \leq I_\eps(P_\eps).
\end{equation}
One uses a family of vector fields $X = \phi(z) \partial_z$, where $\phi$ looks like:
\[
\phi(z) = z/\delta \quad \text{ on } [0, \delta], \quad \phi(z) = 1 - (z-\delta)/L \quad \text{ on } [\delta, L].
\]

These vector fields are allowed because we know by Corollary \ref{cor:mass-bounds} that $P_\eps(0, a)$ is compactly supported for any $a < \infty$. 

By \eqref{eqn:no-0-concentration} we need only prove B), C) for intervals $A \subset (0, \infty)$.  B) follows directly from \eqref{eqn:delta-decr}, \eqref{eqn:delta-at-0}, and choosing an appropriate partition of $A$.  C) follows from \eqref{eqn:delta-decr}, \eqref{eqn:delta-at-0}, and the monotone convergence Theorem.  D) is then immediate.
\end{proof}

\begin{theorem}
There is a sequence $\mu_i \equiv \mu_{\eps_i}$ so that $\mu_i \to \mu$ as free-boundary Brakke flows.  Here $t \in [0, \infty) \mapsto \mu(t)$ is a free-boundary Brakke flow in $\Omega \times (0, \infty)$, with
\[
\mu(t)(\R^N \times A) \leq |A| ||\Sigma|| \quad \forall t \in [0, \infty), \text{ intervals }A \subset \R.
\]
\end{theorem}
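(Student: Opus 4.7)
The plan is to apply the Brakke compactness theorem (Theorem~\ref{thm:brakke-compactness}) to any sequence $\eps_i \to 0$, with ambient $\R^{N+1}$ and fixed barrier $S := \partial\Omega \times \R$ (whose global reflection regularity scale is bounded below since $r_{3,\alpha}(\partial\Omega) > 0$ by hypothesis and $S$ is $z$-translation invariant), and then pass the interval-mass bound of Proposition~\ref{prop:decreasing-stuff}(D) to the limit. Two things need verification: that each $(\mu_\eps(t))_{t \geq 0}$ is itself an $(n{+}1)$-dimensional free-boundary Brakke flow in $\R^{N+1}$, and that a uniform local mass bound holds for the compactness hypothesis.

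For the first, condition A) of Definition~\ref{def:fb-brakke} is immediate from Proposition~\ref{prop:I_eps-variation} together with the $z$-translation invariance of $S$, giving $S^T(H_{\mu_\eps(t)}) = -\eps^{-1}\partial_z^\perp \in L^2_{\mathrm{loc}}$ for each $t$. For condition B), the pushforward identity $\mu_\eps(t) = (\sigma_{-t/\eps})_\sharp \mu_{P_\eps}$ gives, for $\phi \in \cBT(S, \R^{N+1}, [a,b]) \cap C^2$,
\[
\frac{d}{dt} \int \phi \, d\mu_\eps(t) = \int \partial_t \phi \, d\mu_\eps(t) - \eps^{-1} \int D\phi \cdot e_z \, d\mu_\eps(t).
\]
Decomposing $e_z = \partial_z^T + \partial_z^\perp$, the $\partial_z^\perp$ piece is exactly $S^T(H) \cdot D\phi$. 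For the tangential piece, test the free-boundary first variation of $\mu_\eps(t)$ against $X := \phi(\cdot, t) e_z$, which is admissible because $e_z \in \cT(S)$ at every point; since $\vecdiv_V(e_z) = 0$, one obtains
\[
\eps^{-1} \int \partial_z^T \cdot D\phi \, d\mu_\eps(t) = \int |S^T(H)|^2 \phi \, d\mu_\eps(t),
\]
yielding Brakke with equality. Integrating in $t$ and using density of $\cBT \cap C^2$ in $\cBT$ establishes B).

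For the second, Proposition~\ref{prop:decreasing-stuff}(D) gives, for any compact $K \subset \bar\Omega \times [a,b]$,
\[
\mu_\eps(t)(K) \leq \mu_{P_\eps}(\R^N \times [a + t/\eps, b + t/\eps]) \leq (|b-a| + \eps)\|\Sigma\|,
\]
uniform in both $\eps$ and $t$. Theorem~\ref{thm:brakke-compactness} then produces $\mu_{\eps_i}(t) \to \mu(t)$ along a subsequence for every $t \geq 0$, where $\mu$ is a free-boundary Brakke flow with barrier $S$. For an open interval $A$, lower semicontinuity of mass on open sets yields $\mu(t)(\R^N \times A) \leq \liminf_i \mu_{\eps_i}(t)(\R^N \times A) \leq |A| \|\Sigma\|$; inner approximation extends this to arbitrary intervals.

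The main obstacle is matching the ``extra'' translating-soliton term $-\eps^{-1} \partial_z^T \cdot D\phi$ appearing in $\tfrac{d}{dt}\mu_\eps(t)(\phi)$ against the $|S^T(H)|^2 \phi$ term predicted by the Brakke evolution. The reconciliation uses in an essential way the $z$-translation invariance of $S$: only then is $\phi e_z$ admissible as a tangential test field in the first variation of $P_\eps$, and that first-variation identity---furnished by the minimality of $P_\eps$ for $I_\eps$ (Proposition~\ref{prop:I_eps-variation})---is exactly what closes the computation.
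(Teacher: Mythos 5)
Your proposal is correct and follows essentially the same route as the paper: uniform local mass bounds from Proposition~\ref{prop:decreasing-stuff}(D), Theorem~\ref{thm:brakke-compactness} for compactness, and lower semicontinuity of mass on open sets for the interval bound. The detailed re-derivation that each $(\mu_\eps(t))_t$ satisfies the free-boundary Brakke inequality (via testing $\delta I_\eps(P_\eps)$ against $\phi e_z$) is material the paper treats as already established in the paragraph following Proposition~\ref{prop:I_eps-variation}, but your computation is valid and, if anything, supplies the uniform-in-$t$ mass bound more directly than the paper's appeal to the $t=0$ slice alone.
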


From this point on we shall fix $\mu$ and $\mu_i$ as in this Theorem.

\begin{proof}
Proposition \ref{prop:decreasing-stuff} shows the masses $\mu_\eps(0)$ are uniformly locally bounded in $\eps$.  The existence of $\mu$ follows from the compactness Theorem \ref{thm:brakke-compactness}.  The bounds follow from lower-semi-continuity of mass, because
\[
\mu_\eps(t)(\R^N \times A) = ||P_\eps||(A + t/\eps) \leq (|A| + \eps) ||\Sigma|| . \qedhere
\]
\end{proof}

\begin{prop}[Ilmanen 8.5/8.8]\label{prop:z-invariance}
The flow $\mu$ satisfies:

\begin{enumerate}
\item[A)] For a.e. $t \geq 0$, we have that
\[
\mu(t) = \mu_{V(t)} = \mu_{W(t) \times \R},
\]
where $V(t)$, $W(t)$ are integral $(n+1)$- and $n$-varifolds respectivly.

\item[B)] If $\theta \in C_c((0, \infty), \R)$ satisfies $\int \theta = 1$, then
\[
W(t)(\psi(x, S)) = V(t)(\psi(x, S \oplus \langle \partial_z \rangle) \theta(z)), \quad \mu_{W(t)}(\phi(x)) = \mu_{V(t)} (\phi(x)\theta(z)) .
\]

\item[C)] The collection $t \in [0, \infty) \mapsto \bar\mu(t) \equiv \mu_{W(t)}$ defines a free-boundary Brakke flow in $\Omega$.

\item[D)] We have
\[
\bar\mu(t)(\R^N) \leq ||\Sigma|| \quad \forall t \geq 0.
\]
\end{enumerate}
\end{prop}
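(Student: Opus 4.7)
The strategy exploits the identity $\mu_\eps(t + s\eps) = (\sigma_{-s})_\sharp \mu_\eps(t)$, which follows directly from $P_\eps(t) = (\sigma_{-t/\eps})_\sharp P_\eps$. As $\eps_i \to 0$ the shift $s\eps_i$ vanishes, so a finite vertical translation $\sigma_{-s}$ is imposed on the limit essentially for free. Concretely, for any fixed $s \in \R$ and any $t$ at which $\mu$ is continuous (a.e.\ $t$, by Proposition \ref{thm:semi-decreasing}), we have $\mu_i(t + s\eps_i) \to \mu(t)$, using the semi-decreasing structure $t \mapsto \mu_i(t)(\phi) - C_\phi t$ to handle the varying time argument, and simultaneously $\mu_i(t + s\eps_i) = (\sigma_{-s})_\sharp \mu_i(t) \to (\sigma_{-s})_\sharp \mu(t)$. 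Hence $(\sigma_{-s})_\sharp \mu(t) = \mu(t)$ for every $s$, so $\mu(t)$ is $z$-translation invariant for a.e.\ $t$.

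For part A), fix such a $t$ and write $\mu(t) = \mu_{V(t)}$ for an integral $(n+1)$-varifold (a.e.\ $t$ suffices). The $z$-invariance of $\mu_{V(t)}$ forces $\partial_z \in T_x V(t)$ at $\mu_{V(t)}$-a.e.\ point; combined with integrality and the rectifiable structure theorem applied slice-by-slice, this yields $V(t) = W(t) \times \R$ for an integral $n$-varifold $W(t)$ in $\R^N$. The inclusion $\spt\mu(t) \subset \Omega \times [0,\infty)$ together with $z$-invariance then forces $\spt W(t) \subset \overline{\Omega}$. Part B) is Fubini applied to this product structure, using $\int\theta = 1$.

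For part C), given $\phi \in \cBT(S, \Omega, [a,b])$ with $S = \partial\Omega$, lift to $\tilde\phi(x, z, t) := \phi(x, t)\theta(z)$ for a fixed nonnegative $\theta \in C^\infty_c((0,\infty))$ with $\int\theta = 1$. Since $T(S \times \R) = TS \oplus \langle\partial_z\rangle$, one checks $D\tilde\phi \in \cT(S \times \R, \Omega \times \R)$, so $\tilde\phi$ is an admissible test function for $\mu$. A direct computation using $V(t) = W(t) \times \R$ gives $H_{V(t)} = (H_{W(t)}, 0)$, and therefore
\[
(S\times\R)^T(H_{V(t)}) \cdot D\tilde\phi = \theta(z) \cdot S^T(H_{W(t)}) \cdot D_x\phi,
\]
the $\theta'(z)$ term vanishing because $H_W$ has no $z$-component. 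Integrating via Fubini and using $\int\theta = 1$ collapses the evolution inequality for $\mu$ into the desired one for $\bar\mu$. Part D) is then immediate: applying the mass bound $\mu(t)(\R^N \times A) \leq |A| \|\Sigma\|$ with any interval $|A| > 0$ gives $|A|\,\bar\mu(t)(\R^N) \leq |A|\|\Sigma\|$. The main obstacle I anticipate is the rigorous upgrade from $z$-invariance of the mass measure to a product structure at the integral-varifold level; this should follow from standard slicing combined with the rectifiable structure theorem, but requires care to ensure $W(t)$ is genuinely integral and not merely rectifiable with measurable density.
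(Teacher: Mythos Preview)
Your proposal is correct and follows essentially the same approach as the paper, which simply defers to Ilmanen's Lemmas 8.5 and 8.8 while noting that the semi-decreasing property (Proposition \ref{thm:semi-decreasing}) supplies the $z$-invariance; you have reproduced that argument in outline, including the translation identity $\mu_\eps(t + s\eps) = (\sigma_{-s})_\sharp\mu_\eps(t)$ and the lifting of test functions via $\tilde\phi = \phi(x,t)\theta(z)$. Your flagged obstacle---upgrading $z$-invariance of the mass measure to an integral product $W(t)\times\R$---is exactly the content of Ilmanen's Lemma 8.5, so your instinct about where the real work lies is correct.
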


\begin{proof}
Follows directly as in Ilmanen \cite{ilmanen:elliptic-reg} 8.5 and 8.8, using Theorem \ref{thm:semi-decreasing} to prove $z$-invariance.  Conclusions A), B) implies the free-boundary condition on $\bar\mu(t)$.
\end{proof}

We set $\mcfM$ to be the flow $\bar\mu$.  We've already established $\mcfM$ is an integral, free-boundary Brakke flow in $\Omega$.  We wish to show $\mcfM(0) = \mu_\Sigma$.

Let $\kappa_\eps(x, z) := (x, \eps z)$, and define
\[
T_\eps := (\kappa_\eps)_\sharp P_\eps.
\]
Then by precisely the same computation of Ilmanen \cite{ilmanen:elliptic-reg} Section 8.10, for any open interval $A \subset \R$, 
\[
||T_\eps(A)|| \leq (|A| + \eps^2)^{1/2}(1 + |A|^{1/2}) I_\eps(P_\eps) .
\]
Here $T_\eps(A) := T_\eps \llcorner (\R^N \times A)$.

We can pass to a subsequence (WLOG $\eps_i$ also), so that $T_i \equiv T_{\eps_i} \to T$ as currents in $\Omega \times \R$.  Here $T$ is an integral $(n+1)$-current supported in $\Omega \times [0, \infty)$, with $\partial T = \Sigma$, and
\[
\mu_T(\R^N \times A) \leq |A|^{1/2}(1 + |A|^{1/2})||\Sigma|| \quad \forall \text{ intervals } A \subset \R.
\]
The family of currents $t \mapsto \partial T(t, \infty)$ is $1/2$-Holder continuous in the flat-norm.

In particular, one obtains
\begin{lemma}
If $\delta_i \to 0$, then $T_{\eps_i}(t + \delta_i, \infty) \to T(t, \infty)$.
\end{lemma}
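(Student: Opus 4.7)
The plan is to combine the assumed current convergence $T_{\eps_i} \to T$ on smooth compactly supported test forms with the $1/2$-H\"older mass bound on $z$-slabs; the latter prevents mass concentration on the hyperplane $\{z = t\}$ uniformly in $\eps_i$, so we can freely trim the non-smooth restriction by a smooth cutoff in $z$.

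First I would fix an arbitrary smooth compactly supported test $(n+1)$-form $\omega$ on $\R^{N+1}$, and for each $\lambda > 0$ choose a smooth cutoff $\eta_\lambda : \R \to [0,1]$ with $\eta_\lambda \equiv 0$ on $(-\infty, t]$ and $\eta_\lambda \equiv 1$ on $[t+\lambda, \infty)$. Since $\eta_\lambda(z)\omega$ is itself a smooth compactly supported test form, the assumed current convergence gives
\begin{align*}
T_{\eps_i}(\eta_\lambda \omega) \to T(\eta_\lambda \omega) \quad \text{as } i \to \infty.
\end{align*}

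Next I would estimate the error caused by replacing $\chi_{(t+\delta_i,\infty)}(z)$ by $\eta_\lambda(z)$. For $i$ large enough that $\delta_i \leq \lambda$, the functions $\chi_{(t+\delta_i,\infty)}$ and $\eta_\lambda$ differ only on $[t, t+\lambda]$ and both take values in $[0,1]$; since the integer-multiplicity rectifiable current $T_{\eps_i}$ pairs naturally with bounded Borel-coefficient forms, this gives
\begin{align*}
|T_{\eps_i}(t+\delta_i,\infty)(\omega) - T_{\eps_i}(\eta_\lambda\omega)| \leq \|\omega\|_\infty \cdot \mu_{T_{\eps_i}}(\R^N \times [t, t+\lambda]).
\end{align*}
The mass bound displayed just before the statement (applied to a slightly enlarged open interval and taking limits, to avoid the open/closed distinction) controls the right-hand side by $(\lambda + \eps_i^2)^{1/2}(1 + \lambda^{1/2})\|\Sigma\|$, and the analogous bound on $\mu_T$ controls the corresponding error between $T(t,\infty)(\omega)$ and $T(\eta_\lambda \omega)$ by $\lambda^{1/2}(1+\lambda^{1/2})\|\Sigma\|$.

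Combining via the triangle inequality and taking the limsup in $i$ yields
\begin{align*}
\limsup_i |T_{\eps_i}(t+\delta_i,\infty)(\omega) - T(t,\infty)(\omega)| \leq 2\lambda^{1/2}(1+\lambda^{1/2})\|\omega\|_\infty \|\Sigma\|,
\end{align*}
after which sending $\lambda \to 0$ finishes the proof. There is no serious obstacle: the only point requiring a small check is the extension of the pairing of an integer-multiplicity rectifiable current with a bounded Borel-coefficient form (which is immediate from the rectifiable representation), and the whole argument is really just a slab-trimming scheme built on the $1/2$-H\"older mass estimate.
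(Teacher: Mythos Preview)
Your argument is correct and is precisely the natural way to fill in what the paper leaves implicit: the lemma is stated there without proof, as an immediate consequence (``In particular, one obtains\ldots'') of the uniform slab mass bound $\|T_\eps(A)\| \leq (|A|+\eps^2)^{1/2}(1+|A|^{1/2})\|\Sigma\|$ and the convergence $T_{\eps_i} \to T$. Your smooth-cutoff-in-$z$ trimming scheme is exactly the standard unpacking of that sentence.

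One small remark: the lemma allows $\delta_i$ of either sign, so in your trimming step you should use the slab $[t-\lambda, t+\lambda]$ rather than $[t, t+\lambda]$ (for $i$ large enough that $|\delta_i| \leq \lambda$) to cover the case $\delta_i < 0$; this only doubles the constant and changes nothing else.
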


The following Proposition and Corollary finishes the proof of Theorem \ref{thm:existence} parts A), C).

\begin{prop}
For every $t \geq 0$, we have $\bar\mu(t)  \geq \mu_{\partial T(t, \infty)}$.  Note we are taking the boundary of $T(t, \infty)$ as a current in $\Omega \times \R$.
\end{prop}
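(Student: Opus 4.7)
Fix $t > 0$; the case $t = 0$ reduces to the initial-data condition $\bar\mu(0) = \mu_\Sigma$ (part A of Theorem~\ref{thm:existence}). Because $\partial T = \Sigma$ is supported in $\Omega \times \{0\}$, for $t > 0$ the boundary $\partial T(t, \infty)$, taken in $\Omega \times \R$, equals $-\langle T, z, t\rangle$, and analogously for each $T_i$. The claim therefore reduces to showing $\mu_{\langle T, z, t\rangle}(\phi) \leq \bar\mu(t)(\phi)$ for every $\phi \in C_c(\Omega, \R_+)$.

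\textbf{Chain of inequalities at finite $\eps_i$.} Slicing functoriality under $T_i = (\kappa_i)_\sharp P_i$, where $\kappa_i(x, z) = (x, \eps_i z)$, gives $\langle T_i, z, s\rangle = (\kappa_i)_\sharp \langle P_i, z, s/\eps_i\rangle$; tested against $\phi(x)$, both sides have equal mass since $\kappa_i$ is the identity in the $x$-direction on each horizontal slice. Substituting $s = \eps_i s'$, applying the coarea bound (using $|\partial_z^T| \leq 1$) for $P_i$, and invoking the translation $\mu_i(t) = (\sigma_{-t/\eps_i})_\sharp \mu_{P_i}$ yields the key chain
\[
\int_t^{t+\delta} \mu_{\langle T_i, z, s\rangle}(\phi)\, ds \leq \eps_i \int_{t/\eps_i}^{(t+\delta)/\eps_i} \mu_{\langle P_i, z, s'\rangle}(\phi)\, ds' \leq \eps_i \mu_i(t)\bigl(\phi \mathbf{1}_{z \in (0, \delta/\eps_i)}\bigr).
\]

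\textbf{Passage to the limit and Lebesgue differentiation.} As $\eps_i \to 0$, the left-hand side converges to $\int_t^{t+\delta}\mu_{\langle T, z, s\rangle}(\phi)\, ds$, using integer-current convergence $T_i \to T$ and the resulting $\haus^n$-a.e. tangent-plane convergence (so that the $|\partial_z^T|$-weighted integrals representing the slice masses converge). For the right-hand side I partition $(0, \delta/\eps_i)$ into unit-length sub-intervals and invoke the identity $\mu_i(t)(\phi \mathbf{1}_{z \in (k, k+1)}) = \mu_i(t + k\eps_i)(\phi \mathbf{1}_{z \in (0, 1)})$, which combined with weak convergence $\mu_i(s) \to \mu(s) = \bar\mu(s) \otimes \leb^1$ (Proposition~\ref{prop:z-invariance}) and the uniform slab bound of Proposition~\ref{prop:decreasing-stuff}(D) produces a Riemann-sum convergence
\[
\eps_i \mu_i(t)\bigl(\phi \mathbf{1}_{z \in (0, \delta/\eps_i)}\bigr) \to \int_t^{t+\delta} \bar\mu(s)(\phi)\, ds.
\]
Combining the two limits, $\int_t^{t+\delta}\mu_{\langle T, z, s\rangle}(\phi)\, ds \leq \int_t^{t+\delta}\bar\mu(s)(\phi)\, ds$; Lebesgue differentiation at a.e.\ $t$ gives the pointwise bound $\mu_{\langle T, z, t\rangle}(\phi) \leq \bar\mu(t)(\phi)$, which extends to every $t \geq 0$ via the $1/2$-Holder continuity of $s \mapsto T(s, \infty)$ in the flat norm (plus the semi-decreasing property of $\bar\mu$ from Proposition~\ref{thm:semi-decreasing}).

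\textbf{Main obstacle.} The delicate step is the Riemann-sum convergence $\eps_i \mu_i(t)(\phi \mathbf{1}_{(0, \delta/\eps_i)}) \to \int_t^{t+\delta} \bar\mu(s)(\phi)\, ds$ when the slab width $\delta/\eps_i$ diverges, since weak convergence $\mu_i(s) \to \mu(s)$ alone controls only slabs of bounded width. This is overcome by using the translation-quasi-invariance of $\mu_i$ as a $z$-shift of $\mu_{P_i}$ to reduce unit-length sub-intervals to translates of a fixed slab, then summing. This is exactly Ilmanen's argument from \cite{ilmanen:elliptic-reg}; the only free-boundary adaptation is verifying that the test vector fields used in the coarea and slicing steps for $P_i$ lie in $\cT(\partial\Omega \times \R, \R^N \times (0, \infty))$, which is automatic because they are proportional to $\partial_z$ and tangent to $\partial\Omega \times \R$.
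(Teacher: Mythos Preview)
Your outline follows the same route as the paper, which simply invokes Ilmanen's computation in \cite[8.11]{ilmanen:elliptic-reg} for test functions $\phi \in C^2_c(\Omega, \R_+)$ and then observes that $\mu_{\partial T(t,\infty)}(\partial\Omega \times \R) = 0$ since $T$ is taken as a current in $\Omega \times \R$. Your more detailed reconstruction is largely correct, but one justification is wrong.

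The problematic step is the left-hand side limit. You claim that integer-current convergence $T_i \to T$ gives ``$\haus^n$-a.e.\ tangent-plane convergence'' and hence convergence of the $|\partial_z^T|$-weighted integrals. This is false: flat convergence of integral currents does not imply varifold (tangent-plane) convergence, and there is no reason the slice masses should converge. What is both true and sufficient is the one-sided bound
\[
\int_t^{t+\delta}\mu_{\langle T, z, s\rangle}(\phi)\, ds \;\leq\; \liminf_i \int_t^{t+\delta}\mu_{\langle T_i, z, s\rangle}(\phi)\, ds,
\]
obtained from Fatou together with lower semi-continuity of mass under flat convergence of slices ($\langle T_i, z, s\rangle \to \langle T, z, s\rangle$ for a.e.\ $s$). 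Since you only need an inequality at the end, this lower semi-continuity, combined with your $\limsup$ control on the right-hand side, gives the desired averaged bound. You should replace the tangent-plane argument with this.

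Two smaller points: for the Riemann-sum step it is cleaner to test against $\phi(x)\theta(z)$ with $\theta \in C^2_c((0,1))$ rather than the discontinuous $\phi \mathbf{1}_{(0,1)}$, so that the semi-decreasing property (Proposition~\ref{thm:semi-decreasing}) applies directly and gives the needed equicontinuity for the sum to converge; and you should record, as the paper does, that the inequality extends from $\Omega$ to $\overline\Omega$ because $\mu_{\partial T(t,\infty)}$ puts no mass on $\partial\Omega \times \R$.
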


\begin{proof}
Using test functions $\phi \in C^2_c(\Omega, \R_+)$, the computation in Ilmanen 8.11 shows $\bar\mu(t) \llcorner \Omega \geq \mu_{\partial T(t, \infty)}$.  But since $T$ is taken as a current in $\Omega \times \R$ we have $\mu_{\partial T(t, \infty)}(\partial\Omega \times \R) = 0$.
\end{proof}

\begin{corollary}
We have $\lim_{t \to 0^+} \bar\mu(t) = \bar\mu(0) = \mu_{\Sigma}$.
\end{corollary}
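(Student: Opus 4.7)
The plan is to identify the right-limit $\nu := \lim_{t \to 0^+} \bar\mu(t)$ directly and prove $\nu = \mu_\Sigma$; this is enough because the Brakke-flow axioms only pin down $\bar\mu(t)$ for a.e.\ $t$, so we are free to set $\bar\mu(0) := \mu_\Sigma$. The argument is a two-sided sandwich: a lower bound $\nu \geq \mu_\Sigma$ coming from the previous proposition, and an upper total-mass bound $\nu(\R^N) \leq \|\Sigma\|$ matching $\mu_\Sigma(\R^N)$.

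To make sense of $\nu$, first I would note that the corollary of Corollary \ref{cor:mass-bounds} (finite speed of propagation), together with the total-mass bound $\bar\mu(t)(\R^N) \leq \|\Sigma\|$ of Proposition \ref{prop:z-invariance}(D), confines the supports $\spt \bar\mu(t)$ for $t \in [0,1]$ to a fixed compact $K_0$ with uniformly bounded total mass. The semi-decreasing property (Theorem \ref{thm:semi-decreasing}(B)) then produces the right-limit $\nu$ as a Radon measure supported in $K_0$, and testing against a compactly supported cutoff $\chi \in C_c(\R^N)$ with $\chi \equiv 1$ on $K_0$ and $0 \leq \chi \leq 1$ gives $\nu(\R^N) = \int \chi \, d\nu = \lim \int \chi \, d\bar\mu(t) \leq \|\Sigma\|$.

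The heart of the proof is the lower bound $\nu \geq \mu_\Sigma$. From the mass estimate $\mu_T(\R^N \times A) \leq |A|^{1/2}(1 + |A|^{1/2})\|\Sigma\|$ we have $\mu_T(\R^N \times (0, t)) \to 0$ as $t \to 0^+$, so $T(t, \infty) \to T(0, \infty) = T$ in mass norm and hence $\partial T(t, \infty) \to \partial T = \Sigma$ in flat norm. The preceding proposition's inequality $\bar\mu(t) \geq \mu_{\partial T(t, \infty)}$ combined with $\bar\mu(t)(\R^N) \leq \|\Sigma\|$ gives the uniform mass bound $\|\partial T(t, \infty)\|(\R^N) \leq \|\Sigma\|$; lower-semi-continuity of mass under flat convergence then pinches the total masses, $\|\partial T(t, \infty)\|(\R^N) \to \|\Sigma\|$. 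For integral currents, flat-plus-mass convergence upgrades to varifold convergence, so $\mu_{\partial T(t, \infty)} \to \mu_\Sigma$ as Radon measures. Passing to the limit in $\int \phi \, d\bar\mu(t) \geq \int \phi \, d\mu_{\partial T(t, \infty)}$ for arbitrary $\phi \in C_c(\R^N, \R_+)$ yields $\int \phi \, d\nu \geq \int \phi \, d\mu_\Sigma$, i.e.\ $\nu \geq \mu_\Sigma$.

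Combined with $\nu(\R^N) \leq \|\Sigma\| = \mu_\Sigma(\R^N)$, this forces $\nu = \mu_\Sigma$. The main obstacle is the initial-time continuity $\partial T(t, \infty) \to \Sigma$, which requires ruling out mass concentration of $T$ on the slice $\R^N \times \{0\}$ -- this is exactly what the mass bound for $T$ (itself a delicate consequence of the $I_\eps$ rescaling $T_\eps = (\kappa_\eps)_\sharp P_\eps$) provides. Everything else is a routine application of the semi-decreasing property, the preceding proposition's sandwich, and standard integral-current lower-semi-continuity.
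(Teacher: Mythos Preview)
Your argument is essentially the same as the paper's: both use the sandwich $\mu_{\partial T(t,\infty)} \leq \bar\mu(t)$ together with the total-mass bound $\bar\mu(t)(\R^N) \leq \|\Sigma\|$, and both need continuity of $t \mapsto \partial T(t,\infty)$ at $t=0$. If anything, you are more careful than the paper on one point: the paper simply asserts that $\|\partial T(t,\infty)\|$ is continuous in $t$, whereas you derive it from flat convergence plus the mass pinch and then upgrade to Radon-measure convergence of $\mu_{\partial T(t,\infty)}$.

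The one place to tighten is your treatment of $\bar\mu(0)$. You say the Brakke axioms only pin down $\bar\mu(t)$ for a.e.\ $t$ so you are ``free to set'' $\bar\mu(0) := \mu_\Sigma$, but in this construction $\bar\mu(0)$ is already a specific measure: it is obtained from $\mu(0)$ via Proposition \ref{prop:z-invariance}, which in turn comes from the compactness Theorem \ref{thm:brakke-compactness} applied to the translating flows $\mu_\eps$. So $\bar\mu(0) = \mu_\Sigma$ is an assertion to be proved, not a definition. The paper handles this in one line by applying the previous Proposition at $t=0$ (which gives $\bar\mu(0) \geq \mu_{\partial T(0,\infty)} = \mu_\Sigma$) and then the mass bound $\bar\mu(0)(\R^N) \leq \|\Sigma\|$; you should do the same. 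Once $\bar\mu(0) = \mu_\Sigma$ is established, the semi-decreasing property (Theorem \ref{thm:semi-decreasing}(B)) gives the measure inequality $\lim_{t\to 0^+}\bar\mu(t) \leq \bar\mu(0) = \mu_\Sigma$ directly, which is slightly cleaner than routing the upper bound through a cutoff $\chi$.
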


\begin{proof}
The previous Proposition shows $\bar\mu(0) \geq \mu_\Sigma$.  Since $\bar\mu(0)(\R^N) \leq ||\Sigma||$, we must have equality.

By Theorem \ref{thm:semi-decreasing} we know
\begin{equation}\label{eqn:cont-up-to-0}
\mu_{\Sigma} = \bar\mu(0) \geq \lim_{t \to 0^+} \bar\mu(t).
\end{equation}
But since the mass $||\partial T(t, \infty)||$ is continuous in $t$, we have
\[
||\Sigma|| = \lim_{t \to 0} \mu_{\partial T(t, \infty)}(\R^N) \leq \lim_{t \to 0} \bar\mu(t)(\R^N) \leq ||\Sigma|| .
\]
We therefore we must have equality in \eqref{eqn:cont-up-to-0}.
\end{proof}

We prove Theorem part B).

\begin{lemma}
For any $x \in \overline{\Omega} \times (0, \infty)$, we have
\[
\Theta_{refl(\partial\Omega\times \R)}(\mu_\eps, (x, 0)) = \Theta_{eucl}(P_\eps, x).
\]

And consequently, $\mu_\eps \in \mathcal D(n+1, N+1)$.
\end{lemma}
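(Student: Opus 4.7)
The plan exploits the translator structure of $\mu_\eps$: we have $\mu_\eps(t) = (\sigma_{-t/\eps})_\sharp \mu_{P_\eps}$ with $\sigma_s(x, z) = (x, z+s)$, and since $\partial_z$ is tangent to the barrier $\partial\Omega \times \R$, these translations preserve the free-boundary structure. Consequently, every parabolic blowup of $\mu_\eps$ reduces to an ordinary blowup of the \emph{static} varifold $P_\eps$, because the translation velocity $1/\eps$ is killed by parabolic rescaling.

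The first step is to identify the tangent flows at $(x, 0)$. A direct calculation shows that for any $\lambda_i \to 0$, the time-$s$ slice of $\dilD_{1/\lambda_i}(\mu_\eps - (x, 0))$ is the pushforward of $\mu_{P_\eps}$ under $y \mapsto (y - x)/\lambda_i - (s\lambda_i/\eps)\partial_z$. The extra term $(s\lambda_i/\eps)\partial_z$ vanishes as $\lambda_i \to 0$, so by Proposition \ref{prop:tangent-flows} the blowup subconverges to the static Brakke flow $t \mapsto C_x$, where $C_x$ is a tangent cone of $P_\eps$ at $x$. By Proposition \ref{prop:I_eps-variation}, $P_\eps$ has $L^\infty$ generalized mean curvature (bounded by $1/\eps$) and satisfies the free-boundary condition on $\partial\Omega \times (0, \infty)$, so tangent cones exist at every $x \in \overline\Omega \times (0,\infty)$, are stationary integral cones with free-boundary in an affine plane (or no boundary if $x \notin \partial\Omega \times \R$), and satisfy $\Theta_{eucl}(C_x, 0) = \Theta_{eucl}(P_\eps, x)$ by monotonicity for bounded-MC free-boundary varifolds (reducing to the interior case via reflection across the barrier tangent plane when necessary).

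The main equality then follows from Theorem \ref{thm:tangent-flows}: $\Theta_{refl(\partial\Omega\times\R)}(\mu_\eps, (x, 0)) = \Theta(\tilde{\mcfM'})$, where $\tilde{\mcfM'}$ is the static flow from the (possibly reflected) stationary cone $\tilde C_x$. Since a static cone flow is trivially self-shrinking, Proposition \ref{prop:eucl-gauss-density} yields $\Theta(\tilde{\mcfM'}) = \Theta_{eucl}(\tilde C_x, 0) = \Theta_{eucl}(P_\eps, x)$.

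For the consequence $\mu_\eps \in \mathcal{D}(n+1, N+1)$, suppose a (reflected) tangent flow of $\mu_\eps$ at some $(y, t)$ is a multiplicity-1 (quasi-)static plane. The same rescaling argument (now with base point $y + (t/\eps)\partial_z$) identifies this tangent flow with the static flow from the tangent cone of $P_\eps$ at $y + (t/\eps)\partial_z$, which must therefore be a multiplicity-1 plane, possibly with classical free-boundary. Since $P_\eps$ has $L^\infty$ mean curvature and classical free-boundary on the smooth barrier, Allard's $\epsilon$-regularity theorem (applied to the reflected current $P_\eps + A_\sharp P_\eps$ near the barrier, to reduce to the interior case) shows $P_\eps$ is a smooth graph with classical free-boundary near $y + (t/\eps)\partial_z$. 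Pulling back by $\sigma_{-t/\eps}$ makes $\mu_\eps$ smooth near $(y, t)$, so $(y, t)$ is a regular point. The main obstacle is justifying the free-boundary Allard regularity cleanly and matching the various density normalizations across the reflection; these should however follow from standard results once the translator rigidity is set up.
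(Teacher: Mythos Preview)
Your approach is essentially the paper's: identify tangent flows of the translator $\mu_\eps$ with static tangent cones of $P_\eps$, then chain Theorem \ref{thm:tangent-flows} and Proposition \ref{prop:eucl-gauss-density}.  Two points deserve attention.

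First, the density bookkeeping at boundary points is off.  When $x \in \partial\Omega \times \R$, the tangent cone $C_x$ lives in a half-space and the \emph{reflected} cone $\tilde C_x$ has $\Theta_{eucl}(\tilde C_x,0) = 2\,\Theta_{eucl}(P_\eps,x)$, not $\Theta_{eucl}(P_\eps,x)$.  The paper's own proof makes this explicit, writing
\[
\Theta_{eucl}(P_\eps, x) = \tfrac{1}{2}\,\Theta_{refl(\partial\Omega\times\R)}(\mu_\eps,(x,0)) \quad \text{when } x \in \partial\Omega \times \R,
\]
so the displayed equality in the Lemma is literally correct only at interior points.  This does not affect the real conclusion ($\mu_\eps \in \mathcal D$), but your sentence ``$\Theta_{eucl}(\tilde C_x,0) = \Theta_{eucl}(P_\eps,x)$'' is false at the boundary and should be replaced by the case split.

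Second, your proposed route to boundary regularity --- reflect $P_\eps$ across $\partial\Omega \times \R$ and apply interior Allard --- does not work as stated, because $\partial\Omega$ is curved.  The naive reflection $P_\eps + A_\sharp P_\eps$ across a curved barrier is not a varifold with locally bounded first variation (let alone $L^\infty$ mean curvature); Proposition \ref{prop:reflect-varifold} only covers hyperplanes.  The paper instead invokes the free-boundary Allard theorem of Gr\"uter--Jost \cite{gruter-jost:allard} directly at boundary points, which is the correct tool here.  You flagged this as ``the main obstacle,'' and it is genuinely one: it does not reduce to reflection plus interior Allard without first straightening the barrier.
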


\begin{proof}
One readily verifies that since the flow $\mu_\eps$ moves by translation, any tangent flow $\mcfM'$ at $(x, 0)$ is a static cone $V$, where $V$ is some tangent cone of $P_\eps$ at $x$.  Then by Theorem \ref{thm:tangent-flows} and Proposition \ref{prop:eucl-gauss-density} we have
\begin{align*}
\Theta_{eucl}(P_\eps, x) &= \Theta_{eucl}(V) \\
&= \Theta(\mcfM') \\
&= \left\{\begin{array}{l l} \Theta_{refl(\partial\Omega \times \R)}(\mu_\eps, (x, 0)) & x \in \Omega\times \R \\ 
 \frac{1}{2} \Theta_{refl(\partial\Omega \times \R)}(\mu_\eps, (x, 0)) & x \in \partial\Omega \times \R \end{array}\right.
\end{align*}

If some (reflected) tangent flow of $\mu_\eps$ at $(x, t)$ is a multiplicity-1 (quasi-)static plane, then by Allard \cite{allard:first-variation} (if $x \in \Omega \times  \R$) or Gruter-Jost \cite{gruter-jost:allard} (if $x \in \partial\Omega \times \R$), the above shows that $P_\eps$ is regular at $x + (0, t/\eps)$.  This proves $\mu_\eps \in \mathcal D(n+1, N+1)$.
\end{proof}

The $\mu_i(t)$ converge to $\mu(t)$ as free-boundary Brakke flows.  Therefore by Theorem \ref{thm:brakke-closure}, $\mu(t) \in \mathcal D(n+1, N+1)$ also.  By Proposition \ref{prop:z-invariance} it clearly follows that $\mcfM \equiv \bar\mu \in \mathcal D(n, N)$.

\subsection{Smooth initial data}

Let $\mcfM \in \mathcal D(n, N)$ be a free-boundary Brakke flow in $\Omega$ (write $S = \partial\Omega$), with smooth initial data $\mcfM(0) = \Sigma$.  We do not actually require $\mcfM$ to arise from elliptic regularization.  The key arguments in this section are due to White.

\begin{lemma}\label{lem:smooth-initial-data}
Take $x \in \Sigma$, and let $L = \{0\}^{N-n} \times T_x\Sigma \times \R \subset \R^{N,1}$.  For any $\eps > 0$, there is a $r = r(x, \eps)$ so that
\[
\mcfM \cap (B^{N,1}_{r}(x, 0) \cap \{t > 0\}) = \graph(u), 
\]
for some smooth $u: \Omega \subset L \to L^\perp$, which satisfies
\begin{equation}\label{eqn:graph-estimates}
|u|/\sqrt{t} + |Du| + |D^2 u| \sqrt{t} + |\partial_t u|\sqrt{t} \leq \eps . 
\end{equation}
\end{lemma}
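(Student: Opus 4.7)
The plan is to first establish that $\Theta_{refl(\partial\Omega)}(\mcfM,\cdot)$ is close to $1$ in a punctured spacetime neighborhood of $(x,0)$, deduce smoothness of $\mcfM$ there via Theorem \ref{thm:brakke-closure}, and then obtain the parabolic $C^2$-smallness of \eqref{eqn:graph-estimates} via a blow-up contradiction.

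For the density bound: given $(y,s)$ with $s>0$ small and $|y-x|$ small, I would evaluate the reflected Gaussian density at the initial-data scale $r=\sqrt{s}$, where
\[
\Theta_{refl(\partial\Omega,\kappa)}(\mcfM,(y,s),\sqrt{s}) = \int f_{S,\kappa,(y,s)}(\cdot,0) \, d\mu_\Sigma.
\]
Since $\Sigma$ is smooth at $x$ and meets $S$ orthogonally, and the reflected piece $\tilde\rho\tilde\phi$ in $f_{S,\kappa}$ contributes exactly the missing $\tfrac12$ when $x\in\partial\Sigma\cap S$, this integral converges to $1$ as $(y,s)\to(x,0)$. The monotonicity formula (Theorem \ref{thm:pretty-mono}) then gives $\Theta_{refl(\partial\Omega,\kappa)}(\mcfM,(y,s),r) \le e^{As^{1/4}}(1+o(1)) + O(s)$ for all $r\le\sqrt{s}$, hence $\Theta_{refl(\partial\Omega)}(\mcfM,(y,s)) < 1+\eta$ in a neighborhood. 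Since $\mcfM\in\mathcal D(n,N)$, the second part of Theorem \ref{thm:brakke-closure} makes each such $(y,s)$ a regular point, and Theorem \ref{thm:brakke-reg} applied to $\spU_{s_0} = \{|y'-x|<\delta\}\times(s_0,T)$ yields a uniform bound on $r_{2,\alpha}(\mcfM,\cdot)^{-1} d(\cdot,\partial\spU_{s_0})$.

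For the blow-up: suppose for contradiction the lemma fails for some $\eps>0$. Then one finds $r_i\downarrow 0$ and points $X_i\in B^{N,1}_{r_i}(x,0)\cap\{t>0\}$ whose rescalings $X_i' := \dilD_{1/r_i}(X_i-(x,0)) \in \overline{B^{N,1}_1}\cap\{t>0\}$ violate the parabolic scale-invariant $C^2$ bound over $L$. Consider $\mcfM_i := \dilD_{1/r_i}(\mcfM-(x,0))$: the initial data $(\Sigma-x)/r_i\to T_x\Sigma$ smoothly, $(\partial\Omega-x)/r_i\to T_xS$ in $C^{3,\alpha}_{loc}$, and the previous step gives uniform $r_{2,\alpha}$ bounds on $\mcfM_i$ in $\{0<t\le 1\}\cap B^{N,1}_1$. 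Arzel\`a--Ascoli combined with Theorem \ref{thm:brakke-compactness} extracts a $C^{2,\alpha/2}_{loc}$ subsequential limit $\mcfM'$, which is a free-boundary Brakke flow with $\mcfM'(0)=T_x\Sigma$ and density $\le 1+\eta$ throughout. Reflecting via Proposition \ref{prop:brakke-reflect} and invoking Corollary \ref{cor:least-density} forces $\mcfM'$ to be the static plane $T_x\Sigma$. But then the graph functions $u_i$ of $\mcfM_i$ over $L$ converge to $0$ in the parabolic scale-invariant $C^2$-norm uniformly on compact subsets of $\{t>0\}$, contradicting the choice of $X_i'$.

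The main obstacle is that the density bound must degrade gracefully as $s\to 0^+$: Theorem \ref{thm:brakke-reg} degenerates at $\{t=0\}$, so one cannot directly extract regularity up to the initial time. The smoothness of $\Sigma$ at $x$ together with the orthogonal contact at boundary points is exactly what makes the scale-$\sqrt{s}$ Gaussian integral against $\mu_\Sigma$ converge cleanly to $1$, which in turn pins down the blow-up limit as the multiplicity-$1$ static plane rather than some other self-shrinker.
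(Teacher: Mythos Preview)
Your overall strategy---direct density control at the initial-time scale, then a blow-up contradiction---is close to the paper's, and your preliminary density computation at scale $\sqrt{s}$ is a clean addition the paper leaves implicit. But there are two genuine gaps.

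\textbf{Centering of the blow-up.} You rescale by $1/r_i$ about $(x,0)$, so the rescaled bad point $X_i'$ lies in $\overline{B^{N,1}_1}\cap\{t>0\}$, but nothing prevents its time component from tending to $0$. In that case your $C^{2,\alpha/2}$ convergence to the static plane, which holds only on compact subsets of $\{t>0\}$, does not see $X_i'$ and no contradiction results. The paper avoids this by writing the bad point as $Y_i=(y_i,t_i)$ and rescaling by $1/\sqrt{t_i}$ about $(y_i,0)$; then $Y_i$ always lands at $(0,1)$, safely inside the region of uniform convergence. This choice also makes the initial slice $(\Sigma-y_i)/\sqrt{t_i}$ converge to $T_x\Sigma$ in $C^1_{loc}$, which is what pins down the limit.

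\textbf{Identifying the limit.} Corollary~\ref{cor:least-density} applies only to self-shrinkers, and the limit $\mcfM'$ is not one a priori. The correct argument is that $\tilde\mcfM'(0)$ is a multiplicity-$1$ plane, so by Huisken monotonicity $\Theta(\tilde\mcfM',X,r)\le 1$ for every $X$ with $t>0$ and every $r\le\sqrt{t}$; on the support $\Theta\ge 1$, hence $\Theta\equiv 1$ at all scales, and the rigidity case of monotonicity forces $\tilde\mcfM'$ to be the static plane.

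Finally, the paper notes that the contradiction argument only shows $\mcfM$ is \emph{locally} graphical near each bad point; a second blow-up centered at $(x,0)$ is used to rule out multiple sheets. Your argument (once the centering is fixed) does not explicitly address this single-sheetedness.
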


\begin{proof}
Suppose there is a sequence of points $Y_i = (y_i, t_i) \in \mcfM$, with $y_i \to x$, $t_i \to 0$, satisfying for each $i$ one of the following:
\begin{enumerate}
\item[A)] $Y_i$ is not a regular point, 

\item[B)] the projection $\proj_L$ restricted to $\mcfM$ is not a local diffeomorphism at $Y_i$, 

\item[C)] $\mcfM = \graph(u)$ near $Y_i$, but $|Du| + |D^2 u| \sqrt{t_i} + |\partial_t u| \sqrt{t_i}  \geq \eps$.
\end{enumerate}
We can of course assume a single condition fails for all $i$.

Consider the dilated flows
\[
\mcfM_i = \dilD_{1/\sqrt{t_i}}(\mcfM - (y_i, 0)).
\]
Notice condition C) is parabolic-scale-invariant.

Pass to a subsequence and obtain convergence $\mcfM_i \to \mcfM'$.  Since $\mcfM_i(0) = \frac{1}{\sqrt{t_i}} (\Sigma - y_i)$ converges to $T_x\Sigma$ in $C^1_{loc}$, we must have that $\mcfM'$ is a static multiplicity-1 plane.  If $x \in S$ then $\mcfM'$ has free-boundary in a plane.

This implies that $\limsup_i \Theta_{refl(S)}(\mcfM_i, X) = 1$ for any $X \in \R^N \times (0, \infty)$.  Therefore by Theorem \ref{thm:brakke-closure}, $\mcfM_i$ is regular at any such $X$ for $i$ sufficiently large.

Theorem \ref{thm:brakke-reg} implies the $r_{2,\alpha}(\mcfM_i)$ is uniformly bounded on compact subsets of $r_{2,\alpha}$.  Passing to a further subsequence as necessary, and using Lemma \ref{lem:boundary-reg-points}, we obtain smooth convergence $\mcfM_i \to \mcfM'$ in $\spU$.

In particular, for $i$ sufficiently large, we must have that
\[
\mcfM_i \cap (B^N_2(0) \times [-1/4, 4]) = \graph(u_i)
\]
for $u_i : \Omega_i \subset L \to L^\perp$ satisfying
\[
|Du_i| + |D^2 u_i| + |\partial_t u_i|  \to 0
\]
as $i \to \infty$.

But by construction we have $\tilde Y_i = \dilD_{1/\sqrt{t_i}} Y_i = (0, 1)$.  So $\tilde Y_i \in B^N_2(0) \times [-1/4, 4]$ is eventually graphical, with estimates \eqref{eqn:graph-estimates}, contradicting our intial choice.

This shows that near $x$, $\mcfM \cap \{0 < t\}$ splits as a union of graphs over $L$, each with estimate \eqref{eqn:graph-estimates}.  But by repeating the same blow-up argument with the dilates $\dilD_{1/\sqrt{t_i}}(\mcfM - x)$, we deduce $\mcfM$ must be one-sheeted.
\end{proof}

This shows that for some $\delta(\Sigma)$, $\mcfM$ is regular on $\{0 < t \leq \delta\}$, is $C^{0,1}$ (in spacetime) and $C^1$ (in space) up to $t = 0$.  This argument by itself is not sufficient to prove $C^\infty$ up to $t = 0$, since it only requires $\Sigma$ to be $C^1$.

We prove using a barrier argument that $\mcfM$ is $C^{1,1}$ up to $t = 0$.  Parabolic Schauder estimates will then give us $C^\infty$ (or, in general, as much regularity as $\Sigma$).

Choose $\kappa$ smaller than $r_S/c_1$ and $1/30$-th the $C^{1,1}$ regularity scale of $\Sigma$.  Given any $x \in \Sigma$, and unit vector $v$ in the normal bundle $N_x\Sigma$, we can attach a small ball $B_{x, v}$ of radius $\kappa$ passing through $x$ and having outward normal vector $-v$.

We wish to show that, for a short time $t \in [0, t_0(n, \kappa)]$, $\Sigma$ says disjoint from the ball $B_{x, v}(t)$ obtained by shrinking $B_{x, v}$ by the factor $1 - c(n, \kappa)t$.  This will imply $u$ as in Lemma \ref{lem:smooth-initial-data} satisfies the improved estimate
\begin{equation}\label{eqn:improved-est}
|u|/t + |Du|/\sqrt{t} + |D^2 u| + |\partial_t u| \leq 1
\end{equation}
in a sufficiently small spacetime neighborhood of $x$.  Then \eqref{eqn:improved-est} implies $\mcfM$ extends as in $C^{1,1}$ to time $0$, and therefore completes the proof of Theorem \ref{thm:existence}.

If $x \not\in B_{r_S/10}(\partial\Omega)$, then by considering the evolution of just $\phi(x, t)$ from Theorem \ref{thm:cutoff-evolution} we obtain the desired disjointness directly.

If $x \in B_{r_S/10}(\partial\Omega)$, then we may have to work a little harder.  By the free-boundary condition
\[
\tilde B_{x, v} = \{ \tilde x : x \in B_{x, v}\}
\]
is either disjoint from $\Sigma$ (if $x \not\in\partial\Sigma$), or touches tangentially at $x$ also (if $x \in \partial\Sigma$).  Let $y$ be the center of $B_{x,v}$.  If $y \in \overline\Omega$ we simply apply Theorem \ref{thm:cutoff-evolution} condition A).  Otherwise, we must apply Theorem \ref{thm:cutoff-evolution} condition B) to a small ball (and its reflection) centered at $\zeta(y)$, to ensure $\Sigma$ stays disjoint from a neighborhood of $y$.  Then we are justified in applying Theorem \ref{thm:cutoff-evolution} condition C) to deduce the required disjointness.  This completes the proof of Theorem \ref{thm:existence}.

\section{Appendix}

We show how the errors (in space) from straightening the barrier tranfer to errors in spacetime.  This is in principle standard but the spacetime nature of the perturbation makes it a little more confusing.

Recall the interpolation inequality: if $i' + 2j' \leq \ell$, then
\[
r^{i'+2j'}|D^{i'} \partial_t^{j'} u|_{0, B^{n,1}_r} \leq r^{\ell+\alpha} [u]_{\ell,\alpha, B^{n,1}_r} + |u|_{0, B^{n,1}_r} .
\]
So to control the $C^{\ell,\alpha}$ spacetime norm of $u$ in $B^{n,1}_r$, it suffices to control the top Holder semi-norms and the $C^0$ norm.  Of course the same kind of interpolation inequality holds for the standard Holder spaces.

We make use of the following identities:
\begin{align}
[f\circ g]_{\alpha, B^{n,1}_r} &\leq [f]_{\alpha, g(B^{n,1}_r)} (|Dg|_{0, B_r} + r |\partial_t g|_{0, B_r})^\alpha \label{eqn:sp-holder-comp1} \\
[f\circ g]_{\alpha, B^{n,1}_r} &\leq (|Df|_{0, g(B_r)} + r |\partial_t f|_{0, g(B_r)}) [g]_{\alpha, B^{n,1}_r} . \label{eqn:sp-holder-comp} \\
[f_1 \cdots f_k]_{\alpha, U} &\leq \sum_{i=1}^k [f_i]_{\alpha, U} \prod_{j\neq i} |f_j|_{0, U} \label{eqn:holder-product}
\end{align}

The following is a straightforward but tedious application of the inverse function theorem.
\begin{lemma}\label{lem:perturb-graph}
Let $u : U \subset B^{n,1}_r \to \R^{N-n}$ be $C^1$ in both variables, with
\[
r^{-1} |u| + |Du| + r |\partial_t u| \leq 1 .
\]
Suppose $\phi : B^N_{3r} \to \R^N$ satisfies
\[
\phi = Id + e, \quad r^{-1} |e| + |De| \leq \epsilon \leq \epsilon_1(n) .
\]

Then if we extend $\phi(x, t) := (\phi(x), t)$ to act on $\R^{N,1}$ we have $\phi(\graph(u)) = \graph (\tilde u)$, where
\[
\tilde u(y, t) = u(y + \xi(y, t), t) + \eta(y, t),
\]
with the estimates
\[
r^{-1} |\xi| + |D\xi| + r |\partial_t \xi| \leq c(n) \epsilon, \quad r^{-1} |\eta| + |D\eta| + r |\partial_t \eta| \leq c(n) \epsilon.
\]

If further we have
\[
r^{\ell + \alpha-1} [e]_{\ell, \alpha, B^n_r} \leq \epsilon, \quad r^{\ell+\alpha-1} [u]_{\ell,\alpha, B^{n,1}_r} \leq 1, 
\]
Then
\[
r^{\ell+\alpha-1} [\xi]_{\ell,\alpha, B^{n,1}_{r/2}} \leq c(n, \ell, \alpha) \epsilon, \quad r^{\ell+\alpha-1} [\eta]_{\ell,\alpha, B^{n,1}_{r/2}} \leq c(n,\alpha, \ell)\epsilon.
\]
\end{lemma}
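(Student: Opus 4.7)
The plan is to reduce to the parameterized inverse function theorem.  Split $\R^N = \R^n_x \times \R^{N-n}_z$ and decompose $e = (e^x, e^z)$ accordingly.  A point on $\graph(u)$ has the form $(x, u(x,t), t)$, so under $\phi$ it maps to $(F(x,t), U(x,t), t)$ where
\[
F(x,t) := x + e^x(x, u(x,t)), \qquad U(x,t) := u(x,t) + e^z(x, u(x,t)).
\]
Computing $\partial_x F = I + \partial_x e^x + \partial_z e^x \cdot Du$ gives $|\partial_x F - I|_0 \leq 2\epsilon$, so choosing $\epsilon_1(n) \leq 1/4$ the matrix $\partial_x F$ is invertible with $|(\partial_x F)^{-1}|_0 \leq 2$.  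The standard inverse function theorem (applied with $t$ as a parameter, which is smooth and does not affect invertibility) produces a smooth inverse $G(\cdot,t)$ on $B^{n,1}_{r/2}$ with $F(G(y,t),t) = y$.  Set $\xi(y,t) := G(y,t) - y$ and $\eta(y,t) := e^z(G(y,t), u(G(y,t),t))$; then $\phi(\graph u) \cap B^{n,1}_{r/2} = \graph(\tilde u)$ with $\tilde u$ in the required form.

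For the low-order estimates, $|\xi|_0 = |e^x \circ (G, u(G,\cdot))|_0 \leq r\epsilon$ trivially.  Differentiating the inversion identity in $y$ gives $D_yG = (\partial_x F)^{-1}|_{(G,u(G,t))}$, which by Neumann series yields $|D_yG - I|_0 \leq c(n)\epsilon$.  Differentiating in $t$ gives $\partial_t G = -(\partial_x F)^{-1}(\partial_z e^x \cdot \partial_t u)|_{(G,u(G,t))}$, and with $|\partial_t u|_0 \leq 1/r$ this yields $r|\partial_t G|_0 \leq c(n)\epsilon$.  The bounds for $\eta = e^z \circ (G, u(G,\cdot))$ follow by the same chain-rule computations, combining $|e^z|_0 \leq r\epsilon$, $|De^z|_0 \leq \epsilon$ with the just-established bounds on $G$ and the hypotheses on $u$.

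For the higher-order estimates, I would argue by induction on $\ell$.  First bound $F$ itself: $F - \mathrm{Id} = e^x \circ (\mathrm{Id}, u)$, and the composition formulas \eqref{eqn:sp-holder-comp1}--\eqref{eqn:holder-product} together with parabolic interpolation give $r^{\ell+\alpha-1}[F - \mathrm{Id}]_{\ell,\alpha, B^{n,1}_r} \leq c(n,\ell,\alpha)\epsilon$.  Next bound $G$: differentiate the identity $G(F(x,t),t) = x$ repeatedly to express each $D^i \partial_t^j G$ with $i + 2j = \ell$ as a polynomial expression in $(\partial_x F)^{-1}$ and lower derivatives of $F$ (and of $G$, already controlled by induction).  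Applying \eqref{eqn:sp-holder-comp1}--\eqref{eqn:holder-product} once more gives $r^{\ell+\alpha-1}[G - \mathrm{Id}]_{\ell,\alpha, B^{n,1}_{r/2}} \leq c(n,\ell,\alpha)\epsilon$.  The bounds on $[\xi]_{\ell,\alpha}$ and $[\eta]_{\ell,\alpha}$ then follow by one more application of the composition identities to $\xi = G - \mathrm{Id}$ and $\eta = e^z \circ (G, u(G,\cdot))$.

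The main obstacle is purely bookkeeping: the explicit formulas for $D^i\partial_t^j G$ in terms of derivatives of $F$ blow up combinatorially, and one must check that every resulting product carries the correct power of $r$ (which is what makes the scale-invariant statement work out cleanly).  Parabolic interpolation is needed to bound the intermediate-order seminorms of $u$, $e^x$, $e^z$ that appear in the Leibniz expansions, since the hypotheses only directly control $C^0$ and top-order Hölder seminorms.  Once this is done carefully for one multi-index, the same argument applies uniformly across all indices of order $\leq \ell$, completing the proof.
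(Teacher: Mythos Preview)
Your proposal is correct and follows essentially the same approach as the paper: your $F$, $G$, $e^x$, $e^z$ are exactly the paper's $A$, $A_t^{-1}$, $e_1$, $e_2$, and both arguments invert the base map via the inverse function theorem, obtain the $C^1$ estimates by chain rule, and then handle the higher H\"older norms by expressing derivatives of $G=A^{-1}$ inductively in terms of $(DA)^{-1}$ and derivatives of $e$ and $u$, invoking the composition/product identities \eqref{eqn:sp-holder-comp1}--\eqref{eqn:holder-product}. Your remark that the difficulty is purely bookkeeping of $r$-powers and interpolation matches the paper's own assessment.
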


\begin{proof}
Write $A(x, t) = x + e_1(x, u(x, t))$, so that
\[
\phi(x, u(x, t)) = (A(x, t), u(x) + e_2(x, u(x, t)).
\]
By assumption we have
\begin{align}\label{eqn:A-Dx}
r^{-1} |A - Id| \leq \epsilon, \quad |DA - Id| \leq |De_1|(1 + |Du|) \leq c(n) \epsilon,
\end{align}
and
\begin{equation}\label{eqn:A-Dt}
r |\partial_t A| \leq r |D e_1| |\partial_t u| \leq c(n) \epsilon.
\end{equation}

Therefore, by the inverse function theorem an inverse $A_t^{-1} \equiv A(\cdot, t)^{-1}$ exists for each time slice, and we can set
\begin{align*}
\tilde u(y, t) 
&= u(A^{-1}_t(y), t) + e_2(A^{-1}_t(y), u(A^{-1}_t(y, t))) \\
&=: u(y + \xi(y, t), t) + \eta(y, t).
\end{align*}
where
\[
\xi(y, t) = y - A^{-1}_t(y), \quad \eta(y, t) = e_2(A^{-1}_t(y), u(A^{-1}_t(y), t)) .
\]

From \eqref{eqn:A-Dx}, \eqref{eqn:A-Dt} we immediately obtain
\[
r^{-1}|A^{-1} - Id| \leq c(n) \epsilon, \quad |DA^{-1} - Id| \leq c(n) \epsilon, \quad r |\partial_t A^{-1}| \leq r|DA^{-1}| |\partial_t A| \leq c(n)\eps.
\]
This proves the $C^1$ estimates on $\zeta$.  The required $C^1$ estimate on $\eta$ follows similarly, e.g.:
\begin{align*}
|\partial_t \eta| &\leq |D e_2| ( |\partial_t A^{-1}| + |\partial_t u| + |Du| |\partial_t A^{-1}|) \leq c(n) \epsilon/r .
\end{align*}

To prove the higher order estimates on $\zeta, \eta$, we proceed as follows.  First, by an easy induction one can show that $D^\ell \partial^m_t (A^{-1})$ is a linear combination of terms involving $(DA)^{-1}$, $D^a e_1|_{(id, u)\circ A^{-1}}$, and $D^b \partial_t^c (id, u)|_{A^{-1}}$, where $|a| \geq 1$ in each term.

Using relation \eqref{eqn:sp-holder-comp1}, and our assumed bounds for $u$, we have
\begin{equation}\label{eqn:holder-e}
[(D^a e_i)\circ (id, u) \circ A^{-1}]_{\alpha, B^{n,1}_{r/2}} \leq c(n, \alpha) [D^a e_i]_{\alpha, B^n_r}
\end{equation}
for any $a$.  Notice the LHS is the spacetime Holder semi-norm, while the RHS is the regular Holder semi-norm.  This gives a Holder bound of the form
\[
r^{\ell+2m+\alpha-1} [D^\ell\partial_t^m A^{-1}]_{\alpha, B^{n,1}_{r/2}} \leq c(n, \ell, m, \alpha) \eps,
\]
which is the required estimate for $\zeta$.

By similar reasoning we have that $D^\ell\partial_t^m \eta$ is a linear combination of terms involving $D^a e_2|_{(id, u) \circ A^{-1}}$ and $D^b\partial_t^c ((id, u) \circ A^{-1})$.  Now use \eqref{eqn:holder-e}, \eqref{eqn:holder-product} and our assumed regularity of $u$ to obtain the Holder estimate on $\eta$.
\end{proof}

\begin{lemma}\label{lem:boundary-reg-points}
Let $\mcfM_i, \mcfM$ be a sequence in $\mathcal S(n,N)$, where $\mcfM_i \to \mcfM$ as free-boundary Brakke flows.  Suppose $0$ is a regular point of each $\mcfM_i$, and 
\[
\inf_i r_{2,\alpha}(\mcfM_i, 0)  > 0.
\]
Then $0$ is a regular point of $\mcfM$, and the $\mcfM_i$ converge to $\mcfM$ in $C^{2,\alpha}$ near $0$.

If, additionally, the barriers converge in $C^\infty$, then convergence near $0$ is smooth.
\end{lemma}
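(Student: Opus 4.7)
The plan is a direct compactness argument for graphical solutions. The hypothesis $\inf_i r_{2,\alpha}(\mcfM_i, 0) \geq r_0 > 0$ means that, after a space-rotation $R_i$ which I may assume converges along a subsequence, each $\spt\mcfM_i$ restricted to a spacetime ball of radius $r_0$ around $0$ is the graph of some $u_i : U_i \subset B^{n,1}_{r_0} \to \R^{N-n}$ with $|u_i|_{2,\alpha, B^{n,1}_{r_0}} \leq c$. If $0 \in \partial\Omega$, I further precompose with the straightening map $\Phi_i^{-1}$ from \eqref{eqn:straighten} centered at $\zeta_i(0)$: since $\Omega_i \to \Omega$ in $C^{3,\alpha}_{loc}$ and $\inf r_{3,\alpha}(\partial\Omega_i) > 0$ by definition of $\mathcal S(n,N)$, the $\Phi_i^{-1}$ converge in $C^{3,\alpha}_{loc}$ to the corresponding $\Phi^{-1}$ for $\partial\Omega$. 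Lemma \ref{lem:perturb-graph} then rewrites $\Phi_i^{-1}(\graph(u_i)) = \graph(\tilde u_i)$ as a half-space graph with $|\tilde u_i|_{2,\alpha} \leq c$ and standard Neumann boundary conditions on the straightened flat barrier. Each $\tilde u_i$ solves the graphical MCF equation $\partial_t \tilde u_i = g_i^{pq} D_{pq} \tilde u_i$ in the pullback metric $g_i = \Phi_i^* \delta$, whose coefficients $g_i^{pq}$ converge uniformly in $C^{0,\alpha}$ on the relevant domain.

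Next, Arzela-Ascoli produces a subsequential $C^{2,\alpha'}$ limit $\tilde u$ for every $\alpha' < \alpha$, which inherits $|\tilde u|_{2,\alpha} \leq c$ and solves the limiting MCF equation with the Neumann condition on the limit straightened barrier. Pushing forward by $\Phi$ yields a smooth, proper mean curvature flow $\graph(u)$ with classical free-boundary in $\partial\Omega$ near $0$. To identify $\graph(u)$ with $\spt\mcfM$ near $0$, I would use that $\mu_i(t) \to \mu(t)$ as Radon measures for each $t$ in the interior of the time-domain, and that $\mu_i(t) = \haus^n \llcorner \graph(u_i(\cdot, t))$ with multiplicity one in the regular neighborhood. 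Combined with the Hausdorff convergence of supports (corollary following Lemma \ref{lem:mono-usc}) and upper-semi-continuity of the reflected density, $\graph(u)$ must coincide with $\spt\mcfM$ inside a smaller spacetime ball and carry multiplicity one, so $0$ is a regular point of $\mcfM$ with $r_{2,\alpha}(\mcfM,0) > 0$. Uniqueness of this limit gives full-sequence $C^{2,\alpha'}$ convergence, and interpolation against the uniform $C^{2,\alpha}$ bound upgrades this to $C^{2,\alpha}$ convergence.

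If $\partial\Omega_i \to \partial\Omega$ in $C^\infty$, I bootstrap: once I have $C^{2,\alpha}$ convergence of $\tilde u_i$ and $C^{k,\alpha}$ convergence of $\Phi_i^{-1}$, the coefficients of the MCF equation converge in $C^{k-2,\alpha}$, and parabolic Schauder estimates with Neumann boundary data (e.g.\ Theorem 4.23 in \cite{lieberman:parabolic}, as used in the proof of Theorem \ref{thm:brakke-reg}) yield uniform $C^{k,\alpha}$ bounds on $\tilde u_i$ on shrunken domains for every $k$; Arzela-Ascoli together with uniqueness of the limit then gives $C^\infty$ convergence. The main obstacle is the identification step: a priori the graphical $C^{2,\alpha'}$ limit only produces a smooth piece that sits inside $\spt\mcfM$ near $0$, and ruling out additional Brakke-flow support collapsing into the neighborhood (which could destroy regularity of $\mcfM$ at $0$) relies essentially on the density and support upper-semi-continuity machinery from the previous section applied to the multiplicity-one graphs $\graph(u_i)$, which pins down that $\graph(u)$ exhausts $\spt\mcfM$ locally.
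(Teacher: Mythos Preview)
Your proposal is correct and follows essentially the same architecture as the paper: straighten the barrier via $\Phi_i^{-1}$, invoke Lemma~\ref{lem:perturb-graph} to obtain uniformly $C^{2,\alpha}$-bounded graphs $\tilde u_i$ over a fixed half-space with Neumann data, apply Arzela--Ascoli for a $C^{2,\alpha'}$ limit, and push forward by $\Phi$.

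Two minor deviations are worth flagging. First, your identification step is more elaborate than necessary: since $\mu_i(t) \to \mu(t)$ as Radon measures and, inside the regularity ball, $\mu_i(t)$ is exactly $\haus^n \llcorner \graph(u_i(\cdot,t))$ with multiplicity one, the graph convergence $u_i \to u$ immediately forces $\mu(t) = \haus^n \llcorner \graph(u(\cdot,t))$ there---no appeal to density upper-semi-continuity or Hausdorff convergence of supports is needed to rule out extra pieces. Second, for the $C^\infty$ convergence under smooth barrier convergence, the paper does not bootstrap Schauder on the nonlinear equation as you propose; instead it writes the difference $w^{(i)} = \tilde u^{(i)} - \tilde u$ as a solution of a \emph{linear} parabolic equation (obtained by the usual integral-form mean-value linearization of the nonlinear operator) whose coefficients and inhomogeneity go to zero in $C^\infty$, and then applies linear Schauder once. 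Both routes work; the paper's linearization is slightly cleaner in that it avoids tracking how many derivatives of $\Phi_i$ feed into each bootstrap step.
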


\begin{proof}
If $0$ is uniformly bounded away from the barriers $S_i$, then this follows from Arzela-Ascoli and interior Schauder estimates.  To handle points at the boundary we will straighten the barrier.

Take $0 \in S$, and by replacing $\mcfM_i$ with $\mcfM_i - (\zeta_i(0), 0)$ we can assume $0 \in S_i$ also.  Pass to a subsequence, rotate by a fixed amount in space, and replace $\mcfM_i$ with $\mcfM_i - (0, t_i)$ (with $t_i \to 0$) as necessary, and we have
\[
\mcfM_i \cap (B_\rho^{N-n} \times B_\rho^{n,1}) = \graph(u^{(i)}),
\]
where $u^{(i)} : Q_i \times I \subset B^{n,1}_\rho \to \R$ is uniformly bounded in $C^{2,\alpha}$, and $I$ is either the interval $[-\rho^2, \rho^2]$ or $[-\rho^2, 0]$.

Let $\Phi_i$, $\Phi$ be the map \eqref{eqn:straighten} straightening the barriers $S_i$, $S$ (resp.), centered at $0$.  Using Lemma \ref{lem:perturb-graph}, we have
\[
\Phi_i^{-1} (\graph(u^{(i)})) = \graph(\tilde u^{(i)}),
\]
where $\tilde u^{(i)}$ is uniformly bounded in $C^{2,\alpha}$ also.  There is a fixed half-space $H \subset \R^n$, so that $\tilde u^{(i)} : H \times I \to \R$ has Neumann boundary conditions in $\partial H$.

Arzela-Ascoli implies $\tilde u^{(i)}$ subsequentially converge in $C^{2,\alpha'}$ to some $\tilde u : H \times I \to \R$.  By definition of free-boundary convergence, $\Phi_i \to \Phi$ in $C^{2,\alpha}$.  We deduce that
\[
\mcfM \cap (B^{N-n}_\rho \times B^{n,1}_\rho) = \Phi(\graph(\tilde u)) =: \graph(u).
\]
This shows $\mcfM$ is proper and $C^{2,\alpha'}$ near $0$.

$\tilde u$ is a graphical mean curvature flow in the pullback metric $\gamma = \Phi^*\delta$.  Therefore $\tilde u$ satisfies
\begin{equation}\label{eqn:graphical-mcf-pullback}
\partial_t \tilde u - g^{kl} D_{kl} \tilde u = 0,
\end{equation}
where $g^{kl}$ is the inverse to the matrix $\gamma(e_k + D_k u, e_l + D_l u)$.  Provided $\rho$ is sufficiently small, by \eqref{eqn:reflect-bounds} this is a parabolic equation, with coefficients as regular as $D\tilde u$.  The usual bootstrap argument then gives $C^\infty$.

Suppose the barriers converge smoothly $S_i \to S$.  Each $\tilde u^{(i)}$ satisfies the graphical mean curvature equation \eqref{eqn:graphical-mcf-pullback}, with the pullback metric $\gamma_i = \Phi_i^*\delta$:
\[
\partial_t \tilde u^{(i)} = F(D\Phi_i, D\tilde u^{(i)}, D^2 \tilde u^{(i)})
\]
for some analytic $F$.  So the difference $w^{(i)} = \tilde u^{(i)} - \tilde u$ satisfies a linear PDE
\[
\partial_t w^{(i)} = a_{kl}^{(i)} D_{kl} w^{(i)} + b_k^{(i)} D_k w^{(i)} + c_k^{(i)} D_k(\Phi_i - \Phi) =: L w^{(i)}.
\]
Convergence of $\Phi_i$ and $w^{(i)}$ implies $L$ is uniformly elliptic, with constant terms going to $0$ in $C^{\infty}$.  The usual Schauder estimates then imply $w^{(i)} \to 0$ in $C^\infty$ also.
\end{proof}

The following boundary monotonicity formula appears in Allard \cite{allard:boundary}.

\begin{prop}[Allard \cite{allard:boundary}, Lemma 3.1]\label{prop:boundary-mono}
Let $V$ be an integral $n$-varifold in $U$ with free-boundary in $S$.  For any $0 < \tau < \sigma \leq r_S$, and any $h \in C^1_c(U, \R)$, we have
\begin{align*}
&\sigma^{-1} \int_{B_\sigma(S)} h |D^T d|^2 + d D^T h \cdot D^T d + h d tr_V D^2 d + h d S^TH \cdot D d d\mu_V \\
&\quad  - \tau^{-1}\int_{B_\tau(S)} h |D^T d|^2 + d D^T h \cdot D^T d + h d tr_V D^2 d + h d S^TH \cdot D d d\mu_V \\
&= \int_{B_\sigma(S) \setminus B_\tau(S)} D^T h \cdot D^T d + h tr_V D^2 d + h S^TH\cdot D d d\mu_V.
\end{align*}
Here $d = d(\cdot, S)$.

In particular, by the dominated convergence theorem, 
\begin{align*}
&\sigma^{-1} \int_{B_\sigma(S)} h |D^T d|^2 d\mu_V - \lim_{\tau\to 0} \tau^{-1} \int_{B_\tau(S)} h|D^T d|^2 d\mu_V \\
&=  \int_{B_\sigma(S) \setminus S} (1-d/\sigma) (D^T h \cdot D^T d + h tr_V D^2 d + h S^TH \cdot D d) d\mu_V .
\end{align*}
\end{prop}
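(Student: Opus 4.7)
The plan is to apply the free-boundary first variation to the vector field $X = h\phi(d)\, Dd$ for a suitable Lipschitz function $\phi$ with $\phi(0) = 0$. The condition $\phi(0) = 0$ forces $X \equiv 0$ on $S$, hence $X \in \cT(S, U)$, so the free-boundary relation $\delta V(X) = -\int S^T H \cdot X\, d\mu_V$ applies; the proof then reduces to choosing $\phi$ so that the resulting identity yields the stated formula after a purely algebraic rearrangement.

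Using $\vecdiv_V(fY) = D^T f \cdot Y + f \vecdiv_V Y$ and $\vecdiv_V Dd = \tr_V D^2 d$, the divergence computes to
\[
\vecdiv_V X = \phi(d)\, D^T h \cdot D^T d + h\phi'(d)|D^T d|^2 + h\phi(d)\tr_V D^2 d.
\]
Setting $g := D^T h \cdot D^T d + h \tr_V D^2 d + h\, S^T H \cdot Dd$, the first-variation identity rearranges to the basic relation
\[
\int h \phi'(d)|D^T d|^2\, d\mu_V = -\int \phi(d)\, g\, d\mu_V,
\]
valid first for $C^1$ functions $\phi$ with $\phi(0) = 0$ and support in $[0, r_S)$; a standard mollification extends it to Lipschitz $\phi$, using that $S^T H \in L^1_{loc}$ by Proposition \ref{prop:fb-finite-var} and that $|\tr_V D^2 d| \leq c(n)/r_S$ on $B_{r_S}(S)$.

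Next I specialize to the piecewise-linear function
\[
\phi_{\tau,\sigma}(d) = \begin{cases} (\sigma^{-1} - \tau^{-1})\,d, & 0 \leq d \leq \tau, \\ d/\sigma - 1, & \tau \leq d \leq \sigma, \\ 0, & d \geq \sigma, \end{cases}
\]
whose derivative is $\phi'_{\tau,\sigma}(d) = \sigma^{-1}\mathbf{1}_{[0,\sigma]}(d) - \tau^{-1}\mathbf{1}_{[0,\tau]}(d)$. Substituting into the basic relation yields
\[
\sigma^{-1}\!\int_{B_\sigma(S)} h|D^T d|^2\, d\mu_V \;-\; \tau^{-1}\!\int_{B_\tau(S)} h|D^T d|^2\, d\mu_V \;=\; -\int \phi_{\tau,\sigma}(d)\, g\, d\mu_V.
\]

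To finish I will verify the elementary identity $-\phi_{\tau,\sigma}(d) = (1 - d/\sigma)\mathbf{1}_{[0,\sigma]}(d) - (1 - d/\tau)\mathbf{1}_{[0,\tau]}(d)$ interval by interval. Integrating its right-hand side against $g\, d\mu_V$ produces $\int_{B_\sigma(S) \setminus B_\tau(S)} g\, d\mu_V - \sigma^{-1}\int_{B_\sigma(S)} d\, g\, d\mu_V + \tau^{-1}\int_{B_\tau(S)} d\, g\, d\mu_V$, and collecting the two $d\, g$ terms with the $|D^T d|^2$ terms on the other side produces exactly the stated formula. The ``in particular'' assertion follows by sending $\tau \to 0$ and applying dominated convergence (the local bounds are already in hand). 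The only real subtlety I anticipate is the mollification step extending the basic relation to the Lipschitz test function $\phi_{\tau,\sigma}$; this is routine given the integrability provided by Proposition \ref{prop:fb-finite-var}.
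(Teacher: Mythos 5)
Your proof is correct, and it takes a genuinely more direct route than the paper's, though both rest on the same underlying idea of feeding a cutoff vector field along $Dd$ into the free-boundary first variation. The paper uses $X = \phi(d/\rho)\,h\,d\,Dd$, with the factor of $d$ supplying the vanishing on $S$; it then rewrites the resulting identity as a first-order ODE $I - \rho I' = -\int \phi\,d\,g\,d\mu_V$ for the scale-normalized quantity $I(\rho)/\rho$, integrates that ODE from $\tau$ to $\sigma$, and finally unravels the intermediate double integral with a Fubini/layer-cake argument to match the stated form. You sidestep both the ODE and the layer-cake step by substituting the single piecewise-linear cutoff $\phi_{\tau,\sigma}$ directly into your ``basic relation'' and then rearranging via the pointwise identity $-\phi_{\tau,\sigma}(d) = (1-d/\sigma)\mathbf{1}_{[0,\sigma]}(d) - (1-d/\tau)\mathbf{1}_{[0,\tau]}(d)$, which holds interval by interval and encodes in one line precisely the cancellation the paper extracts from its layer-cake computation. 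It may help to observe that $\phi_{\tau,\sigma} = \psi_\sigma - \psi_\tau$ where $\psi_\rho(d) = (d/\rho - 1)\mathbf{1}_{[0,\rho]}(d)$; plugging $\psi_\rho$ alone into the basic relation yields the single-scale identity $\rho^{-1}\int_{B_\rho(S)} h|D^T d|^2\,d\mu_V = \int_{B_\rho(S)\setminus S}(1-d/\rho)\,g\,d\mu_V$, from which both the two-scale formula and the ``in particular'' limit follow by subtraction and dominated convergence. The one real obligation you correctly flag is the mollification step: the free-boundary identity is stated for $C^1$ vector fields, so you must first prove the basic relation for $C^1$ cutoffs with $\phi(0)=0$ and then pass to the Lipschitz $\phi_{\tau,\sigma}$; the $L^1_{\mathrm{loc}}$ bound on $S^TH$ from Proposition \ref{prop:fb-finite-var}, together with $|\mathrm{tr}_V D^2 d|\le c(n)/r_S$ on $B_{r_S}(S)$ and $|\phi_{\tau,\sigma}|\le 1$, supply exactly the domination needed for that limit and for the $\tau\to 0$ passage.
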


\begin{proof}
Let $X$ be the vector field
\[
X = \phi(d) h(x) d D d \equiv \phi(d) h(x) (x - \zeta(x)) ,
\]
where $\phi$ is a cutoff function to be determined.  We have
\[
div_V(X) = \phi' |D^T d|^2 h d + \phi d D^T h \cdot D^T d + \phi h |D ^T d|^2 + \phi h d tr_V D^2 d .
\]

Therefore, if
\[
I(\rho) = \int \phi(d/\rho) h(x) |D^T d|^2 d\mu_V,
\]
then
\[
I - \rho I' = -\int \phi d D^T h \cdot D^T d + \phi h d tr_V D^2 d + \phi h d S^TH \cdot D d\mu_V .
\]

Integrating the above relation between $\tau < \sigma$, and then taking $\phi \to 1_{[0, 1]}$, we obtain
\begin{align*}
&\sigma^{-1} \int_{B_\sigma(S)} h |D^T d|^2 d\mu_V - \tau^{-1} \int_{B_\tau(S)} h |D^T d|^2 d\mu_V \\
&= \int_\tau^\sigma \rho^{-2} \int_{B_\rho(S)} d D^T h \cdot D^T d + h d tr_V D^2 d +  h d S^TH \cdot D d d\mu_V d\rho .
\end{align*}

Apply the standard layer-cake formula to the measure
\begin{align*}
\nu(A) &= \int_A d D^T h \cdot D^T d + d h tr_V D^2d  +  h d S^T H \cdot D d d\mu_V ,
\end{align*}
to obtain
\[
RHS = \int_{B_\sigma(S)\setminus B_\tau(S)} d^{-1} d\nu -\sigma^{-1} \int_{B_\sigma(S)} d\nu + \tau^{-1} \int_{B_\tau(S)} d\nu ,
\]
which is the required equality.
\end{proof}

\bibliographystyle{plain}

\begin{thebibliography}{10}

\bibitem{allard:first-variation}
W.~Allard.
\newblock On the first variation of a varifold.
\newblock {\em Annals of Mathematics}, 95:417--491, 1972.

\bibitem{allard:boundary}
W.~Allard.
\newblock On the first variation of a varifold: boundary behavior.
\newblock {\em Annals of Mathematics}, 101:418--446, 1975.

\bibitem{brakke}
K.~Brakke.
\newblock {\em The motion of a surface by its mean curvature}.
\newblock Princeton University Press, 1978.

\bibitem{buckland}
J.~Buckland.
\newblock Mean curvature flow with free boundary on smooth hypersurfaces.
\newblock {\em J. reine angew. Math.}, 586:71--90, 2005.

\bibitem{me:convexity}
N.~Edelen.
\newblock Convexity estimates for mean curvature flow with free boundary.
\newblock {\em Advances in Mathematics}, 294:1--36, 2016.

\bibitem{giga-sato}
Y.~Giga and M.-H. Sato.
\newblock Neumann problem for singular degenerate parabolic equations.
\newblock 6:1217--1230, 1993.

\bibitem{gruter:minz-problems}
M.~Gruter.
\newblock Regularity results for minimizing currents with a free-boundary.
\newblock {\em J. reine angew. Math.}, pages 307--325, 1987.

\bibitem{gruter-jost:allard}
M.~Gruter and J.~Jost.
\newblock Allard type regularity results for varifolds with free boundaries.
\newblock {\em Annali della Scuola Normale Superiore di Pisa}, 13:129--169,
  1986.

\bibitem{huisken:monotonicity}
G.~Huisken.
\newblock Asymptotic behavior for singularities of the mean curvature flow.
\newblock {\em Journal of Differential Geometry}, 31:285--299, 1990.

\bibitem{ilmanen:elliptic-reg}
T.~Ilmanen.
\newblock {\em Elliptic Regularization and Partial Regularity for Motion by
  Mean Curvature}.
\newblock Memoirs of the American Mathematical Society, 1993.

\bibitem{kagaya}
T.~Kagaya.
\newblock Convergence of the allen-cahn equation with neumann boundary
  condition on non-convex domains.
\newblock 2017.
\newblock arXiv:1710.00526.

\bibitem{katso-koss-reitich}
M.~Katsoulakis, G.~Kossioris, and F.~Reitich.
\newblock Generalized motion by mean curvature with neumann conditions and the
  allen-cahn model for phase transitions.
\newblock {\em The Journal of Geometric Analysis}, 5:255--279.

\bibitem{koeller}
A.~Koeller.
\newblock On the singular set of mean curvature flows with neumann free
  boundary conditions.
\newblock 2010.
\newblock arXiv:1012.0601.

\bibitem{lambert:minkowski}
B.~Lambert.
\newblock The perpendicular neumann problem for mean curvature flow with a
  timelike cone boundary conditon.
\newblock {\em Trans. Amer. Math. Soc.}, 366:3373--3388, 2014.

\bibitem{lieberman:parabolic}
G.~Lieberman.
\newblock {\em Second Order Parabolic Differential Equations}.
\newblock World Scientific, 1996.

\bibitem{marquardt:thesis}
T.~Marquardt.
\newblock The inverse mean curvature flow for hypersurfaces with boundary.
\newblock thesis.

\bibitem{tonegawa:free-allen-cahn}
M.~Mizuno and Y.~Tonegawa.
\newblock Convergence of the allen-cahn equation with neumann boundary
  conditions.
\newblock {\em SIAM Journal on Mathematical Analysis}, 47:1906--1932, 2015.

\bibitem{sato}
M.-H. Sato.
\newblock Interface evolution with neumann boundary condition.
\newblock {\em Adv. Math. Sci. Appl.}, 4:249--264, 1994.

\bibitem{stahl:singularity}
A.~Stahl.
\newblock Convergence of solutions to the mean curvauture flow with a neumann
  boundary condition.
\newblock {\em Calc. Variations \& PDE}, 4:421--441, 1996.

\bibitem{stahl:regularity}
A.~Stahl.
\newblock Regularity estimates for solutions to the mean curvature flow with a
  neumann boundary condition.
\newblock {\em Calc. Variations \& PDE}, 4:385--407, 1996.

\bibitem{volkmann:thesis}
A.~Volkmann.
\newblock Free boundary problems governed by mean curvature.
\newblock thesis.

\bibitem{wheeler}
V.~Wheeler.
\newblock Non-parametric radially symmetric mean curvature flow with a free
  boundary.
\newblock {\em Math. Z.}, 276:281--298, 2014.

\bibitem{white:local-reg}
B.~White.
\newblock A local regularity theorem for mean curvature flow.
\newblock {\em Annals of Mathematics}, 161:1487--1519, 2005.

\bibitem{white:varifold-maximum}
B.~White.
\newblock The maximum principle for minimal varieties of arbitrary codimension.
\newblock {\em Communications in Analysis and Geometry}, 18, 2009.

\end{thebibliography}

\end{document}